\documentclass[a4paper,twoside]{article}
\usepackage{a4}
\usepackage{amssymb}
\usepackage{amsmath}
\usepackage{upref}
\usepackage[colorlinks,citecolor=blue,linkcolor=blue]{hyperref}
\usepackage[dvipsnames]{color}
\usepackage[active]{srcltx}
\allowdisplaybreaks[2] 
%
%
%
\newcount\minutes \newcount\hours
\hours=\time \divide\hours 60 \minutes=\hours
 \multiply\minutes-60
\advance\minutes \time
\newcommand{\klockan}{\the\hours:{\ifnum\minutes<10 0\fi}\the\minutes}
\newcommand{\tid}{\today\ \klockan}
\newcommand{\prtid}{\smash{\raise 10mm \hbox{\LaTeX ed \tid}}}
\renewcommand{\prtid}{}
%
%
\makeatletter \pagestyle{headings} \headheight 10pt
\def\sectionmark#1{} 
\def\subsectionmark#1{}
\newcommand{\sectnr}{\ifnum \c@secnumdepth >\z@
                 \thesection.\hskip 1em\relax \fi}
\def\@evenhead{\footnotesize\rm\thepage\hfil\leftmark\hfil\llap{\prtid}}
\def\@oddhead{\footnotesize\rm\rlap{\prtid}\hfil\rightmark\hfil\thepage}
\def\tableofcontents{\section*{Contents} 
 \@starttoc{toc}}
\makeatother
%
%
\makeatletter
\def\@biblabel#1{#1.}
\makeatother
%
%
%
\makeatletter
\let\Thebibliography=\thebibliography
\renewcommand{\thebibliography}[1]{\def\@mkboth##1##2{}\Thebibliography{#1}
\addcontentsline{toc}{section}{References}
\frenchspacing 
\setlength{\@topsep}{0pt}
\setlength{\itemsep}{0pt}%
\setlength{\parskip}{0pt plus 2pt}%
} \makeatother
%
%
\makeatletter
\def\mdots@{\mathinner.\nonscript\!.%
 \ifx\next,.\else\ifx\next;.\else\ifx\next..\else
 \nonscript\!\mathinner.\fi\fi\fi}
\let\ldots\mdots@
\let\cdots\mdots@
\let\dotso\mdots@
\let\dotsb\mdots@
\let\dotsm\mdots@
\let\dotsc\mdots@
\def\vdots{\vbox{\baselineskip2.8\p@ \lineskiplimit\z@
    \kern6\p@\hbox{.}\hbox{.}\hbox{.}\kern3\p@}}
\def\ddots{\mathinner{\mkern1mu\raise8.6\p@\vbox{\kern7\p@\hbox{.}}%
    \raise5.8\p@\hbox{.}\raise3\p@\hbox{.}\mkern1mu}}
\makeatother
%
%
\makeatletter
\let\Enumerate=\enumerate
\renewcommand{\enumerate}{\Enumerate%
\setlength{\@topsep}{0pt}
\setlength{\itemsep}{0pt}%
\setlength{\parskip}{0pt plus 1pt}%
\renewcommand{\theenumi}{\textup{(\alph{enumi})}}%
\renewcommand{\labelenumi}{\theenumi}%
}
\let\endEnumerate=\endenumerate
\renewcommand{\endenumerate}{\endEnumerate\ifhmode\unskip\fi}
\makeatother
%
%
\makeatletter
\def\@seccntformat#1{\csname the#1\endcsname.\quad}
\makeatother
%
%
\newcommand{\authortitle}[2]{\author{#1}\title{#2}\markboth{#1}{#2}}
%
%
\newcommand{\art}[6]{{\sc #1, \rm #2, \it #3 \bf #4 \rm (#5), \mbox{#6}.}}
\newcommand{\auth}[2]{{#1, #2.}}
\newcommand{\artprep}[3]{{\sc #1, \rm #2, #3.}}
\newcommand{\arttoappear}[3]{{\sc #1, \rm #2, to appear in \it #3}}
\newcommand{\book}[3]{{\sc #1, \it #2, \rm #3.}}
\newcommand{\AND}{{\rm and }}
%
%

%
%
\RequirePackage{amsthm}
\newtheoremstyle{descriptive}%
  {\topsep}   
  {\topsep}   
  {\rmfamily} 
  {}          
  {\bfseries} 
  {.}         
  { }         
  {}          
\newtheoremstyle{propositional}%
  {\topsep}   
  {\topsep}   
  {\itshape}  
  {}          
  {\bfseries} 
  {.}         
  { }         
  {}          
\theoremstyle{propositional}
\newtheorem{thm}{Theorem}[section]
\newtheorem{prop}[thm]{Proposition}
\newtheorem{lem}[thm]{Lemma}
\newtheorem{lemma}[thm]{Lemma} 
\newtheorem{cor}[thm]{Corollary}
\theoremstyle{descriptive}
\newtheorem{deff}[thm]{Definition}
\newtheorem{example}[thm]{Example}
\newtheorem{remark}[thm]{Remark}
\newtheorem{openprob}[thm]{Open problem}
%
%
%
%
%
\makeatletter
\renewenvironment{proof}[1][\proofname]{\par
  \pushQED{\qed}%
  \normalfont
  \trivlist
  \item[\hskip\labelsep
        \itshape
    #1\@addpunct{.}]\ignorespaces
}{%
  \popQED\endtrivlist\@endpefalse
} \makeatother
%
%
\newcommand{\setm}{\setminus}
\renewcommand{\emptyset}{\varnothing}
%
%
%
%
%
\def\vint{\mathop{\mathchoice%
          {\setbox0\hbox{$\displaystyle\intop$}\kern 0.22\wd0%
           \vcenter{\hrule width 0.6\wd0}\kern -0.82\wd0}%
          {\setbox0\hbox{$\textstyle\intop$}\kern 0.2\wd0%
           \vcenter{\hrule width 0.6\wd0}\kern -0.8\wd0}%
          {\setbox0\hbox{$\scriptstyle\intop$}\kern 0.2\wd0%
           \vcenter{\hrule width 0.6\wd0}\kern -0.8\wd0}%
          {\setbox0\hbox{$\scriptscriptstyle\intop$}\kern 0.2\wd0%
           \vcenter{\hrule width 0.6\wd0}\kern -0.8\wd0}}%
          \mathopen{}\int}
%
%
%
%
\newcommand{\Cp}{{C_p}}
\DeclareMathOperator{\capp}{cap}
\newcommand{\cp}{\capp_p}
\newcommand{\grad}{\nabla}
\DeclareMathOperator{\diam}{diam} 
\DeclareMathOperator{\spt}{supp}

 \newcommand{\loc}{_{\rm loc}}
{\catcode`p =12 \catcode`t =12 \gdef\eeaa#1pt{#1}}      
\def\accentadjtext#1{\setbox0\hbox{$#1$}\kern   
                \expandafter\eeaa\the\fontdimen1\textfont1 \ht0 }
\def\accentadjscript#1{\setbox0\hbox{$#1$}\kern 
                \expandafter\eeaa\the\fontdimen1\scriptfont1 \ht0 }
\def\accentadjscriptscript#1{\setbox0\hbox{$#1$}\kern   
                \expandafter\eeaa\the\fontdimen1\scriptscriptfont1 \ht0 }
\def\accentadjtextback#1{\setbox0\hbox{$#1$}\kern       
                -\expandafter\eeaa\the\fontdimen1\textfont1 \ht0 }
\def\accentadjscriptback#1{\setbox0\hbox{$#1$}\kern     
                -\expandafter\eeaa\the\fontdimen1\scriptfont1 \ht0 }
\def\accentadjscriptscriptback#1{\setbox0\hbox{$#1$}\kern 
                -\expandafter\eeaa\the\fontdimen1\scriptscriptfont1 \ht0 }
\def\itoverline#1{{\mathsurround0pt\mathchoice
        {\rlap{$\accentadjtext{\displaystyle #1}
                \accentadjtext{\vrule height1.593pt}
                \overline{\phantom{\displaystyle #1}
                \accentadjtextback{\displaystyle #1}}$}{#1}}
        {\rlap{$\accentadjtext{\textstyle #1}
                \accentadjtext{\vrule height1.593pt}
                \overline{\phantom{\textstyle #1}
                \accentadjtextback{\textstyle #1}}$}{#1}}
        {\rlap{$\accentadjscript{\scriptstyle #1}
                \accentadjscript{\vrule height1.593pt}
                \overline{\phantom{\scriptstyle #1}
                \accentadjscriptback{\scriptstyle #1}}$}{#1}}
        {\rlap{$\accentadjscriptscript{\scriptscriptstyle #1}
                \accentadjscriptscript{\vrule height1.593pt}
                \overline{\phantom{\scriptscriptstyle #1}
                \accentadjscriptscriptback{\scriptscriptstyle #1}}$}{#1}}}}
%
%
\newcommand{\dmu}{d\mu}
\newcommand{\ga}{\gamma}
\newcommand{\gat}{\tilde{\gamma}}
\newcommand{\Om}{\Omega}
\newcommand{\Ga}{\Gamma}
\renewcommand{\phi}{\varphi}
\newcommand{\p}{{$p\mspace{1mu}$}}
\newcommand{\R}{\mathbf{R}}
\newcommand{\Q}{\mathbf{Q}}
\newcommand{\eR}{{\overline{\R}}}
\newcommand{\gt}{{\tilde{g}}}
\newcommand{\ut}{{\tilde{u}}}
\newcommand{\B}{{\cal B}}
\newcommand{\Vt}{{\widetilde{V}}}
\newcommand{\Ut}{{\widetilde{U}}}
\newcommand{\Bt}{{\widetilde{\B}}}
%
%
%
%
%
%
%
\newcommand{\limplus}{{\mathchoice{\vcenter{\hbox{$\scriptstyle +$}}}
  {\vcenter{\hbox{$\scriptstyle +$}}}
  {\vcenter{\hbox{$\scriptscriptstyle +$}}}
  {\vcenter{\hbox{$\scriptscriptstyle +$}}}
}}
%
%
\newcommand{\Np}{N^{1,p}}
\newcommand{\Nploc}{N^{1,p}\loc}
\newcommand{\Wploc}{W^{1,p}\loc}
\newcommand{\Npploc}{N^{1,p}_{\textup{fine-loc}}}
\newcommand{\Npfloc}{N^{1,p}_{\textup{f-loc}}}
\newcommand{\Npqloc}{N^{1,p}_{\textup{quasi-loc}}}
\newcommand{\Dp}{D^p}
\newcommand{\Dploc}{D^{p}\loc}
\newcommand{\Dpploc}{D^{p}_{\textup{fine-loc}}}
\newcommand{\Dpqloc}{D^{p}_{\textup{quasi-loc}}}
\newcommand{\Lploc}{L^{p}\loc}
\newcommand{\Lpploc}{L^{p}_{\textup{fine-loc}}}
\newcommand{\Lpqloc}{L^{p}_{\textup{quasi-loc}}}
\newcommand{\Wp}{W^{1,p}}
%
%
\numberwithin{equation}{section}
\newcommand{\hNp}{\widehat{N}^{1,p}}
\DeclareMathOperator{\Mod}{Mod}
\newcommand{\Modp}{{\Mod_p}}
\newcommand{\be}{\beta}

\newcommand{\eqv}{\ensuremath{
\mathchoice{\quad \Longleftrightarrow \quad}{\Leftrightarrow}
                {\Leftrightarrow}{\Leftrightarrow}} }
\newcommand{\imp}{\ensuremath{\Rightarrow} }

\newenvironment{ack}{\medskip{\it Acknowledgement.}}{}

\begin{document}

\authortitle{Anders Bj\"orn, Jana Bj\"orn and Visa Latvala}
{Sobolev spaces,  fine gradients and quasicontinuity on quasiopen sets}
\title{Sobolev spaces,  fine gradients and quasicontinuity on quasiopen sets
in $\R^n$ and metric spaces}
\author{
Anders Bj\"orn \\
\it\small Department of Mathematics, Link\"oping University, \\
\it\small SE-581 83 Link\"oping, Sweden\/{\rm ;}
\it \small anders.bjorn@liu.se
\\
\\
Jana Bj\"orn \\
\it\small Department of Mathematics, Link\"oping University, \\
\it\small SE-581 83 Link\"oping, Sweden\/{\rm ;}
\it \small jana.bjorn@liu.se
\\
\\
Visa Latvala \\
\it\small Department of Physics and Mathematics,
University of Eastern Finland, \\
\it\small P.O. Box 111, FI-80101 Joensuu,
Finland\/{\rm ;}
\it \small visa.latvala@uef.fi
}

\date{}

\maketitle

\noindent{\small
 {\bf Abstract}.
We study different definitions of Sobolev spaces on quasiopen sets 
in a complete metric space $X$ equipped with a doubling measure
supporting a \p-Poincar\'e inequality with $1<p<\infty$,
and connect them to the Sobolev theory in $\R^n$.
In particular, we show that for quasiopen subsets of $\R^n$ 
the Newtonian functions, which are 
naturally defined in any metric space,
coincide with the quasicontinuous representatives of the Sobolev functions 
studied by Kilpel\"ainen and Mal\'y in 1992. 
As a by-product, we establish the quasi-Lindel\"of principle of the 
fine topology in metric spaces 
and study several variants of local Newtonian and Dirichlet spaces 
on quasiopen sets. }

\bigskip

\noindent {\small \emph{Key words and phrases}:}
Dirichlet space,
fine gradient,
fine topology,
metric space,
minimal upper gradient,
Newtonian space,
quasicontinuous,
quasi-Lindel\"of principle,
quasiopen,
Sobolev space.

\medskip

\noindent {\small Mathematics Subject Classification (2010):
Primary: 46E35; Secondary: 30L99, 31C40, 31C45, 31E05, 35J92.}

\section{Introduction}

We study different
definitions of Sobolev functions on nonopen sets in metric spaces.
Even in $\R^n$, it is not obvious how to define Sobolev spaces
on nonopen subsets, but fruitful definitions have been given
on quasiopen sets~$U$,
i.e.\ on sets which differ from open sets by sets of arbitrarily small
capacity.
Kilpel\"ainen--Mal\'y~\cite{KiMa92} gave the first definition of $\Wp(U)$
in 1992 by means of quasicovering patches of global Sobolev functions.
They also defined a generalized so-called fine gradient for
functions in $\Wp(U)$.
More recently, Sobolev spaces, and in particular the
so-called Newtonian spaces,
have been studied on metric spaces.
Thus by considering $U$ as a metric space in its own right
(and forgetting the ambient space) we get another candidate,
the Newtonian space $\Np(U)$.

The purpose of this paper is to show that the theory of Sobolev functions 
nicely
extends to quasiopen sets in the 
metric setting. In particular, we show that for quasiopen sets $U\subset\R^n$,  the two spaces $\Wp(U)$ and $\Np(U)$ coincide, with equal norms.
To be precise, the functions in $\Np(U)$ are more exactly defined than a.e.,
and we have the following result.

\begin{thm} \label{thm-intro-equiv-Sob}
Let $U \subset \R^n$ be quasiopen,
and let $u:U \to \eR:=[-\infty,\infty]$ be an everywhere defined function.
Then $u \in \Np(U)$ if and only if $u \in \Wp(U)$ and $u$ is quasicontinuous.
Moreover, in this case $\|u\|_{\Np(U)}=\|u\|_{\Wp(U)}$.
\end{thm}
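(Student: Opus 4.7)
The plan is to reduce Theorem~\ref{thm-intro-equiv-Sob} to the known equivalence on open subsets of $\R^n$ and then glue the local descriptions using the quasiopen structure of $U$. On any open set $V \subset \R^n$, Shanmugalingam's identification of Newtonian and (quasicontinuous) Sobolev functions gives $\Np(V) = \{u \in \Wp(V) : u \text{ is quasicontinuous}\}$ with equal norms, the minimal $p$-weak upper gradient coinciding a.e.\ with $|\grad u|$. Since $U$ is quasiopen, there exist open sets $G_k \subset \R^n$ with $U\cup G_k$ open and $\cp(G_k)\to 0$, so that up to a set of $p$-capacity zero, $U$ is covered by the open neighbourhoods $V_k := U\cup G_k$, in each of which $V_k\setminus U\subset G_k$ has small capacity.

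Forward direction: assume $u \in \Np(U)$. Quasicontinuity of $u$ on $U$ is automatic from the general quasicontinuity theorem for $\Np$ on metric spaces with a doubling measure and a $p$-Poincar\'e inequality, applied to $U$ endowed with the restricted measure. To exhibit $u$ as an element of $\Wp(U)$ in the Kilpel\"ainen--Mal\'y sense, one constructs a quasi-covering patch using the open sets $V_k$. Specifically, using the quasicontinuous representative of $u$ and the smallness of $\cp(V_k\setminus U)$, one obtains $u_k \in \Np(V_k)$ agreeing with $u$ quasi-everywhere on $U$; the baseline then places $u_k \in \Wp(V_k)$ with $|\grad u_k|=g_u$ a.e.\ on $U$, yielding the required patch.

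Backward direction: assume $u \in \Wp(U)$ is quasicontinuous. A Kilpel\"ainen--Mal\'y quasi-covering supplies open sets $V_k$ and $u_k \in \Wp(V_k)$ agreeing with $u$ on $U \cap V_k$. By the baseline, each $u_k$ has a quasicontinuous representative $\ut_k \in \Np(V_k)$ with minimal upper gradient $|\grad u_k|$. Since both $u$ and $\ut_k$ are quasicontinuous, they agree quasi-everywhere on $U \cap V_k$, and hence the KM fine gradient of $u$ equals $|\grad u_k|$ quasi-everywhere there, giving a well-defined global candidate $g$ for an upper gradient on $U$ with $\|g\|_{L^p(U)}=\|u\|_{\Wp(U)}-\|u\|_{L^p(U)}$. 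It remains to verify that $g$ is a $p$-weak upper gradient of $u$ on $U$: for $p$-almost every curve $\gamma$ in $U$, one argues that $\gamma$ avoids the capacity-zero set $U \setminus \bigcup_k V_k$ and decomposes into subcurves lying in individual $V_k$'s, on each of which the upper gradient inequality for $\ut_k$ applies and then sums.

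The main obstacle is the passage from local Sobolev behaviour on the open patches $V_k$ to an upper gradient along $p$-almost every curve \emph{in $U$} in the backward direction. Curves parametrised into $U$ are a priori not controlled by curves in the ambient $\R^n$, and one needs the quasi-Lindel\"of principle for the fine topology (established as a by-product of this paper) together with the vanishing of $p$-modulus for curve families meeting sets of $p$-capacity zero. These together provide, outside an exceptional family of zero $p$-modulus, a subdivision of each curve adapted to the patches $V_k$ and ensure that the fine exceptional set $U\setminus\bigcup_k V_k$ does not spoil the upper gradient inequality. The norm identity $\|u\|_{\Np(U)} = \|u\|_{\Wp(U)}$ then follows because $g_u$ and the fine gradient coincide a.e.\ on $U$.
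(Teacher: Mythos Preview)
Your backward direction is roughly the paper's argument (it is essentially Theorems~\ref{thm-minimality} and~\ref{thm-Wp=Np}), though note that the sets in a Kilpel\"ainen--Mal\'y quasicovering are quasiopen, not open; only the ambient sets $G_V\supset V$ are open. The forward direction, however, has two genuine gaps.

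First, quasicontinuity of $u\in\Np(U)$ is \emph{not} automatic. You invoke the quasicontinuity theorem of Bj\"orn--Bj\"orn--Shanmugalingam by treating $(U,d|_U,\mu|_U)$ as the ambient metric space, but that theorem requires the ambient space to be complete with a doubling measure supporting a \p-Poincar\'e inequality. A quasiopen $U\subset\R^n$ typically fails all of this: it can have empty Euclidean interior, the restricted measure need not be doubling, and there is no reason for a Poincar\'e inequality to survive. Establishing quasicontinuity on quasiopen sets is precisely Theorem~\ref{thm-Nploc-intro} of this paper, and its proof uses the weak Cartan property and the quasi-Lindel\"of principle in an essential way; it cannot be read off from the open case.

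Second, your extension step is unjustified. You assert that ``using the quasicontinuous representative of $u$ and the smallness of $\Cp(V_k\setminus U)$, one obtains $u_k\in\Np(V_k)$ agreeing with $u$ quasi\-everywhere on $U$''. But $V_k\setminus U\subset G_k$ has \emph{small}, not zero, capacity, so you cannot simply extend $u$ across it and land in $\Np(V_k)$. The paper does something quite different (Proposition~\ref{prop-ex-fine-u-g}): it produces a \p-strict quasicovering of $U$ by finely open $V$ together with compactly supported cut-offs $v\in\Np_0(U)$ equal to $1$ on $V$, and then shows $uv\in\Np_0(U)\subset\Np(\R^n)$ after first truncating $u$ so that it is bounded on the relevant patch. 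This multiplication-by-cutoff construction, not an extension across $G_k$, is what places $u$ in $\Wp(U)$. Also, your norm identity $\|g\|_{L^p(U)}=\|u\|_{\Wp(U)}-\|u\|_{L^p(U)}$ is incorrect as written; the norms combine as $p$-th powers.
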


On open sets in $\R^n$, the equality between the Newtonian 
and Sobolev spaces was proved by Shanmugalingam~\cite{Sh-rev}.
The proof of Theorem~\ref{thm-intro-equiv-Sob}
is quite involved, and we will need most of
the results in this paper to deduce it.
We will also use several results related to the fine topology
from our earlier papers \cite{BBnonopen}--\cite{BBL2}.

In $\R^n$ (and in the setting of this paper), quasiopen sets appear whenever truncations of Sobolev functions are considered. This is so because quasiopen sets coincide with the superlevel sets of suitable (quasicontinuous) representatives of the global Sobolev functions,
see Fuglede~\cite{Fugl71},
Kilpel\"ainen--Mal\'y~\cite{KiMa92}
and Proposition~\ref{prop-quasiopen-char} below. 
On the other hand, it is natural to study Sobolev spaces on quasiopen sets,
as these sets have enough structure to carry reasonable families
of Sobolev functions, and in particular of test functions.
This is important for studying partial differential equations and variational
problems on such sets, see~\cite{KiMa92}.

The metric space approach to Sobolev functions makes it in principle possible
to consider Sobolev spaces and variational problems on arbitrary sets,
but it turns out that there is not much point in considering more general sets
than the quasiopen ones.
More precisely, in  Bj\"orn--Bj\"orn~\cite{BBnonopen} it was shown
that the Dirichlet problem
for \p-harmonic functions in an arbitrary set coincides with the one
in the set's fine interior.
Moreover, the spaces of Sobolev test functions with zero boundary values
are the same for the set and its fine interior.
Even in the metric setting of this paper, 
finely open sets are quasiopen and
quasiopen sets differ from finely open
sets only by sets of capacity zero~\cite{BBL2}.
The results in~\cite{BBnonopen} also show that restrictions
of (upper) gradients from the underlying space (such as $\R^n$) behave
well on quasiopen sets, but not on more general sets.

Quasiopen sets can also be regarded as a link between Sobolev 
spaces and potential theory, 
in which finely open sets play an
important role.
Finely open sets form the fine topology, which is the coarsest topology
making all superharmonic functions continuous~\cite{BBL1}
and serves as a tool for many deep properties in potential theory.
Finely open, and thus quasiopen, sets can be very different from the usual open sets.
The simplest examples of quasiopen sets are open sets with an arbitrary
set of zero capacity removed or added.
Such a removed set can be dense, causing the interior of the resulting
set to be empty. From the point of view of potential theory, such a set behaves like the
original open one. 

A typical finely open set is the complement of the Lebesgue spine in $\R^3$
with the tip of the spine added to it.
This is natural, since for harmonic and superharmonic functions,
the tip behaves more like an interior point than a boundary point.
More generally, finely open sets contain points which have only a ``thin''
connection with the complement of the set. 
Examples~9.5 and~9.6 in~\cite{BBnonopen} describe a closed set $E\subset\R^n$
with positive Lebesgue measure and empty Euclidean interior, but whose fine
interior has full measure in $E$ and is thus suitable for solving the
Dirichlet problem on it.
The set looks like a Swiss cheese and its complement consists of countably
many balls of shrinking radii.
We refer to 
the introductions in \cite{BBnonopen}--\cite{BBL2},
and
the references therein, for discussions on the fine topology and the history of 
the (fine) nonlinear potential theory.

In this paper we assume that $X$ is a complete metric
space equipped with a doubling measure
supporting a \p-Poincar\'e inequality,  $1<p<\infty$.
As hinted before Theorem~\ref{thm-intro-equiv-Sob},
Newtonian functions are better representatives than the usual
Sobolev functions. Namely, it was shown by
Bj\"orn--Bj\"orn--Shan\-mu\-ga\-lin\-gam~\cite{BBS5},
that all Newtonian functions on open sets are quasicontinuous.
Moreover, they are finely continuous outside of sets of zero capacity, by
J.~Bj\"orn~\cite{JB-pfine} or (independently) Korte~\cite{korte08}.
We extend both these results to quasiopen sets in Section~\ref{sect-quasicontinuity}:

\begin{thm} \label{thm-Nploc-intro}
Let $U\subset X$ be quasiopen, and $u \in \Np(U)$. Then $u$ is quasicontinuous in $U$
and finely continuous quasieverywhere in $U$.
\end{thm}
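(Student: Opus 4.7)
My plan is to reduce both statements to their known analogues on open sets: quasicontinuity from Bj\"orn--Bj\"orn--Shan\-mu\-ga\-lin\-gam~\cite{BBS5} and fine continuity quasi-everywhere from J.~Bj\"orn~\cite{JB-pfine} or Korte~\cite{korte08}. Using the defining property of quasiopen sets, for each $k\in\mathbf{N}$ I would fix an open $G_k\subset X$ with $\cp(G_k)<2^{-k}$ such that $V_k:=U\cup G_k$ is open in $X$. The central technical step is then to produce an extension $u_k\in\Np(V_k)$ that agrees with $u$ on $U$; once this is in hand the conclusion is essentially a capacity bookkeeping exercise.

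Granted such extensions, quasicontinuity follows quickly. Given $\varepsilon>0$, pick $k$ with $2^{-k}<\varepsilon/2$ and apply \cite{BBS5} to $u_k$ on the open set $V_k$ to obtain an open $H\subset V_k$ with $\cp(H)<\varepsilon/2$ on whose complement in $V_k$ the function $u_k$ is continuous. Then $G_k\cup H$ is open in $X$, has capacity less than $\varepsilon$, and $u=u_k$ is continuous on $U\setminus(G_k\cup H)$. For fine continuity quasi-everywhere, \cite{JB-pfine,korte08} applied to $u_k\in\Np(V_k)$ yields a set $E\subset V_k$ of capacity zero outside which $u_k$ is finely continuous. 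Using that by \cite{BBL2} the quasiopen set $U$ agrees with a finely open subset of $X$ up to a set of capacity zero, fine continuity of $u_k$ at points of $U\setminus E$ translates into fine continuity of $u$ at quasi-every point of $U$.

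The main obstacle is the extension step. The naive definition $u_k=u$ on $U$ and $u_k=0$ on $G_k\setminus U$ typically fails to yield a Newtonian function on $V_k$, since $p$-almost every curve in $V_k$ may cross $\partial U\cap V_k$ and no finite $p$-weak upper gradient on $V_k$ can control the resulting jumps of $u_k$. To handle this I would exploit simultaneously the smallness of $\cp(G_k)$ and the fact, proved in \cite{BBL2}, that a quasiopen set is finely open up to capacity zero: at quasi-every point of $\partial U\cap V_k$ the set $U$ is fine-topologically thin, so a finely continuous cut-off which equals $1$ on most of $U$ and vanishes in a capacity-small neighborhood of $\partial U\cap V_k$, combined with the good behavior of restrictions of upper gradients to quasiopen sets from \cite{BBnonopen}, should produce the required $u_k\in\Np(V_k)$. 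An alternative, which I would also try in case the direct extension proves unwieldy, is to approximate $u$ in $\Np(U)$-norm by restrictions of $\Np(X)$-functions, apply \cite{BBS5} and \cite{JB-pfine,korte08} to each approximant on $X$, and pass to a quasi-uniformly convergent subsequence via the capacitary weak-type inequality and a Borel--Cantelli argument.
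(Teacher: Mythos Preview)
Your plan hinges on the extension $u_k\in\Np(V_k)$ agreeing with $u$ on $U$, and neither route you sketch closes this gap. Approach (b) assumes that restrictions of $\Np(X)$-functions are dense in $\Np(U)$; for quasiopen $U$ this is not known a priori, and the paper's nearest result in that direction (Proposition~\ref{prop-ex-fine-u-g}) explicitly relies on the quasicontinuity you are trying to prove. Approach (a) has a usable kernel, but the reasoning you give is wrong: you claim that $U$ is thin at q.e.\ point of $\partial U\cap V_k$, yet if $U$ has empty Euclidean interior---which is typical for quasiopen sets---then $U\subset\partial U$, and a set that is finely open up to capacity zero is \emph{not} thin at q.e.\ one of its own points. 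The cut-off one actually wants is a capacity test function $\psi_k\in\Np(X)$ with $\psi_k=1$ on $G_k$; then $u_k:=(1-\psi_k)u$, extended by $0$, lies in $\Np(V_k)$ after gluing via the \p-path openness of $U$, but $u_k=u$ only where $\psi_k=0$, so this is an approximation rather than the promised extension, and the product rule requires $u$ bounded, forcing a preliminary truncation together with Lemma~\ref{lem-finite-qe}.

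The paper bypasses the extension problem by localizing rather than globalizing. Using Lemma~\ref{lem-finely-open-x} and the quasi-Lindel\"of principle (Theorem~\ref{thm-quasiLindelof}) it produces a \p-strict quasicovering of $U$ (Proposition~\ref{prop-strict-quasicover}); on each finely open piece it multiplies a truncation of $u$ by the associated cut-off $v\in\Np_0(U)$ to obtain a function in $\Np_0(U)\subset\Np(X)$, and then applies the global fine-continuity result on $X$ from~\cite{BBL1}. Quasicontinuity is deduced last, from fine continuity q.e., via~\cite{BBL2}. In effect the paper \emph{constructs} the $\Np(X)$-approximants that your approach (b) merely postulates.
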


This will be used as an important tool when establishing
Theorem~\ref{thm-intro-equiv-Sob} in Section~\ref{sect-Sobolev}.
Another tool that we will need for proving  both theorems above is the quasi-Lindel\"of principle of the fine topology, which we obtain in
Section~\ref{sect-fine-cont}.
Along the way, in Section~\ref{sect-Newtonian}, 
we introduce  and study several variants of local Newtonian
spaces on quasiopen sets, some of them inspired by the earlier definitions 
in $\R^n$ given by Kilpel\"ainen--Mal\'y~\cite{KiMa92} and 
Mal\'y--Ziemer~\cite{MZ}.
In fact, Theorem~\ref{thm-Nploc-intro} holds for these local
spaces as well, see Theorem~\ref{thm-Nploc}.
They can be used as natural spaces for considering
\p-harmonic and superharmonic functions on quasiopen sets in metric
spaces, as in~\cite{KiMa92}, even though
we will not pursue this line of research here.
These local spaces are also suitable for defining the so-called
\p-fine upper gradients which have been tailored for the proof of
Theorem~\ref{thm-intro-equiv-Sob} but turn out to be a useful generalization
of the minimal \p-weak upper gradients as well,
see Theorems~\ref{thm-minimality} and~\ref{thm-Np-iff-gt-Lp}.

\begin{ack}
The  first two authors were supported by the Swedish Research Council.
Part of this research was done during several visits
of the third author to Link\"oping University in
2012--2015 and
while all three authors
visited Institut Mittag-Leffler in the autumn of 2013.
We thank both institutions for their hospitality
and support.
\end{ack}

\section{Notation and preliminaries}
\label{sect-prelim}

\emph{In Sections~\ref{sect-prelim} and~\ref{sect-quasiopen}
we assume that $1 \le p <\infty$, while in later sections
we will assume that $1<p<\infty$.
Starting from Section~\ref{sect-fine-cont} we will also assume
that $X$ is complete and  supports a \p-Poincar\'e inequality, and that
$\mu$ is doubling, see the definitions below.}

\medskip

We assume throughout the paper
that $X=(X,d,\mu)$ is a metric space equipped
with a metric $d$ and a positive complete  Borel  measure $\mu$
such that $0<\mu(B)<\infty$ for all (open) balls $B \subset X$.
It follows that $X$ is separable,
see Proposition~1.6 in Bj\"orn--Bj\"orn~\cite{BBbook}.
The $\sigma$-algebra on which $\mu$ is defined
is obtained by the completion of the Borel $\sigma$-algebra.
We also assume that $\Om \subset X$ is a nonempty open
set.

We say that $\mu$  is \emph{doubling} if
there exists $C>0$ such that for all balls
$B=B(x_0,r):=\{x\in X: d(x,x_0)<r\}$ in~$X$,
\begin{equation*}
        0 < \mu(2B) \le C \mu(B) < \infty.
\end{equation*}
Here and elsewhere we let $\lambda B=B(x_0,\lambda r)$.
A metric space with a doubling measure is proper\/
\textup{(}i.e.\ closed and bounded subsets are compact\/\textup{)}
if and only if it is complete.

A \emph{curve} is a continuous mapping from an interval,
and a \emph{rectifiable} curve is a curve with finite length.
We will only consider curves which are nonconstant, compact and rectifiable.
A curve can thus be parameterized by its arc length $ds$.
We follow Heinonen and Koskela~\cite{HeKo98} in introducing
upper gradients as follows (they called them very weak gradients).

\begin{deff} \label{deff-ug}
A nonnegative Borel function $g$ on $X$ is an \emph{upper gradient}
of an extended real-valued function $f$
on $X$ if for all nonconstant, compact and rectifiable curves
$\gamma: [0,l_{\gamma}] \to X$,
\begin{equation} \label{ug-cond}
        |f(\gamma(0)) - f(\gamma(l_{\gamma}))| \le \int_{\gamma} g\,ds,
\end{equation}
where we follow the convention that the left-hand side is $\infty$
whenever at least one of the
terms therein is infinite.

If $g$ is a nonnegative measurable function on $X$
and if (\ref{ug-cond}) holds for \p-almost every curve (see below),
then $g$ is a \emph{\p-weak upper gradient} of~$f$.
\end{deff}

Here we say that a property holds for \emph{\p-almost every curve}
if it fails only for a curve family $\Ga$ with zero \p-modulus,
i.e.\ there exists $0\le\rho\in L^p(X)$ such that
$\int_\ga \rho\,ds=\infty$ for every curve $\ga\in\Ga$.
Note that a \p-weak upper gradient need not be a Borel function,
it is only required to be measurable.
On the other hand,
every measurable function $g$ can be modified on a set of measure zero
to obtain a Borel function, from which it follows that
$\int_{\gamma} g\,ds$ is defined (with a value in $[0,\infty]$) for
\p-almost every curve $\ga$.
For proofs of these and all other facts in this section
we refer to Bj\"orn--Bj\"orn~\cite{BBbook} and
Heinonen--Koskela--Shanmugalingam--Tyson~\cite{HKST}.

The \p-weak upper gradients were introduced in
Koskela--MacManus~\cite{KoMc}. It was also shown there
that if $g \in \Lploc(X)$ is a \p-weak upper gradient of $f$,
then one can find a sequence $\{g_j\}_{j=1}^\infty$
of upper gradients of $f$ such that $g_j-g \to 0$ in $L^p(X)$.
If $f$ has an upper gradient in $\Lploc(X)$, then
it has a \emph{minimal \p-weak upper gradient} $g_f \in \Lploc(X)$
in the sense that for every \p-weak upper gradient $g \in \Lploc(X)$ of $f$ we have
$g_f \le g$ a.e., see Shan\-mu\-ga\-lin\-gam~\cite{Sh-harm}.
The minimal \p-weak upper gradient is well defined
up to a set of measure zero in the cone of nonnegative functions in $\Lploc(X)$.
Following Shanmugalingam~\cite{Sh-rev},
we define a version of Sobolev spaces on the metric measure space $X$.

\begin{deff} \label{deff-Np}
Let for measurable $f$,
\[
        \|f\|_{\Np(X)} = \biggl( \int_X |f|^p \, \dmu
                + \inf_g  \int_X g^p \, \dmu \biggr)^{1/p},
\]
where the infimum is taken over all upper gradients of $f$.
The \emph{Newtonian space} on $X$ is
\[
        \Np (X) = \{f: \|f\|_{\Np(X)} <\infty \}.
\]
\end{deff}
\medskip

The space $\Np(X)/{\sim}$, where  $f \sim h$ if and only if $\|f-h\|_{\Np(X)}=0$,
is a Banach space and a lattice, see Shan\-mu\-ga\-lin\-gam~\cite{Sh-rev}.
We also define
\[
   \Dp(X)=\{f : f \text{ is measurable and  has an upper gradient
     in }   L^p(X)\}.
\]
In this paper we assume that functions in $\Np(X)$
and $\Dp(X)$
 are defined everywhere (with values in $\eR:=[-\infty,\infty]$),
not just up to an equivalence class in the corresponding function space.

For a measurable set $E\subset X$, the Newtonian space $\Np(E)$ is defined by
considering $(E,d|_E,\mu|_E)$ as a metric space in its own right.
We say  that $f \in \Nploc(E)$ if
for every $x \in E$ there exists a ball $B_x\ni x$ such that
$f \in \Np(B_x \cap E)$.
The spaces $\Dp(E)$ and $\Dploc(E)$ are defined similarly.
For a function $f \in \Dploc(E)$ we denote the minimal
\p-weak upper gradient of $f$ with respect to $E$ by $g_{f,E}$.

\begin{deff} \label{deff-sobcap}
The \emph{Sobolev capacity} of an arbitrary set $E\subset X$ is
\[
\Cp(E) = \inf_u\|u\|_{\Np(X)}^p,
\]
where the infimum is taken over all $u \in \Np(X)$ such that
$u\geq 1$ on $E$.
\end{deff}

The Sobolev capacity is countably subadditive.
We say that a property holds \emph{quasieverywhere} (q.e.)\
if the set of points  for which the property does not hold
has Sobolev capacity zero.
The Sobolev capacity is the correct gauge
for distinguishing between two Newtonian functions.
If $u \in \Np(X)$, then $u \sim v$ if and only if $u=v$ q.e.
Moreover, Corollary~3.3 in Shan\-mu\-ga\-lin\-gam~\cite{Sh-rev} shows that
if $u,v \in \Np(X)$ and $u= v$ a.e., then $u=v$ q.e.

Capacity is also important for the
following two notions, which are central in this paper.

\begin{deff}   \label{deff-q-cont}
A set $U\subset X$ is \emph{quasiopen} if for every
$\varepsilon>0$ there is an open set $G\subset X$ such that $\Cp(G)<\varepsilon$
and $G\cup U$ is open.

A function $u$ defined on a set  $E \subset X$ is \emph{quasicontinuous}
if for every $\varepsilon>0$ there is an open set $G\subset X$ 
such that $\Cp(G)<\varepsilon$ and $u|_{E \setm G}$ is finite and continuous.
\end{deff}

The quasiopen sets do not in general form a topology,
see Remark~9.1 in Bj\"orn--Bj\"orn~\cite{BBnonopen}.
However it follows easily from the countable subadditivity of $\Cp$
that countable unions and finite intersections of quasiopen sets are quasiopen.
(We consider finite sets to be countable
throughout the paper.)
For characterizations of quasiopen sets and quasicontinuous functions
see  Bj\"orn--Bj\"orn--Mal\'y~\cite{BBMaly} and 
Proposition~\ref{prop-quasiopen-char}.

Together with the doubling property defined above, the following
Poincar\'e inequality is often a standard assumption on metric spaces.

\begin{deff} \label{def.PI.}
We say that $X$ supports a \emph{\p-Poincar\'e inequality} if
there exist constants $C>0$ and $\lambda \ge 1$
such that for all balls $B \subset X$,
all integrable functions $f$ on $X$ and all upper gradients $g$ of $f$,
\begin{equation} \label{PI-ineq}
        \vint_{B} |f-f_B| \,\dmu
        \le C \diam(B) \biggl( \vint_{\lambda B} g^{p} \,\dmu \biggr)^{1/p},
\end{equation}
where $ f_B
 :=\vint_B f \,\dmu
:= \int_B f\, d\mu/\mu(B)$.
\end{deff}

In the definition of Poincar\'e inequality we can equivalently assume
that $g$ is a \p-weak upper gradient.
In $\R^n$ equipped with a doubling measure $d\mu=w\,dx$, where
$dx$ denotes Lebesgue measure, the \p-Poincar\'e inequality~\eqref{PI-ineq}
is equivalent to the \emph{\p-admissibility} of the weight $w$ in the
sense of Heinonen--Kilpel\"ainen--Martio~\cite{HeKiMa}, see
Corollary~20.9 in~\cite{HeKiMa}
and Proposition~A.17 in~\cite{BBbook}.

If $X$ is complete and  supports a \p-Poincar\'e inequality
and $\mu$ is doubling, then Lipschitz functions
are dense in $\Np(X)$, see Shan\-mu\-ga\-lin\-gam~\cite{Sh-rev}.
Moreover, all functions in $\Np(X)$
and those in $\Np(\Om)$ are quasicontinuous,
see Bj\"orn--Bj\"orn--Shan\-mu\-ga\-lin\-gam~\cite{BBS5}.
This means that in the Euclidean setting, $\Np(\R^n)$ is the
refined Sobolev space as defined in
Heinonen--Kilpel\"ainen--Martio~\cite[p.~96]{HeKiMa},
see \cite[Appendix~A.2]{BBbook}
for a proof of this fact valid in weighted $\R^n$.
This is the main reason why, unlike in the classical Euclidean setting,
we do not need to
require the functions competing in the
definitions of capacity to be $1$ in a
neighbourhood of $E$.
For recent related progress on the density of Lipschitz functions
see Ambrosio--Colombo--Di Marino~\cite{AmbCD} and
Ambrosio--Gigli--Savar\'e~\cite{AmbGS}.

In Section~\ref{sect-fine-cont} the fine topology is
defined by means of thin sets, which in turn use the
variational capacity $\cp$.
To be able to define the variational capacity we first
need a Newtonian space with zero boundary values.
We let, for an arbitrary set $A \subset X$,
\[
\Np_0(A)=\{f|_{A} : f \in \Np(X) \text{ and }
        f=0 \text{ on } X \setm A\}.
\]
One can replace the assumption ``$f=0$ on $X \setm A$''
with ``$f=0$ q.e.\ on $X \setm A$''
without changing the obtained space
$\Np_0(A)$.
Functions from $\Np_0(A)$ can be extended by zero in $X\setm A$ and we
will regard them in that sense if needed.

\begin{deff}
The \emph{variational
capacity} of $E\subset \Om$ with respect to $\Om$ is
\[
\cp(E,\Om) = \inf_u\int_X g_u^p\, d\mu,
\]
where the infimum is taken over all $u \in \Np_0(\Om)$
such that
$u\geq 1$ on $E$.
\end{deff}

\section{Quasiopen and \texorpdfstring{\p}{p}-path open sets}

\label{sect-quasiopen}

In this section we assume that $1 \le p <\infty$,
but $\mu$ is not required to be doubling
and, apart from Proposition~\ref{prop-Dp=infty}, no Poincar\'e inequality
is required.

Recall that quasiopen sets were defined in Definition~\ref{deff-q-cont}.
The following is another useful notion in connection with Sobolev
functions. 
It was introduced by Shanmugalingam~\cite{Sh-harm}.

\begin{deff}
A set $U \subset X$ is \emph{\p-path open} if for
\p-almost every curve $\ga:[0,l_\ga]\to X$, the set
$\ga^{-1}(U)$ is (relatively) open in $[0,l_\ga]$.
\end{deff}

\begin{lem} \label{lem-quasiopen-ppath-meas}
\textup{(Shanmugalingam~\cite[Remark~3.5]{Sh-harm} and
Bj\"orn--Bj\"orn~\cite[Lemma~9.3]{BBnonopen})}
If $U$ is quasiopen, then $U$ is \p-path open and measurable.
\end{lem}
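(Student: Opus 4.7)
The plan is to handle the two conclusions in turn, starting with the easier measurability. By quasiopenness, pick open $G_j\subset X$ with $\Cp(G_j)<1/j$ and such that $U\cup G_j$ is open for every $j$. Then $V:=\bigcap_j(U\cup G_j)$ is a Borel $G_\delta$ set containing $U$, and $V\setm U\subset\bigcap_j G_j$. Countable subadditivity of $\Cp$ forces the latter intersection to have zero capacity, hence zero $\mu$-measure. Since $\mu$ is complete, $U=V\setm(V\setm U)$ is measurable.

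For \p-path openness the idea is to manufacture Newtonian test functions whose curve integrals and pointwise values are simultaneously small. Choose the $G_j$ as above but with $\Cp(G_j)<2^{-j}$. By the definition of capacity we can pick, for each $j$, an everywhere-defined $u_j\in\Np(X)$ with $0\le u_j$, $u_j\ge 1$ on $G_j$, and $\|u_j\|_{\Np(X)}^p<2^{-j}$, together with an upper gradient $g_j\ge 0$ of $u_j$ with $\int_X g_j^p\,\dmu<2^{-j}$. The sequences $\|u_j\|_{L^p(X)}$ and $\|g_j\|_{L^p(X)}$ are summable, so $w:=\sum_j u_j$ lies in $\Np(X)$ with upper gradient $h:=\sum_j g_j\in L^p(X)$. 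In particular $w<\infty$ quasieverywhere, which forces $u_j\to 0$ q.e.; call the exceptional set $E$, so $\Cp(E)=0$. A standard Fuglede-type argument, using that $h$ is a genuine upper gradient of $w$ with finite $L^p$-norm, shows that $\ga$ misses $E$ for \p-almost every curve $\ga$.

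Now combine the null curve families. For \p-almost every $\ga\colon[0,l_\ga]\to X$ one has both $\int_\ga g_j\,ds\to 0$ (because $\int_\ga h\,ds<\infty$) and $u_j(\ga(t))\to 0$ for every $t$ (because $\ga$ avoids $E$). Fix such a $\ga$ and any $t_0\in\ga^{-1}(U)$. Choose $j$ large enough that $u_j(\ga(t_0))<\tfrac14$ and $\int_\ga g_j\,ds<\tfrac14$. Since $U\cup G_j$ is open and $\ga$ is continuous, there is an open interval $I\ni t_0$ with $\ga(I)\subset U\cup G_j$. For $t\in I$ with $\ga(t)\ne\ga(t_0)$ the upper gradient inequality applied to the subcurve of $\ga$ between $t_0$ and $t$ gives
\[
 |u_j(\ga(t))-u_j(\ga(t_0))|\le\int_\ga g_j\,ds<\tfrac14,
\]
so $u_j(\ga(t))<\tfrac12<1$ and hence $\ga(t)\notin G_j$, forcing $\ga(t)\in U$ (the constant subcurve case is trivial). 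Thus $\ga^{-1}(U)$ is open, which is exactly \p-path openness.

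The delicate step I expect to require the most care is the Fuglede-type assertion that a set of zero $\Cp$ is avoided by \p-almost every curve, and in particular the insistence that $h$ be a genuine upper gradient (not merely \p-weak), so that the inequality above can be invoked on \emph{every} subcurve of the given $\ga$. This is handled by appealing to the Koskela--MacManus approximation of \p-weak upper gradients by upper gradients, or, as sketched above, by taking upper gradients directly from the infimum defining $\|u_j\|_{\Np(X)}$ and observing that the series $\sum g_j$ is itself an upper gradient of $\sum u_j$.
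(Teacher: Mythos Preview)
The paper does not prove this lemma; it simply quotes it from Shanmugalingam~\cite{Sh-harm} and Bj\"orn--Bj\"orn~\cite{BBnonopen}. Your argument is correct and is essentially the standard one found in those references: measurability via a $G_\delta$ approximation modulo a null-capacity (hence null-measure) set, and \p-path openness by summing near-optimal capacity test functions $u_j$ and their upper gradients $g_j$ so that both $u_j\circ\ga\to0$ pointwise and $\int_\ga g_j\to0$ along \p-almost every curve~$\ga$.

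One small point worth tightening: the claim that $h=\sum_j g_j$ is a \emph{genuine} upper gradient of $w=\sum_j u_j$ is not immediate under the paper's convention that the left-hand side of~\eqref{ug-cond} equals $\infty$ whenever either term is infinite; if $w(\ga(0))=w(\ga(l_\ga))=\infty$ there is no obvious reason why $\int_\ga h=\infty$. Fortunately you do not need this. What you actually use is (i)~$h\in L^p(X)$, so that $\int_\ga h<\infty$ and hence $\int_\ga g_j\to0$ for \p-almost every~$\ga$, and (ii)~\p-almost every curve avoids $E\subset\{w=\infty\}$. The latter follows from $\Cp(\{w=\infty\})=0$ (since $w\in\Np(X)$) via the standard implication $\Cp(E)=0\Rightarrow\Modp(\Ga_E^X)=0$, i.e.\ Proposition~1.48 in~\cite{BBbook}, which the paper invokes elsewhere. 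The final subcurve estimate is legitimate precisely because each $g_j$ was chosen as a genuine upper gradient of $u_j$, so~\eqref{ug-cond} holds on every subcurve of~$\ga$.
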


The following lemma is from \cite[Corollary~3.7]{BBnonopen}.
Recall that $g_{u,U}$ denotes the minimal \p-weak upper gradient
of $u$ with respect to $U$, while $g_u$ denotes the
minimal \p-weak upper gradient
of $u$ with respect to $X$.

\begin{lem} \label{lem-minimal-restrict}
Let $U$ be a measurable \p-path open set
and $u \in \Dploc(X)$.
Then
\[
    g_{u,U}=g_u
    \quad \text{a.e.\ in } U.
\]
In particular, this holds if $U$ is quasiopen.
\end{lem}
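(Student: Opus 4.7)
The plan is to prove the two inequalities $g_{u,U}\le g_u$ and $g_u\le g_{u,U}$ a.e.\ in $U$ separately; the ``in particular'' clause then follows from Lemma~\ref{lem-quasiopen-ppath-meas}. The first inequality is immediate: any $\rho\in L^p(X)$ witnessing that an exceptional curve family has zero $p$-modulus in $X$ restricts to $\rho|_U\in L^p(U)$ with the same property for the subfamily of curves lying in $U$, hence $g_u|_U$ is a \p-weak upper gradient of $u|_U$ on $U$, and minimality of $g_{u,U}$ in $\Lploc(U)$ gives $g_{u,U}\le g_u$ a.e.\ in $U$.

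For the reverse direction I would introduce the candidate
\[
   \gt:=g_{u,U}\chi_U+g_u\chi_{X\setm U}\quad\text{on }X,
\]
where $g_{u,U}$ is extended by zero outside $U$; the first inequality gives $\gt\in\Lploc(X)$. If $\gt$ is a \p-weak upper gradient of $u$ on $X$, then minimality of $g_u$ in $\Lploc(X)$ forces $g_u\le\gt$ a.e., which on $U$ reads $g_u\le g_{u,U}$ a.e., completing the proof.

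To show $\gt$ is a \p-weak upper gradient, I would exclude the family $\Ga_0$ of curves $\ga:[0,l_\ga]\to X$ for which at least one of the following fails: \textup{(i)}~the $g_u$-upper gradient inequality holds on $\ga$ and all its subcurves; \textup{(ii)}~$\int_\ga g_u\,ds<\infty$; \textup{(iii)}~$\ga^{-1}(U)$ is relatively open in $[0,l_\ga]$; \textup{(iv)}~the $g_{u,U}$-upper gradient inequality holds on every subcurve of $\ga$ whose image lies in $U$. Pieces (i)--(iii) have zero modulus by the defining properties of $g_u$, the standard fact that functions in $\Lploc(X)$ are integrable along \p-a.e.\ curve, and the \p-path openness of $U$, respectively. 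For (iv), the bad curves in $U$ have zero modulus in $U$ and, by extending test functions by zero, also in $X$; the family of \emph{parent} curves containing such a bad subcurve then has zero modulus as well, since the same test function $\rho$ satisfies $\int_\ga\rho\,ds\ge\int_{\ga'}\rho\,ds=\infty$ whenever $\ga'$ is a bad subcurve of $\ga$.

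Given $\ga\notin\Ga_0$, set $\Psi(t):=u(\ga(t))$. By (i) and (ii), $\Psi$ is absolutely continuous on $[0,l_\ga]$ with $|\Psi'(t)|\le g_u(\ga(t))$ a.e. By (iii), $\ga^{-1}(U)$ decomposes as a countable disjoint union of relatively open intervals $I_j$; for each compact $[c,d]\subset I_j$ the curve $\ga|_{[c,d]}$ lies in $U$, so (iv) gives $|\Psi(c)-\Psi(d)|\le\int_c^d g_{u,U}(\ga(s))\,ds$, which forces $|\Psi'(t)|\le g_{u,U}(\ga(t))$ a.e.\ on $\ga^{-1}(U)$. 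Combining the two bounds, $|\Psi'(t)|\le\gt(\ga(t))$ a.e.\ on $[0,l_\ga]$, and integrating the AC function $\Psi$ yields $|u(\ga(0))-u(\ga(l_\ga))|\le\int_\ga\gt\,ds$, as required. The main obstacle is (iv): one must transfer zero \p-modulus from curves in $U$ to curves in $X$, and then lift it from a family of bad subcurves to the family of parent curves containing them. Phrasing the endpoint bound through the absolutely continuous function $\Psi$ cleanly avoids an ad~hoc telescoping across the possibly intricate set $\ga^{-1}(U)$ and its closed complement.
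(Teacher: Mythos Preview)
The paper does not supply its own proof of this lemma; it is quoted from \cite[Corollary~3.7]{BBnonopen}. Your argument is the standard one and is correct: glue $\gt=g_{u,U}\chi_U+g_u\chi_{X\setm U}$, show it is a \p-weak upper gradient of $u$ on $X$, and invoke minimality of $g_u$. The absolute-continuity device, reducing the endpoint inequality to the pointwise bound $|\Psi'|\le\gt\circ\ga$ a.e., is exactly the clean way to avoid an infinite telescoping over the components of $\ga^{-1}(U)$ and its complement.

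One small point deserves an extra line. To pass from $|\Psi(c)-\Psi(d)|\le\int_c^d g_{u,U}(\ga(s))\,ds$ to $|\Psi'(t)|\le g_{u,U}(\ga(t))$ a.e.\ on $\ga^{-1}(U)$ you need $g_{u,U}\circ\ga$ to be locally integrable there, so that Lebesgue differentiation applies. This does not follow from your (i)--(iv) alone, but is easy to arrange: the already-proved inequality $g_{u,U}\le g_u$ a.e.\ in $U$ means that for \p-a.e.\ curve $\ga$ one has $g_{u,U}(\ga(t))\le g_u(\ga(t))$ for a.e.\ $t\in\ga^{-1}(U)$ (preimages of $\mu$-null sets have zero length along \p-a.e.\ curve), and together with (ii) this gives $g_{u,U}\circ\ga\in L^1(\ga^{-1}(U))$. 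Add this as a fifth exclusion in $\Ga_0$ and the argument is complete.
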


For a  family of curves $\Gamma$ on $X$,
we define its \emph{\p-modulus}
\[
         \Modp(\Gamma):=\inf \int_X \rho^p \, d\mu,
\]
where the infimum is taken over all nonnegative Borel functions $\rho$
such that $\int_\ga \rho \,ds \ge 1$ for all $\ga \in \Gamma$.
Let $\Ga_E^U$ be the set of curves $\ga:[0,l_\ga]\to U$  which hit $E \subset U$, i.e.\  $\ga^{-1}(E) \ne \emptyset$.

We are now ready to make some new observations
for \p-path open sets.

\begin{lem} \label{lem-Modp}
If $U$ is \p-path open and $E \subset U$,
then
$\Modp(\Ga_E^U)=\Modp(\Ga_E^X)$.
\end{lem}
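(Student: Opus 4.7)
The inequality $\Modp(\Ga_E^U)\le\Modp(\Ga_E^X)$ is trivial, since $\Ga_E^U\subset\Ga_E^X$ (every curve with image in $U$ is also a curve in $X$) and the \p-modulus is monotone. So the content of the lemma lies in the reverse inequality.

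My plan is to show that every curve in $\Ga_E^X$ contains a subcurve lying in $\Ga_E^U$, up to an exceptional family of zero \p-modulus. Since $U$ is \p-path open, there is a curve family $\Ga_0$ with $\Modp(\Ga_0)=0$ such that for every $\ga:[0,l_\ga]\to X$ with $\ga\notin\Ga_0$, the set $\ga^{-1}(U)$ is relatively open in $[0,l_\ga]$. Put $\Ga:=\Ga_E^X\setm\Ga_0$. For any $\ga\in\Ga$, pick $t_0\in\ga^{-1}(E)\subset\ga^{-1}(U)$; by openness there is a closed subinterval $[a,b]\subset[0,l_\ga]$ with $t_0\in[a,b]$ and $\ga([a,b])\subset U$. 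Since $\ga$ is nonconstant and rectifiable, we may choose $[a,b]$ so that the restriction $\gat:=\ga|_{[a,b]}$ is itself a nonconstant compact rectifiable curve in $U$ hitting $E$, i.e.\ $\gat\in\Ga_E^U$.

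Now let $\rho\ge 0$ be any Borel function admissible for $\Ga_E^U$, so $\int_{\gat}\rho\,ds\ge 1$ for every $\gat\in\Ga_E^U$. For every $\ga\in\Ga$, the subcurve $\gat$ produced above satisfies
\[
  \int_\ga \rho\,ds \ge \int_{\gat}\rho\,ds \ge 1,
\]
so $\rho$ is admissible for $\Ga$ as well. Taking the infimum over such $\rho$ yields $\Modp(\Ga)\le\Modp(\Ga_E^U)$. Combining this with the countable subadditivity of $\Modp$ and $\Modp(\Ga_0)=0$, we get
\[
  \Modp(\Ga_E^X)\le\Modp(\Ga)+\Modp(\Ga_0)\le\Modp(\Ga_E^U),
\]
which finishes the proof.

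The only mildly delicate point is the subcurve construction: one has to make sure that $[a,b]$ can be chosen so that $\gat$ is nonconstant (so that it qualifies as a curve under the conventions of the paper). This is where I expect to be slightly careful — if $t_0$ happens to coincide with $0$ or $l_\ga$, one simply shrinks the interval on the interior side, and if $\ga$ is locally constant near $t_0$ one enlarges $[a,b]$ within $\ga^{-1}(U)$ until the restriction is nonconstant; this is possible because $\ga$ itself is assumed nonconstant and the component of $\ga^{-1}(U)$ containing $t_0$ is an interval of positive length. Everything else is bookkeeping with modulus and admissible densities.
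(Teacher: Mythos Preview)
Your argument is correct and follows essentially the same route as the paper: find a subcurve in $\Ga_E^U$ for \p-almost every $\ga\in\Ga_E^X$, then use this to compare moduli (the paper packages your admissible-density step as a citation to \cite[Lemma~1.34(c)]{BBbook}). Your caution about nonconstancy of $\gat$ is unnecessary here, since under the paper's conventions curves are arc-length parameterized, so any restriction to $[a,b]$ with $a<b$ is automatically nonconstant.
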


If $U$ is not \p-path open, then this is not true in general:
Consider, e.g., $E=\{0\} \subset \R$ and $U=\Q$, in which case
$\Modp(\Ga_E^U)=0 < \Modp(\Ga_E^\R)$. 

\begin{proof}
Since $\Ga_E^U \subset \Ga_E^X$, we have
$
    \Modp(\Ga_E^U) \le \Modp(\Ga_E^X)
$.

Conversely, as $U$ is \p-path open, \p-almost every curve $\ga \in \Ga_E^X$
is such that $\ga^{-1}(U)$ is relatively open in $[0,l_\ga]$
(and we can ignore the other curves  in $\Ga_E^X$).
Moreover $\ga^{-1}(U) \supset \ga^{-1}(E) \ne \emptyset$.
Hence $\ga^{-1}(U)$ is a nonempty
countable union of relatively open
intervals of $[0,l_\ga]$ (see e.g.\ Lemma~1.4 in \cite{BBbook}), at
least one of which contains a point $t \in \ga^{-1}(E)$.
We can thus find a small compact interval $[a,b] \ni t$, $0 \le a <b \le l_\ga$,
such that $[a,b] \subset \ga^{-1}(U)$.
Then $\ga|_{[a,b]} \in \Ga_E^U$,
and Lemma~1.34\,(c)  in \cite{BBbook} implies that 
$\Modp(\Ga_E^X) \le \Modp(\Ga_E^U)$.
\end{proof}

\begin{prop} \label{prop-Dp=infty}
Assume that $X$ supports a \p-Poincar\'e inequality.
Let $U$ be a measurable \p-path open set and
$u \in \Dp(U)$.
Then  $\Cp(\{x \in U : |u(x)|=\infty\})=0$.
\end{prop}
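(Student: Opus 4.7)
The plan is to prove $\Cp(E)=0$, where $E:=\{x\in U:|u(x)|=\infty\}$, by first establishing that the $p$-modulus of the curve family $\Gamma_E^X$ vanishes, and then promoting this modulus statement into a capacity bound via the $p$-Poincar\'e inequality.

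For the modulus step, I would fix an upper gradient $g\in L^p(U)$ of $u$ on $U$ and regard it as a measurable function on $X$ by zero-extension outside $U$ (possibly modified on a null set to be Borel). The key observation is that every curve $\gamma:[0,l_\gamma]\to U$ meeting $E$ must satisfy $\int_\gamma g\,ds=\infty$: if $\gamma(t)\in E$, then the upper gradient inequality applied to the subcurve $\gamma|_{[0,t]}$ reads
\[
|u(\gamma(0))-u(\gamma(t))| \le \int_{\gamma|_{[0,t]}} g\,ds \le \int_\gamma g\,ds,
\]
whose left-hand side is $\infty$ by the standing convention. Thus $\Gamma_E^U$ is contained in the family $\Gamma_\infty:=\{\gamma:\int_\gamma g\,ds=\infty\}$, whose $p$-modulus vanishes via the standard argument (take $g/n$ as admissible density and let $n\to\infty$). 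Lemma~\ref{lem-Modp} then lifts this to $\Modp(\Gamma_E^X)=0$.

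For the capacity step, the $p$-Poincar\'e inequality enters. Given $\varepsilon>0$, the vanishing modulus yields a nonnegative Borel density $\rho_\varepsilon\in L^p(X)$ with $\|\rho_\varepsilon\|_{L^p(X)}<\varepsilon$ and $\int_\gamma \rho_\varepsilon\,ds\ge 1$ for every $\gamma\in\Gamma_E^X$. From this density I would build a potential function $v_\varepsilon\in\Np(X)$, e.g.\ $v_\varepsilon(x):=\min\{1,\inf_\gamma \int_\gamma \rho_\varepsilon\,ds\}$ with the infimum over rectifiable curves from $x$ to $E$, which satisfies $v_\varepsilon\ge 1$ on $E$ and has $\rho_\varepsilon$ as a $p$-weak upper gradient. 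The $p$-Poincar\'e inequality is then used to control $\|v_\varepsilon\|_{L^p(X)}$ in terms of $\|\rho_\varepsilon\|_{L^p(X)}$, giving $\|v_\varepsilon\|_{\Np(X)}\lesssim \varepsilon$. Letting $\varepsilon\to 0$ forces $\Cp(E)=0$.

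The main obstacle is precisely this second step: zero $p$-modulus does not in general imply zero $\Cp$-capacity, and the Poincar\'e inequality is exactly what allows a modulus-admissible density to be promoted into a genuine Newtonian function with comparable norm. The first (modulus) step, by contrast, is a direct consequence of the upper gradient axioms and the standard $p$-modulus theory for $L^p$ Borel densities.
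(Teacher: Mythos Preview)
Your overall strategy is exactly the paper's: establish $\Modp(\Ga_E^U)=0$ from the upper-gradient inequality, lift to $\Modp(\Ga_E^X)=0$ via Lemma~\ref{lem-Modp}, and then invoke the \p-Poincar\'e inequality to conclude $\Cp(E)=0$. The paper carries out the last step by quoting Proposition~4.9 in~\cite{BBbook} as a black box, so your modulus argument (with the trivial fix that when $\ga(0)\in E$ one uses $\ga|_{[0,l_\ga]}$ rather than a degenerate subcurve) is complete and matches the paper.

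The gap is in your proposed proof of the capacity step. Your function $v_\varepsilon(x)=\min\{1,\inf_\ga\int_\ga\rho_\varepsilon\,ds\}$, with the infimum over curves from $x$ to $E$, is identically~$1$: any curve from $x$ to $E$ hits $E$ and therefore lies in $\Ga_E^X$, so $\int_\ga\rho_\varepsilon\,ds\ge1$ by admissibility of $\rho_\varepsilon$. Thus $v_\varepsilon$ gives no control on $\Cp(E)$, and there is nothing for the Poincar\'e inequality to act on. The actual argument behind Proposition~4.9 in~\cite{BBbook} goes differently: since $\Modp(\Ga_E^X)=0$, \p-almost every curve avoids $E$, so $0$ is a \p-weak upper gradient of $\chi_E$. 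The Poincar\'e inequality then forces $\chi_E$ to be a.e.\ constant on each ball, hence (by connectedness) a.e.\ constant on $X$; the case $\mu(X\setm E)=0$ is ruled out because it would make the modulus of \emph{all} curves zero, contradicting Poincar\'e. Thus $\mu(E)=0$, whence $\chi_E\in\Np(X)$ with $\|\chi_E\|_{\Np(X)}=0$ and $\chi_E\ge1$ on $E$, giving $\Cp(E)=0$.
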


\begin{proof}
Let $E=\{x \in U : |u(x)|=\infty\}$.
As $u \in \Dp(U)$, $u$ is measurable and thus also $E$ is measurable.
Let $g \in L^p(U)$ be an upper gradient of $u$ in $U$.
Each curve $\ga \in \Ga_E^U$
contains a nonconstant subcurve $\gat$ that either starts
or ends in $E$.
As $g$ is an upper gradient of $u$ in $U$ and $|u(x)|=\infty$ for $x \in E$,
we see that
\[
     \int_{\ga} g\,ds \ge \int_{\gat} g\,ds=\infty.
\]
Hence,
$\Modp(\Ga_E^U)=0$.
By Lemma~\ref{lem-Modp}, $\Modp(\Ga_E^X)=\Modp(\Ga_E^U)=0$.
Finally, Proposition~4.9 in \cite{BBbook} shows that
$\Cp(E)=0$.
\end{proof}

\section{Fine topology}
\label{sect-fine-cont}

\emph{Throughout the rest of the paper, we assume that
$1<p<\infty$, that $X$ is complete and  supports a \p-Poincar\'e inequality, and that
$\mu$ is doubling.}

\medskip

In this section we recall the basic facts about the fine topology associated
with Sobolev spaces and prove some auxiliary results which will be crucial in the
subsequent sections.

\begin{deff}\label{deff-thinness}
A set $E\subset X$ is  \emph{thin} at $x\in X$ if
\begin{equation}   \label{deff-thin}
\int_0^1\biggl(\frac{\cp(E\cap B(x,r),B(x,2r))}{\cp(B(x,r),B(x,2r))}\biggr)^{1/(p-1)}
     \frac{dr}{r}<\infty.
\end{equation}
A set $U\subset X$ is \emph{finely open} if
$X\setminus U$ is thin at each point $x\in U$.
\end{deff}

It is easy to see that the finely open sets give rise to a
topology, which is called the \emph{fine topology}.
Every open set is finely open, but the converse is not true in general.

In the definition of thinness,
we make the convention that the integrand
is 1 whenever $\cp(B(x,r),B(x,2r))=0$.
This happens e.g.\ if $X=B(x,2r)$,
but never
if $r < \frac{1}{2}\diam X$.
Note that thinness is a local property.

\begin{deff}
A function $u : U \to \eR$, defined on a finely open set $U$, is
\emph{finely continuous} if it is continuous when $U$ is equipped with the
fine topology and $\eR$ with the usual topology.
\end{deff}

Since every open set is finely open,
the fine topology
generated by the
finely open\index{finely!open} sets is finer than the metric topology.
In fact, it is
the coarsest topology making all superharmonic functions
finely continuous, by
J.~Bj\"orn~\cite[Theorem~4.4]{JB-pfine},
Korte~\cite[Theorem~4.3]{korte08}
and Bj\"orn--Bj\"orn--Latvala~\cite[Theorem~1.1]{BBL1}.
See \cite[Section~11.6]{BBbook} and \cite{BBL1}
for further discussion on thinness and the fine topology.

The following definition will play an important role
in the later sections of the paper.

\begin{deff}
A set $E \subset A$ is a \emph{\p-strict subset} of $A$ if
there is a function $u \in \Np_0(A)$ such that $u =1$ on $E$.
\end{deff}

Equivalently, it can in addition be required
that $0 \le u \le 1$,
as in Kilpel\"ainen--Mal\'y~\cite{KiMa92}.
The following lemma shows that there are many nice \p-strict subsets
of finely open sets.
They play the role of compact subsets of open sets.
In particular, there is a base of fine neighbourhoods, consisting only of
\p-strict subsets.

\begin{lem} \label{lem-finely-open-x}
Let $V$ be finely open and let $x_0\in V$.
Then there exist a finely open $W\ni x_0$ and
an upper semicontinuous finely continuous function $v\in \Np_0(V)$
with compact support in $V$ such that $0\le v\le1$ everywhere
and $v=1$ in $W$.

In particular, $W$ is a \p-strict subset of $V$ and $W\Subset V$.
\end{lem}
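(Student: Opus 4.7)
Since $V$ is finely open, the set $E:=X\setminus V$ is thin at $x_0$. The plan is to use this thinness to construct a capacitary-type potential that separates $x_0$ from $E$ in the fine topology, and then combine it with a Lipschitz cutoff to produce $v$. First, choose $r>0$ so that $\overline{B(x_0,2r)}$ is compact (possible since $X$ is proper), and write $B=B(x_0,r)$.

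The central step is to build $u\in\Np_0(2B)$ with $0\le u\le 1$, $u$ lower semicontinuous and finely continuous, $u(x_0)<\tfrac12$, and $u>1$ strictly on $E\cap B$. A standard Wolff/capacitary-potential construction delivers this: take capacitary potentials of $E\cap B_k$ in $2B_k$, where $B_k=B(x_0,2^{-k}r)$, and sum them with weights depending on $\cp(B_k,2B_k)$ so that the integrability condition in~\eqref{deff-thin} gives convergence in $\Np_0(2B)$ and smallness at $x_0$. Doubling the resulting potential gives $u\ge 2$ q.e.\ on $E\cap B$; the strict inequality $u>1$ at every point of $E\cap B$ is then enforced by redefining $u$ on the capacity-zero exceptional set inside $E$ and passing to the lsc envelope.

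Next, take a Lipschitz cutoff $\eta$ with $0\le\eta\le 1$, $\eta=1$ on $\tfrac12B$ and $\spt\eta\subset B$, and define
\[
v:=\min\bigl(1,\,2(\eta-u)_+\bigr).
\]
Then $v$ is upper semicontinuous as the pointwise minimum of a constant and a usc function ($\eta$ being continuous and $u$ being lsc), is finely continuous wherever $u$ is, and $0\le v\le 1$. Since $\eta=0$ outside $B$ and $u>1\ge\eta$ on $E\cap B$, the function $v$ vanishes on all of $X\setminus V$, so $v\in\Np_0(V)$. Because $u$ is lsc with $u>1$ on $E\cap B$, the open set $\{u>1\}$ contains $E\cap B$, hence the closed set $\{u\le 1\}$ is disjoint from $E$, and
\[
\spt v\subset\overline B\cap\{u\le 1\}\subset V,
\]
which is compact. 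Finally $v(x_0)=\min(1,2(1-u(x_0)))=1$.

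Put $W:=\{y:u(y)<\eta(y)-\tfrac12\}$, a sub-level set of the finely continuous function $u-\eta$, so $W$ is finely open; it contains $x_0$ because $u(x_0)<\tfrac12=\eta(x_0)-\tfrac12$. On $W$, $\eta-u>\tfrac12$ forces $v=1$, so $v$ witnesses the $\p$-strictness of $W$ in $V$; moreover, since $v$ is usc with $v\le 1$, the set $\{v=1\}=\{v\ge 1\}$ is closed, so $\overline W\subset\{v=1\}\subset\spt v$ is a compact subset of $V$, giving $W\Subset V$. The main obstacle I foresee is the construction of $u$, especially the upgrade from ``$u>1$ q.e.\ on $E\cap B$'' to ``$u>1$ everywhere on $E\cap B$'' while preserving lower semicontinuity and keeping $u(x_0)$ small; once that is accomplished, everything else is routine bookkeeping with cutoffs, truncations, and the elementary properties of semicontinuous and finely continuous functions.
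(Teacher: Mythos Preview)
Your overall strategy is the paper's: build a separating potential $u$ that is large on $E\cap B$ and small at $x_0$, combine it with a Lipschitz cutoff, and truncate. The paper, however, does not construct $u$ by hand; it invokes the weak Cartan property \cite[Theorem~5.1]{BBL1}, which directly supplies a lower semicontinuous finely continuous $u\in\Np(B)$ with $0<u\le1$ in $B$, $u=1$ \emph{everywhere} on $E\cap B$, and $u(x_0)<1$. Your ``standard Wolff/capacitary-potential construction'' is effectively an outline of that theorem's proof, and in the metric setting it is genuinely nontrivial; once it is granted, the remaining cutoff-and-truncation steps in your argument are correct.

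Two concrete problems remain in your write-up. First, the requirements you impose on $u$ are contradictory: you ask for $0\le u\le1$ and also $u>1$ on $E\cap B$; presumably the upper bound should be dropped. Second, and this is the real gap you yourself flag, the upgrade from ``$u\ge2$ q.e.\ on $E\cap B$'' to ``$u>1$ everywhere on $E\cap B$'' via redefinition on the null set followed by the lsc envelope does not work: points of $E\cap B$ can be metric limits of points in $V$ where $u$ is small, so the lsc envelope may pull the values at exceptional points back below $1$, destroying the strict inequality you need for the open set $\{u>1\}$ to contain $E\cap B$. The paper's arrangement avoids this entirely. With $u\le1$ and $u=1$ on $E\cap B$ (as given by the weak Cartan property), the function $f=\eta(1-u)$ is upper semicontinuous and vanishes on $X\setm V$; setting $v=\min\{1,(4f/f(x_0)-1)_\limplus\}$ and $W=\{f>\tfrac12 f(x_0)\}$, compact support of $v$ in $V$ follows because the \emph{closed} set $\{f\ge \tfrac14 f(x_0)\}$ is contained in $\{f>0\}\subset V\cap\spt\eta$. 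No strict inequality on $E$ is needed, and the ``q.e.\ to everywhere'' issue is absorbed into the cited result rather than handled here.
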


Recall that $W\Subset V$
if $\overline{W}$ is a compact subset of $V$.

\begin{proof}
Since $V$ is finely  open, $E:=X \setm V$ is thin at $x_0$.
By the weak Cartan property
in Bj\"orn--Bj\"orn--Latvala~\cite[Theorem~5.1]{BBL1}
there exists a lower semicontinuous finely continuous function $u\in\Np(B)$
in a ball $B\ni x_0$ such that $0<u\le 1$ in $B$, $u=1$ in
$E\cap B$ and $u(x_0)<1$.
Let $0\le\eta\le1$ be a Lipschitz function with compact support in $B$
such that $\eta(x_0)=1$.
Then $f=\eta(1-u)\in\Np_0(V)$ is upper semicontinuous and finely continuous
in $X$ and $f(x_0)=1-u(x_0)>0$.

To conclude the proof, set
\[
W=\bigl\{x\in V: f(x)>\tfrac12 f(x_0)\bigr\}
\quad \text{and} \quad
v(x) = \min\biggl\{ 1, \biggl( \frac{4f(x)}{f(x_0)}-1\biggr)_\limplus \biggr\}.
\]
A simple calculation shows that $v=1$ in $W$,
and the upper semicontinuity of $f$ implies that
$W\Subset \spt v \Subset V$.
As $f$ is finely continuous, $W$ is finely open.
\end{proof}

The following quasi-Lindel\"of principle
will play an important role in the later sections.

\begin{thm} \label{thm-quasiLindelof}
\textup{(Quasi-Lindel\"of principle)}
For each family $\mathcal V$ of finely open sets there is a countable subfamily $\mathcal V'$ such that
\[
\Cp\biggl(\bigcup_{V\in\mathcal V}V\setminus \bigcup_{V'\in\mathcal V'}V'\biggr)=0.
\]
\end{thm}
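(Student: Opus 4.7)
The plan is to reduce $\mathcal V$ to a family of ``basic'' finely open sets equipped with test functions via Lemma~\ref{lem-finely-open-x}, extract a countable subfamily through a measure-theoretic maximization, and upgrade the resulting almost-everywhere identity to a quasi-everywhere one using the fine continuity of the involved functions.

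For each $V \in \mathcal V$ and each $x \in V$, Lemma~\ref{lem-finely-open-x} yields a finely open $W(x,V) \ni x$ and an upper semicontinuous, finely continuous function $v_{x,V} \in \Np_0(V) \subset \Np(X)$ with $0 \le v_{x,V} \le 1$, $v_{x,V} \equiv 1$ on $W(x,V)$, and $\supp v_{x,V} \Subset V$. Denote this refined family by $\mathcal W$; since $x \in W(x,V)$, we have $\bigcup \mathcal W = \bigcup \mathcal V$, and it suffices to find a countable $\mathcal C^* \subset \mathcal W$ with $\Cp(\bigcup \mathcal W \setminus \bigcup \mathcal C^*) = 0$, for then the countable collection $\mathcal V' := \{V \in \mathcal V : W(x,V) \in \mathcal C^* \text{ for some } x\}$ will do.

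Fix an exhaustion $\{B_n\}$ of $X$ by open balls; this exists since $X$ is separable (a consequence of completeness and doubling). For each $n$ set
\[
s_n := \sup\biggl\{\int_{B_n} \Bigl(\sup_{W \in \mathcal C} v_W\Bigr)^{\!p}\, d\mu : \mathcal C \subset \mathcal W \text{ countable}\biggr\} \le \mu(B_n) < \infty,
\]
and extract a countable $\mathcal C_n \subset \mathcal W$ realizing $s_n$ by taking the union of a maximizing sequence. Let $\mathcal C^* := \bigcup_n \mathcal C_n$ (countable) and $u^* := \sup_{W \in \mathcal C^*} v_W$. For any $W \in \mathcal W$, maximality forces $\int_{B_n}\max(u^*,v_W)^p\, d\mu = \int_{B_n}(u^*)^p\, d\mu$ for every $n$, whence $v_W \le u^*$ almost everywhere. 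Since $v_W \equiv 1$ on $W$, we conclude $u^* = 1$ a.e.\ on $\bigcup\mathcal V$. Moreover $u^* = 0$ outside $\bigcup_{W \in \mathcal C^*} \supp v_W \subset \bigcup \mathcal V'$, so
\[
\bigcup\mathcal V \setminus \bigcup\mathcal V' \subset \{u^* < 1\} \cap \bigcup\mathcal V,
\]
a set of $\mu$-measure zero.

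The principal obstacle is the final step: upgrading this measure-zero statement to capacity zero. The function $u^*$, being a countable supremum of finely continuous functions, is finely lower semicontinuous, and each $v_W$ is quasicontinuous as a Newtonian function on $X$. The expected route is either (i) to promote $v_W \le u^*$ a.e.\ to a quasieverywhere inequality, via Corollary~3.3 of Shan\-mu\-ga\-lin\-gam~\cite{Sh-rev}, after establishing enough Newtonian regularity of $u^*$, or (ii) to invoke the density properties of finely open sets in the doubling and \p-Poincar\'e setting (as developed in \cite{BBL1}) to conclude that a finely open subset of $\bigcup \mathcal V$ with vanishing measure must have vanishing capacity. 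The technical heart of the argument is the execution of this upgrade, which ties together the fine-topological and Newtonian viewpoints.
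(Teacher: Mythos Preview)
Your measure-theoretic extraction does yield a countable subfamily whose union agrees with $\bigcup\mathcal V$ up to a $\mu$-null set, but the passage from measure zero to capacity zero is not a detail to be tidied up---it is the entire content of the theorem, and neither of your routes closes it. Worse, the maximization may actively select a bad subfamily. Take $p=2$ in $\R^3$, let $S$ be the unit sphere, and let $\mathcal V$ consist of the open set $V_0=B(0,2)\setm S$ together with the balls $B(y,\varepsilon)$, $y\in S$. Since $\mu(S)=0$, each $s_n$ is already attained by a countable collection of refinements drawn from $V_0$ alone, so your $\mathcal C^*$ may lie entirely over $V_0$; then $\mathcal V'=\{V_0\}$ and the leftover is $S$, which has positive $2$-capacity.

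As for the two routes: in~(i) there is no uniform bound on the $\Np$-norms of the $v_W$, so $u^*$ need not be Newtonian or even quasicontinuous (a countable supremum of quasicontinuous functions is in general only finely lower semicontinuous), and Corollary~3.3 of~\cite{Sh-rev} does not apply. In~(ii) it is true that nonempty finely open sets have positive measure, but the leftover is a \emph{difference} of finely open sets, not itself finely open---the sphere above is precisely such a leftover. The fine lower semicontinuity of $u^*$ gives no information at points where $u^*=0$.

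The paper's proof is structurally different: it minimizes the capacity-based set function $\Phi(E)=\sum_k 2^{-k}\cp(E\cap B_k,2B_k)/\cp(B_k,2B_k)$ over countable subfamilies and derives a contradiction from the fine Kellogg property $\Cp(E\setm b_p E)=0$ of~\cite{BBL2}. That property ultimately rests on the Cheeger differentiable structure, and the authors note explicitly after the proof that no more elementary argument is known to them.
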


Our proof of the quasi-Lindel\"of principle
in metric spaces is quite similar to the proof
in Heinonen--Kilpel\"ainen--Mal\'y~\cite[Theorem~2.3]{HeKiMaly}
for unweighted $\R^n$. We include it here for the reader's convenience.
We will need the following lemma.

\begin{lemma}\label{lem:wienersumapprox}
Let $x\in X$ and let $\{x_k\}_{k=1}^\infty$ be a sequence of points in $X$
such that $d(x_k,x)<2^{-k-2}$ for $k=1,2,\dots$\,.
If $B_k=B(x_k,3\cdot 2^{-k-2})$,
then a set $E\subset X$ is thin at $x$ if and only if
\[
\sum_{k=1}^{\infty}\biggl(
   \frac{\cp(E\cap B_k, 2B_k)}{\cp(B_k, 2B_k)}
\biggr)^{1/(p-1)}<\infty.
\]
\end{lemma}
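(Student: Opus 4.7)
The plan is to prove both conditions equivalent to a common benchmark, namely the dyadic Wiener sum centered at $x$ itself,
\[
  S(x) := \sum_{k=1}^\infty \biggl(\frac{\cp(E\cap B(x,2^{-k}),\, B(x,2^{1-k}))}{\cp(B(x,2^{-k}),\, B(x,2^{1-k}))}\biggr)^{1/(p-1)}.
\]
We then show that $E$ is thin at $x$ if and only if $S(x)<\infty$, and that the sum in the lemma is comparable to $S(x)$.

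First I would verify the equivalence between the thinness integral and $S(x)$. I would split the interval $(0,1)$ into dyadic pieces $r\in(2^{-k-1},2^{-k}]$. By monotonicity of capacity under inclusion (the capacity increases when the set is enlarged and decreases when the ambient domain is enlarged), both $\cp(E\cap B(x,r),B(x,2r))$ and $\cp(B(x,r),B(x,2r))$ are sandwiched between their values at the endpoints of the dyadic interval. These endpoint values are in turn comparable because of the doubling property, via the standard estimate $\cp(B(x,r),B(x,2r))\asymp \mu(B(x,r))/r^p$ available in the present setting (see \cite{BBbook}). Hence each dyadic piece of the integral is comparable to the corresponding term of $S(x)$, and the integral and the sum converge together.

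Next I would compare $B_k$ with concentric balls at $x$. Since $d(x_k,x)<2^{-k-2}$, a direct triangle-inequality computation yields
\[
  B(x,2^{-k-1})\subset B_k\subset B(x,2^{-k})
  \quad\text{and}\quad
  B(x,2^{-k-1})\subset 2B_k\subset B(x,2^{1-k}).
\]
Using monotonicity of $\cp$ in both its arguments, together with doubling and the ball-capacity estimate cited above to pass between balls of comparable radii, both $\cp(B_k,2B_k)$ and $\cp(E\cap B_k,2B_k)$ are comparable, up to multiplicative constants depending only on the data $(p,\mu,\lambda)$ and independent of $k$ and of the choice of $\{x_k\}$, to $\cp(B(x,2^{-k}),B(x,2^{1-k}))$ and $\cp(E\cap B(x,2^{-k}),B(x,2^{1-k}))$ respectively. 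Raising to the power $1/(p-1)$ and summing, the series in the lemma is comparable to $S(x)$, and the two equivalences together give the claim.

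The main obstacle is the second step: shrinking the outer domain and enlarging the inner set both increase capacity, so when replacing $B_k$ by a concentric ball at $x$ one must compare in both directions simultaneously. The quantitative estimate $\cp(B(y,r),B(y,2r))\asymp \mu(B(y,r))/r^p$ is what makes this work, because it turns monotonicity (which is only one-sided) into two-sided comparability for balls of comparable radii, regardless of whether they appear as the inner capacitated set or as the outer reference domain. Once this is in place, the argument is essentially bookkeeping.
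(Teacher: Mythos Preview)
Your approach is the same as the paper's: reduce to a dyadic Wiener sum centred at $x$, then use the nestings $B(x,2^{-k-1})\subset B_k\subset B(x,2^{-k})$ together with capacity comparisons. The paper simply cites Lemma~4.6 in \cite{BBL1} for the integral--sum equivalence and Lemma~3.3 in \cite{Bj} (equivalently Proposition~6.16 in \cite{BBbook}) for the comparison step.

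One point deserves tightening. The integrand $r\mapsto\cp(E\cap B(x,r),B(x,2r))$ is \emph{not} monotone in $r$: enlarging $r$ increases the condenser set (capacity up) and the ambient ball (capacity down) simultaneously, so it is not literally sandwiched between its values at the dyadic endpoints as you write. The correct sandwiching on $r\in(2^{-k-1},2^{-k}]$ produces bounds of the form $\cp(E\cap B(x,2^{-k-1}),B(x,2^{1-k}))$ below and $\cp(E\cap B(x,2^{-k}),B(x,2^{-k}))$ above, with mismatched radius ratios; the same mismatch appears when you pass from $(B_k,2B_k)$ to concentric balls at $x$. What closes this, in both places, is the lemma that $\cp(F,B(x,\sigma r))$ is comparable for different $\sigma$ in a fixed range whenever $F\subset B(x,r)$; this is exactly Proposition~6.16 in \cite{BBbook}. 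The ball estimate $\cp(B,2B)\asymp\mu(B)/r^p$ handles only the denominator, not the $E$-numerator when the outer ball is changed, so your last paragraph slightly overstates its role. With that lemma in hand your outline is correct and coincides with the paper's argument.
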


\begin{proof}
Since $B(x,2^{-k-1})\subset B_k\subset B(x,2^{-k})$,
the claim is an easy consequence of
Lemma~4.6  in Bj\"orn--Bj\"orn--Latvala~\cite{BBL1}
and Lemma~3.3 in~\cite{Bj} (or Proposition~6.16 in~\cite{BBbook}).
\end{proof}

\begin{proof}[Proof of Theorem~\ref{thm-quasiLindelof}]
Since $X$ is separable there is a countable dense subset $Z \subset X$.
Let $\{B_k\}_{k=1}^\infty$ be
an enumeration
of all open balls with rational radii $< \tfrac{1}{4}\diam X$
and  centres in $Z$.
We define a monotone set function $\Phi$ by setting
\[
\Phi(E)=\sum_{k=1}^{\infty}2^{-k}\frac{\cp(E\cap B_k, 2B_k)}{\cp(B_k, 2B_k)}
\]
for any $E\subset X$.
Note that $\Phi(E)\le1$, and
$\Cp(E)=0$ if and only if $\Phi(E)=0$.

Let $U:=\bigcup_{V\in\mathcal V}V$ and define
\[
\delta:=\inf\biggl\{\Phi\biggl(U\setminus\bigcup_{W\in\mathcal W}W\biggr):
     \mathcal W\subset \mathcal V\ \text{is countable}\biggr\}
 \le 1.
\]
Then for each $j=1,2,\dots$ we may choose a countable subfamily $\mathcal V_j'$ of $\mathcal V$ such that
\[
\Phi\biggl(U\setminus\bigcup_{V'\in\mathcal V_j'}V'\biggr)\le \delta +\frac1{j}.
\]
By defining $\mathcal V':=\bigcup_{j=1}^{\infty}\mathcal V_j'$ we then have $\delta=\Phi(F)$ for
$F=U \setminus\bigcup_{V'\in\mathcal V'}V'$.

It is enough to show that $\delta=0$. Assume on the contrary that $\delta>0$.
We now invoke the fine Kellogg property from
Bj\"orn--Bj\"orn--Latvala~\cite[Corollary~1.3]{BBL2}
which says that
\[
C_p(E\setminus b_pE)=0
\quad \text{for any set } E \subset X,
\]
where the \emph{base} $b_p E$ consists
of all points in $X$ at which $E$ is not thin.
Since $\Cp(F)>0$, this implies that
\[
0<\Cp(F)\le\Cp(F\cap b_p F)+\Cp(F\setminus b_p F)=\Cp(F\cap b_p F).
\]

Accordingly, there exists $x\in F\cap b_pF\subset U$. Choose $V_0\in\mathcal V$ such that $x\in V_0$.
Then the set $F\setminus V_0$ is thin at $x$ whereas $F$ is not thin at $x$.
Now Lemma~\ref{lem:wienersumapprox} implies the existence of a ball
$B_k$ with centre in $Z$
and rational radius $< \frac{1}{4} \diam X$  such that
\[
\cp((F\setminus V_0)\cap B_k,2B_k)<\cp(F\cap B_k,2B_k).
\]
Then we have $\Phi(F\setminus V_0)<\Phi(F)=\delta$, which is a contradiction. Hence the claim holds.
\end{proof}

\begin{remark}
In the proof  of the quasi-Lindel\"of principle
we used the fine Kellogg property, whose proof
depends in turn on the Choquet property, the Cartan property
and ultimately on the Cheeger differentiable
structure, see Bj\"orn--Bj\"orn--Latvala~\cite{BBL2}.
It would be nice if one could obtain a more elementary proof
of the fine Kellogg property.

In fact, here we only used a seemingly milder
consequence of
the fine Kellogg property: if $\Cp(F)>0$, then $F \cap b_p F \ne \emptyset$.
However this consequence is equivalent to the full fine Kellogg property,
which can be seen as follows:
Let $F=E \setm b_p E$ for some set $E$.
As $F \subset E$, we directly have $b_p F \subset b_p E$, and thus
$F \cap b_p F \subset (E \setm b_p E) \cap b_p E = \emptyset$.
Hence the assumption implies that $\Cp(E \setm b_p E)=\Cp(F)=0$.
\end{remark}

We can now characterize quasiopen sets in several ways.
See Kilpel\"ainen--Mal\'y~\cite[Theorem~1.5]{KiMa92}
for the corresponding result in $\R^n$.

\begin{prop} \label{prop-quasiopen-char}
Let $U\subset X$ be arbitrary.
Then the following conditions are equivalent\/\textup{:}
\begin{enumerate}
\renewcommand{\theenumi}{\textup{(\roman{enumi})}}%
\item \label{b-1}
$U$ is quasiopen\/\textup{;}
\item \label{b-2}
$U$ is a union of a finely open set and a set of capacity zero\/\textup{;}
\item \label{b-3}
$U=\{x:u(x)>0\}$ for some nonnegative quasicontinuous $u$ on
$X$\textup{;}
\item \label{b-4}
$U=\{x:u(x)>0\}$ for some nonnegative $u\in \Np(X)$.
\end{enumerate}
\end{prop}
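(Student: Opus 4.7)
The plan is to close the cycle (iv)$\Rightarrow$(iii)$\Rightarrow$(i)$\Rightarrow$(iv) and separately establish the equivalence of (ii) with (iv). Among these, (iv)$\Rightarrow$(iii) is immediate, since every function in $\Np(X)$ is quasicontinuous by the result of Bj\"orn--Bj\"orn--Shan\-mu\-ga\-lin\-gam~\cite{BBS5} cited after Definition~\ref{def.PI.}. For (iii)$\Rightarrow$(i), given a nonnegative quasicontinuous $u$ with $U=\{u>0\}$ and $\varepsilon>0$, I would pick an open $G$ with $\Cp(G)<\varepsilon$ such that $u$ is finite and continuous on $X\setminus G$; then $U\setminus G$ is relatively open in $X\setminus G$, so $U\setminus G=W\setminus G$ for some open $W\subset X$, and hence $U\cup G=W\cup G$ is open, which verifies quasiopenness of $U$.

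The core is (i)$\Rightarrow$(iv), where I would construct from scratch a nonnegative $u\in\Np(X)$ with $\{u>0\}=U$. For each $k\ge1$ pick an open $G_k$ with $\Cp(G_k)<2^{-k}$ and $U\cup G_k$ open, together with $w_k\in\Np(X)$ satisfying $0\le w_k\le 1$, $w_k=1$ on $G_k$, and $\|w_k\|_{\Np(X)}^p<2^{-k}$. Then $E_k:=\{w_k=1\}\supset G_k$ has $\Cp(E_k)\le\|w_k\|_{\Np(X)}^p<2^{-k}$, so $\Cp(\bigcap_kE_k)=0$. Fix $x_0\in X$ and, for each $i\ge1$, set
\[
\psi_{k,i}(x)=(i-d(x,x_0))_+\,\min\{1,d(x,X\setminus(U\cup G_k))\},
\]
a bounded Lipschitz function with compact support in $B(x_0,i)$ and $\{\psi_{k,i}>0\}=B(x_0,i)\cap(U\cup G_k)$. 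By a Leibniz-type upper gradient estimate, $f_{k,i}:=\psi_{k,i}(1-w_k)_+$ lies in $\Np(X)$; it vanishes on $X\setminus U$, namely on $X\setminus(U\cup G_k)$ because $\psi_{k,i}=0$ and on $G_k\setminus U$ because $(1-w_k)_+=0$, and a direct check gives $\{f_{k,i}>0\}=B(x_0,i)\cap(U\setminus E_k)$. Choosing $c_{k,i}>0$ so that $\sum_{k,i}c_{k,i}\|f_{k,i}\|_{\Np(X)}<\infty$, the series $u:=\sum_{k,i}c_{k,i}f_{k,i}$ converges in $\Np(X)$ to a nonnegative Newtonian function with
\[
\{u>0\}=\bigcup_{k,i}\{f_{k,i}>0\}=U\setminus\bigcap_kE_k,
\]
which differs from $U$ only on the capacity-zero set $U\cap\bigcap_kE_k$. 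Redefining $u$ to equal $1$ on that set does not alter its $\Np(X)$-equivalence class and yields $\{u>0\}=U$.

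For the two implications involving (ii), I would use the fine continuity quasi-everywhere of Newtonian functions (J.~Bj\"orn~\cite{JB-pfine}, Korte~\cite{korte08}) together with the quasi-Lindel\"of principle (Theorem~\ref{thm-quasiLindelof}). For (iv)$\Rightarrow$(ii), let $N$ be a capacity-zero set off which $u\in\Np(X)$ is finely continuous; for each $x\in U\setminus N$, fine continuity produces a fine neighborhood of $x$ on which $u>\tfrac12 u(x)>0$, and removing the thin set $N$ (capacity-zero sets are thin at every point) preserves this property, so $V:=U\setminus N$ is finely open and $U=V\cup(U\cap N)$ is the required decomposition. Conversely, for (ii)$\Rightarrow$(iv), write $U=V\cup N$ with $V$ finely open and $\Cp(N)=0$. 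Lemma~\ref{lem-finely-open-x} furnishes, for each $x\in V$, a finely open $W_x\ni x$ and a function $v_x\in\Np_0(V)$ with $0\le v_x\le 1$ and $v_x=1$ on $W_x$. The quasi-Lindel\"of principle extracts a countable subfamily $\{W_{x_j}\}$ with $\Cp(V\setminus\bigcup_jW_{x_j})=0$, and a weighted sum $u=\sum_j c_jv_{x_j}$ converging in $\Np(X)$ satisfies $\bigcup_jW_{x_j}\subset\{u>0\}\subset V$, so $U\setminus\{u>0\}$ has capacity zero; the final capacity-zero redefinition, as in (i)$\Rightarrow$(iv), completes the construction.

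The main obstacle is the construction underlying (i)$\Rightarrow$(iv): securing a single Newtonian function whose positivity set equals $U$ exactly rather than only quasi-everywhere. The scheme succeeds because the Sobolev capacity is insensitive to modifications on capacity-zero sets, and because the product $\psi_{k,i}(1-w_k)_+$ is tailored to vanish simultaneously on the ``outer'' piece $X\setminus(U\cup G_k)$ (where $\psi_{k,i}=0$) and on the ``error'' piece $G_k$ (where $w_k=1$), so that as $(k,i)$ varies the series $u$ sweeps out all of $U$ up to a capacity-zero defect that can be repaired by redefinition; the same bookkeeping handles (ii)$\Rightarrow$(iv).
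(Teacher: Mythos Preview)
Your argument is correct, and your route differs from the paper's in an interesting way. The paper closes the equivalence by importing (i)$\Leftrightarrow$(ii) from the companion papers \cite{BBL1} and \cite{BBL2} (Theorem~4.9(a) and Theorem~1.4(a), respectively) and then proving (ii)$\Rightarrow$(iv) via Lemma~\ref{lem-finely-open-x} and the quasi-Lindel\"of principle, exactly as you do. By contrast, you bypass the external (i)$\Rightarrow$(ii) step entirely: your direct construction for (i)$\Rightarrow$(iv) uses only the definition of quasiopenness, capacitary test functions $w_k$, and compactly supported Lipschitz cutoffs, with no fine-topology input whatsoever. You then recover (iv)$\Rightarrow$(ii) from fine continuity q.e.\ of $\Np(X)$-functions (J.~Bj\"orn~\cite{JB-pfine}, Korte~\cite{korte08}), which is more elementary than the weak Cartan property underlying \cite[Theorem~4.9(a)]{BBL1}. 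The upshot is that your cycle (i)$\Leftrightarrow$(iii)$\Leftrightarrow$(iv) is essentially self-contained and independent of the Cartan/Choquet/Kellogg machinery; only the link with (ii), specifically (ii)$\Rightarrow$(iv), still needs the quasi-Lindel\"of principle and hence the fine Kellogg property from~\cite{BBL2}, as in the paper.

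Two minor points worth tightening in a final write-up: in the product step $f_{k,i}=\psi_{k,i}(1-w_k)_+$, note that $(1-w_k)_+$ need not lie in $\Np(X)$ when $\mu(X)=\infty$, so you cannot invoke Lemma~\ref{lem-product} directly; rewriting $f_{k,i}=\psi_{k,i}-\psi_{k,i}w_k$ and applying Lemma~\ref{lem-product} to the bounded pair $\psi_{k,i},w_k\in\Np(X)$ fixes this cleanly. Second, to conclude that the pointwise (monotone) sum $u=\sum c_{k,i}f_{k,i}$ itself lies in $\Np(X)$ (not merely some a.e.\ representative), use that the partial sums converge everywhere and that their upper gradients converge in $L^p$, so a Fuglede-type lemma (e.g.\ \cite[Proposition~2.3]{BBbook}) yields a \p-weak upper gradient for the pointwise limit.
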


\begin{proof}
\ref{b-1} \imp \ref{b-2} This is Theorem~4.9\,(a) in
Bj\"orn--Bj\"orn--Latvala~\cite{BBL1}.

\ref{b-2} \imp \ref{b-1}
This follows from Theorem~1.4\,(a) in Bj\"orn--Bj\"orn--Latvala~\cite{BBL2}.

\ref{b-4} \imp \ref{b-3}
By Theorem~1.1 in Bj\"orn--Bj\"orn--Shan\-mu\-ga\-lin\-gam~\cite{BBS5},
$u$ is quasicontinuous and thus \ref{b-3} holds.

\ref{b-3} \imp \ref{b-1} This is well known and easy to prove.

\ref{b-2} \imp \ref{b-4}
Assume that $V\subset U$ is finely open and $\Cp(U\setminus V)=0$.
By Lemma~\ref{lem-finely-open-x}, for each $x\in V$
we find $v_x\in \Np_0(V)$ such that $0\le v_x\le 1$, $v_x(x)=1$
and $v_x=0$ outside of $V$.
Since $v_x$ is quasicontinuous, $\{y:v_x(y)>0\}$ is
a quasiopen subset of $V$.
Therefore, by the quasi-Lindel\"of principle (Theorem~\ref{thm-quasiLindelof}),
the family $\{v_x:x\in V\}$ contains a
countable subfamily $\{v_j\}_{j=1}^\infty$
such that $\Cp(Z)=0$ for the set
\[
Z:=\{x\in V: v_j(x)=0 \textrm{ for all } j\}.
\]
Choose $a_j>0$ so that
$v:=\sum_{j=1}^\infty a_jv_j \in \Np(X)$
and let
\[
u=\begin{cases}
  1 & \textrm{on }  Z\cup(U\setminus V),\\
  v & \textrm{elsewhere}.
\end{cases}
\]
Since $u=v$ q.e., also $u \in \Np(X)$.
Moreover $U=\{x:u(x)>0\}$.
\end{proof}

\section{Local Newtonian spaces on quasiopen sets}
\label{sect-Newtonian}

Solutions of differential equations and variational problems
are usually considered in local Sobolev spaces.
On quasiopen sets there
are at least three different natural candidates for a
local Newtonian space. 
The first is $\Nploc(U)$ that we introduced already in Section~\ref{sect-prelim}.
The others are $\Npploc(U)$ and $\Npqloc(U)$ that we introduce below.

The space $\Nploc(U)$ is natural in connection with Newtonian spaces on metric spaces,
but for the classical Sobolev spaces
it is a less obvious definition to consider.
On $\R^n$ it is not even obvious how to define (nonlocal) Sobolev spaces on quasiopen
sets. A fruitful definition of $\Wp(U)$ was given by
Kilpel\"ainen--Mal\'y~\cite{KiMa92} and they also introduced a
local space (called $\Wploc(U)$ therein)
 which is similar to our definition of $\Npploc(U)$.
The same definitions are used in Latvala~\cite{LatPhD}.
Yet another definition of a local space was considered
by Mal\'y--Ziemer~\cite[p.~149]{MZ} (and called
$W^{1,p}_{\textup{\p-loc}}(U)$), see also Open problem~\ref{openprob-p-loc}.
This last definition inspired our definition of $\Npqloc(U)$ below.
To give it we first need to discuss quasicoverings.

\begin{deff}
A family $\B$ of quasiopen sets is a \emph{quasicovering} of a set $E$ if
it is countable, $\bigcup_{U \in \B} U\subset E$
and $\Cp(E\setm \bigcup_{U \in \B} U) =0$.
If every $V \in \B$ is a finely open \p-strict subset of $E$
and $\overline{V} \Subset E$, then
$\B$ is a \emph{\p-strict quasicovering} of $E$.
\end{deff}

This definition of a quasicovering is slightly different from
the one in Kilpel\"ainen--Mal\'y~\cite{KiMa92}, in that we require
the covering to be countable and that it consists of subsets of $E$.
This will be convenient for us, as we have no use for other quasicoverings
in this paper.
Moreover with our definition we get another
characterization of quasiopen sets in Lemma~\ref{lem-exist-quasicover} below.

\begin{prop}   \label{prop-strict-quasicover}
Let $U$ be quasiopen.
Then there exists a \p-strict quasicovering $\B$ of $U$ consisting of finely
open sets. Moreover, the Newtonian functions  associated with the \p-strict
subsets in $\B$ can be chosen compactly supported in $U$.
\end{prop}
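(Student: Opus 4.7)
The plan is to combine the fine-topology decomposition of quasiopen sets with the Cartan-type neighbourhood construction of Lemma~\ref{lem-finely-open-x} and then extract a countable subfamily via the quasi-Lindel\"of principle (Theorem~\ref{thm-quasiLindelof}).

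First I would use the implication \ref{b-1}$\imp$\ref{b-2} of Proposition~\ref{prop-quasiopen-char} to write $U=V_0\cup N$, where $V_0$ is finely open and $\Cp(N)=0$. For every $x\in V_0$, Lemma~\ref{lem-finely-open-x} applied to $V_0$ produces a finely open neighbourhood $W_x\ni x$ and an upper semicontinuous finely continuous function $v_x\in\Np_0(V_0)$ with $0\le v_x\le 1$, $v_x=1$ on $W_x$, and $\spt v_x$ compact in $V_0$; moreover $\overline{W_x}\Subset V_0$. Since $V_0\subset U$, we have $\Np_0(V_0)\subset\Np_0(U)$, and $\spt v_x$ is compact in $U$, so each $W_x$ is a finely open \p-strict subset of $U$ with $\overline{W_x}\Subset U$ and with a compactly supported witness $v_x\in\Np_0(U)$.

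Next I would apply the quasi-Lindel\"of principle to the family $\mathcal V=\{W_x:x\in V_0\}$ of finely open sets, obtaining a countable subfamily $\{W_{x_j}\}_{j=1}^\infty$ with
\[
\Cp\biggl(\bigcup_{x\in V_0}W_x\setm\bigcup_{j=1}^\infty W_{x_j}\biggr)=0.
\]
Since $x\in W_x$ for every $x\in V_0$, the big union on the left equals $V_0$, so $\Cp(V_0\setm\bigcup_j W_{x_j})=0$. Combining this with $\Cp(N)=0$ and the countable subadditivity of $\Cp$ yields $\Cp(U\setm\bigcup_j W_{x_j})=0$. Setting $\B=\{W_{x_j}\}_{j=1}^\infty$ then gives all the required properties: countability, inclusion $W_{x_j}\subset V_0\subset U$, fine openness, the \p-strict property (witnessed by $v_{x_j}\in\Np_0(U)$ compactly supported in $U$), $\overline{W_{x_j}}\Subset U$, and the capacity-zero complement.

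There is essentially no hard step: everything has been prepared by Lemma~\ref{lem-finely-open-x} and Theorem~\ref{thm-quasiLindelof}. The only mildly delicate point is the transfer of the witness $v_x$ from $\Np_0(V_0)$ to $\Np_0(U)$, which is immediate from the monotonicity of $\Np_0$ in the domain and the fact that $\spt v_x$ remains compactly contained in $U$. A second minor observation worth recording is the identity $\bigcup_{x\in V_0}W_x=V_0$, which makes the quasi-Lindel\"of output directly usable to cover $V_0$ up to a capacity-zero set.
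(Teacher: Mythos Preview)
Your proof is correct and follows exactly the same route as the paper's own argument: decompose $U$ via Proposition~\ref{prop-quasiopen-char} into a finely open part plus a null set, apply Lemma~\ref{lem-finely-open-x} at each point, and extract a countable subfamily with the quasi-Lindel\"of principle. You are slightly more explicit than the paper in spelling out the inclusion $\Np_0(V_0)\subset\Np_0(U)$ and the identity $\bigcup_{x\in V_0}W_x=V_0$, but these are exactly the small observations the paper takes for granted.
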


\begin{proof} By Proposition~\ref{prop-quasiopen-char}
we can write $U=V \cup E$, where $V$ is finely open and $\Cp(E)=0$.
For every $x\in V$,
Lemma~\ref{lem-finely-open-x}
provides us with a finely open
set $V_x\ni x$ such that $V_x\Subset V$
and the corresponding function $v_x\in\Np_0(V)$
has compact support in $V$.
The collection $\B'=\{V_x\}_{x\in V}$ covers $V$
and by the quasi-Lindel\"of principle (Theorem~\ref{thm-quasiLindelof}),
and the fact that $\Cp(E)=0$,
there exists a countable subcollection $\B \subset \B'$ such that
$\Cp(U\setm\bigcup_{V_x\in\B}V_x)=0$.
\end{proof}

\begin{lem} \label{lem-exist-quasicover}
The set $E$ has a quasicovering if and only if it  is quasiopen.
\end{lem}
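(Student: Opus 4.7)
The plan is to prove the two implications separately, relying heavily on machinery already developed in the paper.

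For the forward direction, suppose $E$ is quasiopen. Proposition~\ref{prop-strict-quasicover} (just established) already produces a \p-strict quasicovering $\B$ of $E$ consisting of finely open sets. Since every finely open set is quasiopen (this is part of the equivalence \ref{b-1}$\Leftrightarrow$\ref{b-2} in Proposition~\ref{prop-quasiopen-char}, or one of the Bj\"orn--Bj\"orn--Latvala results cited there), $\B$ is in particular a quasicovering in the sense of our definition. So this direction is essentially immediate.

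For the converse, assume $E$ admits a quasicovering $\B = \{V_j\}_{j=1}^\infty$. Set $U := \bigcup_{j=1}^\infty V_j \subset E$. Each $V_j$ is quasiopen, and the excerpt explicitly notes (after Definition~\ref{deff-q-cont}) that countable unions of quasiopen sets are quasiopen; hence $U$ is quasiopen. By the definition of a quasicovering, $\Cp(E \setm U) = 0$. Apply the implication \ref{b-1}$\imp$\ref{b-2} of Proposition~\ref{prop-quasiopen-char} to $U$ to obtain a decomposition $U = V_0 \cup N_0$ with $V_0$ finely open and $\Cp(N_0) = 0$. Then
\[
   E = V_0 \cup \bigl( N_0 \cup (E \setm U) \bigr),
\]
where the second set has capacity zero by the countable subadditivity of $\Cp$. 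The implication \ref{b-2}$\imp$\ref{b-1} of Proposition~\ref{prop-quasiopen-char} now gives that $E$ is quasiopen.

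Since both directions reduce to invoking the characterization of quasiopen sets in Proposition~\ref{prop-quasiopen-char} together with the stability of quasiopenness under countable unions, no real obstacle arises; the statement is essentially a repackaging of what is already proved. The only mild subtlety is noticing that we must use the decomposition of $U$ into a finely open set plus a null set \emph{before} absorbing the additional null set $E\setm U$, since otherwise we would need outer regularity of capacity (which is available but unnecessary here).
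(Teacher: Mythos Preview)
Your argument is correct, but both directions take a more elaborate route than the paper does. For the implication ``quasiopen $\Rightarrow$ has a quasicovering'' the paper simply observes that $\{E\}$ itself is a quasicovering of $E$; invoking Proposition~\ref{prop-strict-quasicover} works but is unnecessary. For the converse, the paper also forms $U=\bigcup_j V_j$ and notes it is quasiopen, but then concludes directly: since $\Cp(E\setm U)=0$ and $\Cp$ is an outer capacity (Corollary~1.3 in \cite{BBS5} or Theorem~5.31 in \cite{BBbook}), adding a set of zero capacity to a quasiopen set keeps it quasiopen. Your detour through the decomposition \ref{b-1}$\Leftrightarrow$\ref{b-2} of Proposition~\ref{prop-quasiopen-char} avoids the outer-capacity property, as you note, at the cost of invoking a deeper equivalence (whose proof in \cite{BBL1}, \cite{BBL2} is considerably harder than outer regularity). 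So the trade-off is: the paper's proof is shorter and uses only the elementary outer-capacity fact, while yours is self-contained relative to Proposition~\ref{prop-quasiopen-char} but leans on heavier machinery behind the scenes.
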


\begin{proof}
If $E$ is quasiopen, then obviously $\{E\}$ is a quasicovering of $E$.

Conversely, if $E$ has a quasicovering $\{U_j\}_{j=1}^\infty$, then
$U=\bigcup_{j=1}^\infty U_j$ is quasiopen, by the countable
subadditivity of $\Cp$.
As $\Cp(E \setm U)=0$ and
$\Cp$ is an outer capacity,
by Corollary~1.3 in Bj\"orn--Bj\"orn--Shan\-mu\-ga\-lin\-gam~\cite{BBS5}
(or Theorem~5.31 in \cite{BBbook}),
also $E$ is quasiopen.
\end{proof}

We now turn to the different local Newtonian and Dirichlet spaces.
Recall that $\Nploc(A)$
and $\Dploc(A)$ were already defined in Section~\ref{sect-prelim}.

\begin{deff} \label{deff-Npploc}
Let $A$ be measurable.
We  say that

\begin{enumerate}
\renewcommand{\theenumi}{\textup{(\roman{enumi})}}%
\item
$u \in \Npploc(A)$
if $u \in \Np(V)$
for all finely open \p-strict subsets $V \Subset A$\textup{;}
\item
 $u \in \Npqloc(A)$
if
there is a quasicovering $\B$ of $A$ such that
$u \in \Np(U)$
for all $U \in \B$.
\end{enumerate}

\vskip-\bigskipamount 

We similarly define $\Dpploc(A)$, $\Dpqloc(A)$, $\Lpploc(A)$ and $\Lpqloc(A)$.
\end{deff}

Note that in view of Lemma~\ref{lem-exist-quasicover}
the definitions of $\Npqloc(A)$ and  $\Dpqloc(A)$ are
useful only when $A$ is quasiopen, otherwise
$\Npqloc(A)= \Dpqloc(A) = \emptyset$.

Let us take a moment to
study these spaces and compare them to $\Nploc(A)$ and $\Dploc(A)$.
The following lemma belongs to folklore, but as it is used several times,
we state it here and include the short proof.
Together with Proposition~\ref{prop-quasiopen-char}
it implies that in the definition of $\Npploc(A)$ (or $\Dpploc(A)$)
one can equivalently require that
$u \in \Np(U)$ (or $u \in \Dp(U)$)
for all quasiopen \p-strict subsets $U \Subset A$.
It can also be seen as a motivation for the definitions
of $\Npqloc(A)$  and $\Dpqloc(A)$.

\begin{lem} \label{lem-quasiopen-char}
Let $E\subset A$ be measurable sets such that $\Cp(A\setm E)=0$.
If $u:A\to\eR$, then $g:A\to[0,\infty]$
is a \p-weak upper gradient of $u$ with respect to $A$
if and only if it is a \p-weak upper gradient with respect to $E$.
In particular, $\Np(A)=\Np(E)$ with equal norms and equal minimal \p-weak upper
gradients.

Similar statements hold for $\Dp$, $\Npqloc$  and $\Dpqloc$.
\end{lem}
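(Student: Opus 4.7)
The plan is to reduce everything to the standard fact that whenever $\Cp(F)=0$, the family of nonconstant compact rectifiable curves in $X$ that meet $F$ has zero \p-modulus (e.g.\ Proposition~1.48 in~\cite{BBbook}). Applied to $F=A\setminus E$, this provides a single exceptional family $\Gamma_0$ of zero \p-modulus outside of which every curve in $A$ lies entirely in $E$. Note also that $\Cp(A\setm E)=0$ forces $\mu(A\setm E)=0$, so integrating over $A$ and over $E$ give the same value.

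For the equivalence of weak upper gradients I would argue as follows. Every curve in $E$ is in particular a curve in $A$, so if $g$ is a \p-weak upper gradient of $u$ on $A$ with exceptional family $\Gamma_1$, then $g|_E$ fails~\eqref{ug-cond} on $E$ only along a subfamily of $\Gamma_1$, hence $g|_E$ is a \p-weak upper gradient on $E$. Conversely, given a \p-weak upper gradient $g|_E$ on $E$ with exceptional family $\Gamma_2$ (of zero \p-modulus in $E$, and therefore also in $X$ since $\Modp$ is monotone in the ambient space when evaluated on the same curves), every curve in $A\setm(\Gamma_0\cup\Gamma_2)$ stays in $E$ and satisfies~\eqref{ug-cond}; hence $g$ is a \p-weak upper gradient on $A$. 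Combined with $\int_A|u|^p\,\dmu=\int_E|u|^p\,\dmu$, this gives $\|u\|_{\Np(A)}=\|u\|_{\Np(E)}$, and it yields $g_{u,A}=g_{u,E}$ a.e.\ on $E$ by comparing minimality on each side. The same reasoning without the $L^p$ term handles $\Dp$.

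For $\Npqloc$ and $\Dpqloc$ I would transport quasicoverings between $A$ and $E$. If $\B$ is a quasicovering of $A$ witnessing $u\in\Npqloc(A)$, then $\B':=\{U\cap E:U\in\B\}$ is a quasicovering of $E$: each $U\cap E$ is quasiopen because it differs from the quasiopen set $U$ by $U\setm E\subset A\setm E$, a set of capacity zero (Proposition~\ref{prop-quasiopen-char}); and
\[
E\setminus\bigcup_{U\in\B}(U\cap E)=E\setminus\bigcup_{U\in\B}U\subset A\setminus\bigcup_{U\in\B}U
\]
has capacity zero. Applying the first part of the lemma to the pair $U\cap E\subset U$ yields $u\in\Np(U\cap E)$, so $u\in\Npqloc(E)$. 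Conversely, any quasicovering of $E$ is also a quasicovering of $A$, since $A\setminus\bigcup U\subset(A\setm E)\cup(E\setm\bigcup U)$ is of capacity zero.

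No step presents a real obstacle; the only care needed is the bookkeeping with the two types of curve families (those in $A$ and those in $E$) and, in the last paragraph, the verification that the intersection of a quasiopen set with $E$ remains quasiopen, for which one invokes the invariance of quasiopenness under addition or removal of capacity zero sets.
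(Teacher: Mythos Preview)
Your argument is correct and follows essentially the same route as the paper: both invoke Proposition~1.48 in~\cite{BBbook} to see that $\mu(A\setm E)=0$ and that \p-almost every curve in $A$ avoids $A\setm E$, and both transport quasicoverings between $A$ and $E$ in the same way (intersecting with $E$ in one direction, using that a quasicovering of $E$ is automatically one of $A$ in the other). The only cosmetic difference is that you cite Proposition~\ref{prop-quasiopen-char} for the quasiopenness of $U\cap E$, whereas the paper treats this as evident; either way the point is that removing a set of capacity zero from a quasiopen set leaves it quasiopen, which follows at once from the outer regularity of $\Cp$.
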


Note that the corresponding statements for
$\Nploc$, $\Dploc$, $\Npploc$ and $\Dpploc$ are false,
as seen by e.g.\ letting $A$ be the unit ball in $\R^n$ with $n \ge p$
and $E=A \setm \{0\}$.

\begin{proof}
Clearly, $\Np(A)\subset\Np(E)$ and every \p-weak upper gradient with respect
to $A$ is a \p-weak upper gradient with respect to $E$.

Conversely, let $g$ be a \p-weak upper gradient of $u$ in $E$.
Proposition~1.48 in \cite{BBbook} shows that
$\mu(A \setm E)=0$ and that \p-almost every curve in $A$ avoids $A \setm E$.
From the latter
we immediately see that $g$ is also a \p-weak upper gradient
in $A$.
As $\mu(A \setm E)=0$,  the equality
of the $\Np$-spaces and their norms follows.

Since a quasicovering of $E$ is automatically a quasicovering of $A$,
we immediately have that $\Npqloc(E) \subset \Npqloc(A)$.
Conversely, if $u \in \Npqloc(A)$ then there is a quasicovering
$\{U_j\}_{j=1}^\infty$ of $A$ such that $u \in \Np(U_j)$ for each $j$.
As $\Cp(U_j \setm E)=0$, we see that $U_j \cap E$ is quasiopen,
and thus $\{U_j \cap E\}_{j=1}^\infty$
is a quasicovering of $E$.
Since $u \in \Np(U_j \cap E)$ for all $j$,
we obtain that $u \in \Npqloc(E)$.

The proofs for $\Dp$  and $\Dpqloc$ are similar.
\end{proof}

\begin{prop} \label{prop-Nploc=>Npploc}
\begin{enumerate}
\renewcommand{\theenumi}{\textup{(\roman{enumi})}}%
\item \label{j-meas}
If $A$ is  measurable, then
$\Nploc(A) \subset \Npploc(A)$.
\item \label{j-quasiopen}
If $U$ is quasiopen, then $\Nploc(U) \subset \Npploc(U) \subset \Npqloc(U)$.
\item \label{j-open}
If $\Om$ is  open, then $\Nploc(\Om)= \Npploc(\Om)  \subset \Npqloc(\Om)$.
\item \label{j-open-qopen}
If $\Om$ is  open and all quasiopen subsets of $\Om$ are open,
then
\begin{equation} \label{Nploc=Npqloc}
   \Nploc(\Om)= \Npploc(\Om)  = \Npqloc(\Om).
\end{equation}
\end{enumerate}
Similar statements hold for $\Dp$. Moreover,
if $\Om$ is open,
then
\begin{equation} \label{Nploc=Dploc}
   \Nploc(\Om)= \Npploc(\Om)=\Dploc(\Om)= \Dpploc(\Om).
\end{equation}
\end{prop}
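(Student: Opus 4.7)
For \ref{j-meas} I exploit compactness of $\overline V$: for each $x\in\overline V$ there is a ball $B_x\ni x$ with $u\in\Np(B_x\cap A)$, and finitely many such balls $B_1,\dots,B_k$ cover $\overline V$. Setting $g:=\max_j g_{u,B_j\cap A}$ on $V$ gives a function in $L^p(V)$ bounded by the finite sum $\sum_j\int_{B_j\cap A}g_{u,B_j\cap A}^p\,\dmu<\infty$. For \p-almost every curve $\gamma\subset V$, the Lebesgue-number lemma applied to the open cover $\{\gamma^{-1}(B_j)\}$ of $[0,l_\gamma]$ decomposes $\gamma$ into finitely many subcurves, each contained in some $B_{j_i}\cap A$; summing the upper-gradient inequalities there shows that $g$ is a \p-weak upper gradient of $u$ on $V$, hence $u\in\Np(V)$. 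Parts \ref{j-quasiopen} and \ref{j-open} then follow quickly: the inclusion $\Npploc(U)\subset\Npqloc(U)$ in \ref{j-quasiopen} uses Proposition~\ref{prop-strict-quasicover} to supply a \p-strict quasicovering of $U$ by finely open subsets $V\Subset U$; the reverse inclusion $\Npploc(\Om)\subset\Nploc(\Om)$ in \ref{j-open} uses that when $\Om$ is open, any ball $B_x$ with $\overline{B_x}\subset\Om$ is open (hence finely open) and a Lipschitz cutoff in $\Np_0(\Om)$ equal to $1$ on $\overline{B_x}$ certifies it as a \p-strict subset of $\Om$; the further inclusion in $\Npqloc(\Om)$ comes by extracting a countable subcover of such balls via separability of $X$. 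All arguments transfer to $\Dp$ verbatim.

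For \ref{j-open-qopen}, combining with \ref{j-open} reduces the claim to $\Npqloc(\Om)\subset\Nploc(\Om)$. Let $u\in\Npqloc(\Om)$ have quasicovering $\B=\{U_j\}$; by hypothesis every $U_j$ is open. The set $E:=\Om\setm\bigcup_jU_j$ has $\Cp(E)=0$ and by Proposition~\ref{prop-quasiopen-char}\ref{b-2} is quasiopen (written as the union of the finely open $\varnothing$ and the capacity-zero $E$), hence open by hypothesis. Under the standing assumptions every nonempty open set contains a ball of positive $\mu$-measure and therefore of positive $\Cp$, forcing $E=\varnothing$. Consequently each $x\in\Om$ lies in some open $U_j$; a ball $B_x\subset U_j$ gives $u\in\Np(B_x)$, and $u\in\Nploc(\Om)$. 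The $\Dp$-version of \ref{j-open-qopen} is obtained by the same argument.

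In \eqref{Nploc=Dploc} the inclusions $\Nploc(\Om)\subset\Dploc(\Om)$ and $\Npploc(\Om)\subset\Dpploc(\Om)$ are immediate, so by \ref{j-open} and its $\Dp$-analogue it suffices to show $\Dploc(\Om)\subset\Nploc(\Om)$. Given $u\in\Dp(B)$ on a ball $B$ with $\overline B\subset\Om$, Proposition~\ref{prop-Dp=infty} yields $u$ finite q.e.\ on $B$, and the \p-Poincar\'e inequality together with doubling gives the standard oscillation estimate $\|u-c\|_{L^p(B')}\le C\diam(B)\|g\|_{L^p(B)}$ on a concentric smaller ball $B'$, with $c$ taken as an average of $u$ over a set of positive measure on which $|u|$ is bounded; hence $u\in\Np(B')$ and $u\in\Nploc(\Om)$. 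The principal difficulty in the whole proof is the patching argument underlying \ref{j-meas}, since the set $\bigcup_j(B_j\cap A)$ is not open: the upper-gradient inequality must be checked curve by curve through the Lebesgue-number argument, and one must also note that the finitely many \p-modulus-zero exceptional curve families from the balls $B_j\cap A$ only contribute a \p-modulus-zero family of curves in $V$.
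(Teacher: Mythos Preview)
Your proof is correct and follows the same overall architecture as the paper's: compactness and a finite ball cover for \ref{j-meas}, the \p-strict quasicovering from Proposition~\ref{prop-strict-quasicover} for \ref{j-quasiopen}, balls as \p-strict subsets for \ref{j-open}, and the hypothesis ``quasiopen $\Rightarrow$ open'' for \ref{j-open-qopen}. Two execution details differ. In \ref{j-meas} you do the patching by hand, via the Lebesgue-number decomposition and the subcurve property of the \p-modulus; the paper instead invokes Lemma~\ref{lem-minimal-restrict} to bound $\|f\|_{\Np(\overline V)}$ by the sum of the norms on the pieces, which is terser but relies on the restriction behaviour of minimal gradients. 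In \ref{j-open-qopen} you argue that the residual set $E=\Om\setm\bigcup_j U_j$ is itself quasiopen (being of zero capacity), hence open by hypothesis, and then empty because nonempty open sets have positive capacity; the paper instead picks a single point $x\in E$, observes that $\{x\}$ is quasiopen (via outer capacity) and hence open, and derives a contradiction from the connectedness of $X$ (which uses the Poincar\'e inequality). Your route here avoids the appeal to connectedness. For \eqref{Nploc=Dploc} the paper simply quotes the identity $\Nploc(\Om)=\Dploc(\Om)$ from \cite{BBbook}, whereas you sketch the underlying Poincar\'e argument; your sketch is a bit informal (the choice of the constant $c$ and the passage to the $L^p$ estimate need the usual truncation trick), but the idea is the standard one.
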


Note that in general $\Nploc(\Om) \ne \Npqloc(\Om)$ even when $\Om$ is
open. Let e.g.\ $\Om$ be the unit ball $B$ in $\R^n$ with
$n \ge p$. Then any function $u \in \Nploc(B \setm \{0\})$ belongs to
$\Npqloc(B)$, but not all such functions belong to $\Nploc(B)$.

\begin{openprob}
Is the first inclusion in \ref{j-quasiopen} sometimes strict?
\end{openprob}

\begin{openprob} \label{openprob-DplocU}
Is $\Nploc(U)=\Dploc(U)$
and/or $\Npploc(U)=\Dpploc(U)$
if $U$ is quasiopen?
\end{openprob}

We will later show that
$\Npqloc(U)=\Dpqloc(U)$ if $U$ is quasiopen,
see Corollary~\ref{cor-Npqloc=Dpqloc}.
Our proof of this equality is quite involved, and it would be interesting
to know if it can be deduced more easily.
(If $U$ is not quasiopen, then $\Npqloc(U)=\Dpqloc(U)=\emptyset$.)

\begin{openprob}
Another open problem, related to 
Proposition~\ref{prop-Nploc=>Npploc}\,\ref{j-open-qopen}, is whether
all quasiopen sets are open if and only if
the capacity of every point is positive.
One implication is clear: if $\Cp(\{x\})=0$, then $\{x\}$ is a quasiopen nonopen
set, see the proof below.
\end{openprob}

A positive answer to this last question would
lead to a positive answer to Open problem~5.38
in \cite{BBbook}.

\begin{proof}[Proof of Proposition~\ref{prop-Nploc=>Npploc}]
\ref{j-meas}
 Let $f \in \Nploc(A)$ and let $V \Subset A$ be a
finely open \p-strict subset.
For every $x \in \overline{V}$ there is $r_x>0$ such that $f \in \Np(B(x,r_x) \cap A)$.
Since $\overline{V}$ is compact there is a finite subcover
$\{B(x_j,r_{x_j}) \cap A\}_{j=1}^m$ such that
$\overline{V} \subset \bigcup_{j=1}^m B(x_j,r_{x_j}) \cap A$.
It follows from Lemma~\ref{lem-minimal-restrict}
that $\|f\|_{\Np(\overline{V})}^p \le \sum_{j=1}^m
      \|f\|_{\Np(B(x_j,r_{x_j}) \cap A)}^p < \infty$, and thus
     $f \in \Np(V)$.

\ref{j-quasiopen}
The first inclusion was established in  \ref{j-meas},
so assume that $f \in \Npploc(U)$.
By Proposition~\ref{prop-strict-quasicover}, $U$ has a
\p-strict quasicovering $\{V_j\}_{j=1}^\infty$ consisting of
finely open \p-strict subsets $V_j$ such that $\overline{V}_j \Subset U$.
By assumption, $f \in \Np(V_j)$ for each $j$, and thus $f \in \Npqloc(U)$.

\ref{j-open}
 Let $f \in \Npploc(\Om)$ and $x \in \Om$.
Then there is $r_x$ such that $B(x,r_x) \Subset \Om$.
It is straightforward to see that $B(x,r_x)$ is a \p-strict
subset of $\Om$, and thus
$f \in \Np(B(x,r_x))$.
Hence $f \in \Nploc(\Om)$.
The remaining inclusions follow
from \ref{j-quasiopen}.

\ref{j-open-qopen}
Let $f \in \Npqloc(\Om)$.
Then there is a quasicovering $\{U_j\}_{j=1}^\infty$ of $\Om$
so that $f \in \Np(U_j)$.
We shall
show that $\Om =G:=\bigcup_{j=1}^\infty U_j$.
Assume on the contrary that there is
a point $ x\in \Om \setm G$.
As $\Cp(\{x\}) \le \Cp(\Om \setm G)=0$
and $\Cp$ is an outer capacity
(by Bj\"orn--Bj\"orn--Shan\-mu\-ga\-lin\-gam~\cite[Corollary~1.3]{BBS5}
or \cite[Theorem~5.31]{BBbook}),
$\{x\}$ is quasiopen.
But since $X$ is connected (which follows from the Poincar\'e inequality,
see Proposition~4.2 in \cite{BBbook}),
$\{x\}$ is not open, a contradiction.
Hence $\Om=G$, and thus for each $x \in \Om$ there is $j$ such
that $x \in U_j$ and $f \in \Np(U_j)$.
By assumption $U_j$ is open, and hence
$f \in \Nploc(\Om)$.
The other inclusions in
\eqref{Nploc=Npqloc} follow from \ref{j-open}.

The corresponding results for $\Dp$ are proved similarly.
Finally, if $\Om$ is open, then
$\Nploc(\Om)=\Dploc(\Om)$, by
Proposition~4.14 and Corollary~4.24 in \cite{BBbook}, and hence
 \eqref{Nploc=Dploc} follows from \ref{j-open} and the corresponding
result for $\Dp$.
\end{proof}

\begin{example}
One may ask if the requirement $V \Subset A$ in the definition of $\Npploc(A)$
is essential, or stated in another way:
If $u \in \Npploc(A)$ and $V$ is a finely open \p-strict subset of $A$,
does it then follow that $u \in \Np(V)$?
This may fail even for open $A=\Om$ and open \p-strict subsets $V$ of $\Om$.
Indeed, let
$1<p<2$,
\[
   \Om=(-1,1) \times (0,1) \subset \R^2
   \quad \text{and} \quad
  V=\bigl\{(x_1,x_2) \in \Om : |x_1| < x_2 < \tfrac{1}{6}\bigr\}.
\]
Then $V$ is an open \p-strict subset of $\Om$,
by Example~5.7 in Bj\"orn--Bj\"orn~\cite{BB}
(or Example~11.10 in \cite{BBbook})
as $f$ therein belongs to $\Np_0(\Om)$ and $f=1$ on $V$.
Let $u(x)=|x|^\be$ with $\be \le -2/p$.
Then, $u \in \Nploc(\Om)=\Npploc(\Om)$
(cf.\  Proposition~\ref{prop-Nploc=>Npploc}\,\ref{j-open}),
but $u \notin L^p(V)$ and thus $u \notin \Np(V)$.
\end{example}

On a finely open set $V$ yet another local Newtonian space may be considered:
$u \in \Npfloc(V)$ if for every $x \in V$ there is a finely open set
$V_x \subset V$ such that $x \in V_x$ and $u \in \Np(V_x)$.
By Lemma~\ref{lem-finely-open-x}, one can equivalently require that
$V_x \Subset V$.
Lemma~\ref{lem-finely-open-x} and the
quasi-Lindel\"of principle (Theorem~\ref{thm-quasiLindelof})
immediately imply that
\begin{equation} \label{eq-Npfloc}
   \Npploc(V) \subset \Npfloc(V) \subset \Npqloc(V).
\end{equation}
It is natural to ask if these inclusions can be strict.
Indeed this is so even if $V$ is open,
as we show in the following examples.

\begin{example}
To see that the first inclusion in \eqref{eq-Npfloc} can be strict,
let $V=B$ be the unit ball in $\R^3$, $p=2$ and let
\[
   E=\{(x,t) \in \R^2 \times \R : t>0 \text{ and }
       |x| <  e^{-1/t} \}
\]
be the Lebesgue spine, which is thin at the origin
 (see e.g.\ Example~13.4 in \cite{BBbook}).
Let
\[
u(x,t) =\biggl( \frac{e^{2/t}}{t} \biggr)^{1/p} \phi(x,t),
\]
where $\phi\in C^\infty(\R^3 \setm \{0\})$ is such that
\[
    \phi(x,t)=\begin{cases}
      1, & \text{if } |x| <\tfrac{1}{3}e^{-1/t}, \\
      0, & \text{if } |x| >\tfrac{2}{3}e^{-1/t},
      \end{cases}
\]
and we define $u$ arbitrarily at the origin.

Then $u \in \Nploc(\R^3 \setm \{0\})$
and it is easily verified that $u\notin L^p(B)$.
As $\Cp(\{0\})=0$, $u=0$ in $B\setm\itoverline{E}$
and
$(B \setm \itoverline{E}) \cup \{0\}$
is a finely open set containing $0$,
it is still true
that $u \in \Npfloc(B)$, but $u \notin \Nploc(B)=\Npploc(B)$
(by Proposition~\ref{prop-Nploc=>Npploc}\,\ref{j-open}).
\end{example}

\begin{example}
To see that the last inclusion in \eqref{eq-Npfloc} can be strict,
we let $V=B$ be  the unit ball in $\R^n$ with $1 < p \le n$,
and let $u(x)=|x|^\be$ with $\be  \le -n/p$.
Let $V_0$ be any finely open set containing $0$.
Then $0$ is a density point of $V_0$, by Corollary~2.51 in
Mal\'y--Ziemer~\cite{MZ}.
Thus
there are $r_0,\theta>0$ such
that for all $0<r<r_0$,
\begin{equation} \label{eq-density}
    \Lambda_1\bigl( \bigl\{\rho : 0 < \rho < r
      \text{ and } \Lambda_{n-1}(\{x \in V_0 : |x|=\rho\})
       >\theta\rho^{n-1}\bigr\}\bigr)
      > \tfrac{1}{2}r,
\end{equation}
where $\Lambda_d$ denotes $d$-dimensional Lebesgue measure.
Using polar coordinates it is easy to see that $u \notin L^p(B)$.
It then follows from \eqref{eq-density} that $u \notin L^p(V_0)$,
and hence $u \notin \Npfloc(B)$.
On the other hand, as $\Cp(\{0\})=0$, we see that
$u \in \Nploc(B \setm \{0\}) \subset \Npqloc(B \setm \{0\}) = \Npqloc(B)$,
by Lemma~\ref{lem-quasiopen-char}.
\end{example}

\section{Quasicontinuity}
\label{sect-quasicontinuity}

Our aim in this section is to prove Theorem~\ref{thm-Nploc-intro}.
In fact, we prove the following generalization for local Dirichlet spaces.

\begin{thm} \label{thm-Nploc}
Let $U$ be quasiopen and assume that $u \in \Dpqloc(U)$.
Then $u$ is finite q.e.\  and
finely continuous q.e.\ in $U$.
In particular, $u$ is quasicontinuous in $U$.
\end{thm}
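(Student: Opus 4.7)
The plan is to reduce everything to functions in $\Np(X)$, for which the analogous fine-continuity and quasicontinuity statements are already known, and then patch via the quasi-Lindel\"of principle. Let $\{U_j\}_{j=1}^\infty$ be a quasicovering of $U$ with $u\in\Dp(U_j)$ for each $j$. By Lemma~\ref{lem-quasiopen-ppath-meas} each $U_j$ is \p-path open and measurable, so Proposition~\ref{prop-Dp=infty} applied on $U_j$ gives $\Cp(\{x\in U_j:|u(x)|=\infty\})=0$; combined with $\Cp(U\setminus\bigcup_j U_j)=0$ and countable subadditivity of $\Cp$, this proves finiteness q.e.\ in $U$.

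For fine continuity, use Proposition~\ref{prop-quasiopen-char} to write $U=V\cup E$ with $V$ finely open and $\Cp(E)=0$, so it suffices to work in $V$. Applying Proposition~\ref{prop-strict-quasicover} to each $U_j$ produces a countable family of finely open sets $W_{j,\ell}$ and cutoff functions $v_{j,\ell}\in\Np_0(U_j)$ with $0\le v_{j,\ell}\le 1$, $v_{j,\ell}\equiv 1$ on $W_{j,\ell}$, $\spt v_{j,\ell}$ compact and contained in $U_j$, and $\Cp(U_j\setminus\bigcup_\ell W_{j,\ell})=0$. The countable family $\{W_{j,\ell}\}_{j,\ell}$ then covers $U$ up to a set of capacity zero, and each $W_{j,\ell}$ sits inside a $U_j$ where $u$ already has an $L^p$ upper gradient.

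The central construction is: for each integer $m\ge 1$ set $u_m:=\max(-m,\min(u,m))$ and define
\[
   f_{j,\ell,m}:=u_m v_{j,\ell},
\]
extended by $0$ outside $U_j$. The key claim is that $f_{j,\ell,m}\in\Np_0(U_j)\subset\Np(X)$. Membership in $L^p(X)$ is immediate since $|f_{j,\ell,m}|\le m\chi_{\spt v_{j,\ell}}$. For the upper gradient, the product rule on $U_j$ (using $|u_m|\le m$, Lemma~\ref{lem-minimal-restrict} and boundedness of $v_{j,\ell}$) yields the candidate $g:=mg_{v_{j,\ell}}+g_{u,U_j}\chi_{\spt v_{j,\ell}}\in L^p(X)$. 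Verifying that $g$ is a \p-weak upper gradient of $f_{j,\ell,m}$ on all of $X$ is the main technical obstacle: for \p-almost every curve $\gamma$ in $X$, $\gamma^{-1}(U_j)$ is a countable union of open intervals (Lemma~\ref{lem-quasiopen-ppath-meas}) along each of which $f_{j,\ell,m}\circ\gamma$ is absolutely continuous with the stated upper gradient, and at the endpoints of these intervals $v_{j,\ell}\circ\gamma\to 0$ since $v_{j,\ell}\in\Np_0(U_j)\subset \Np(X)$ is absolutely continuous along \p-a.e.\ curve and vanishes outside $U_j$; a telescoping of the interval-wise bounds then yields the global upper-gradient inequality.

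Once $f_{j,\ell,m}\in\Np(X)$ is established, J.~Bj\"orn~\cite{JB-pfine} and Korte~\cite{korte08} give that each $f_{j,\ell,m}$ is finely continuous q.e.\ in $X$, and Bj\"orn--Bj\"orn--Shanmugalingam~\cite{BBS5} gives that it is quasicontinuous in $X$. Since $f_{j,\ell,m}=u_m$ on $W_{j,\ell}$, a countable union over $(j,\ell,m)$, together with the capacity-zero sets from the finiteness step and $V\setminus\bigcup_{j,\ell}W_{j,\ell}$, yields $F\subset V$ with $\Cp(F)=0$ such that for every $x\in V\setminus F$ the value $u(x)$ is finite and every $u_m$ is finely continuous at $x$; choosing $m>|u(x)|+1$ gives a fine neighbourhood of $x$ on which $u_m\equiv u$, so $u$ is finely continuous at $x$. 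For the ``in particular'' statement, given $\varepsilon>0$ one sums weighted open exceptional sets $G_{j,\ell,m}$ (of capacity $<\varepsilon 2^{-(j+\ell+m)}$) outside which the $f_{j,\ell,m}$ are continuous, together with open neighbourhoods of the capacity-zero exceptional set, of $V\setminus\bigcup_{j,\ell}W_{j,\ell}$, and of $X\setminus U$ (possible by quasiopenness of $U$ and each $W_{j,\ell}$); on the complement of the resulting open set $G$ of total capacity $<C\varepsilon$, each point lies in some $W_{j,\ell}$ with $u=f_{j,\ell,m}$ there for some $m$, and the compatibility on overlaps (forced by $u$ being single-valued) yields continuity of $u|_{U\setminus G}$.
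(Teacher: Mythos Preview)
Your argument is correct and follows the same overall strategy as the paper's: truncate, multiply by compactly supported cutoffs from a \p-strict quasicovering to land in $\Np(X)$, and then transfer the known fine-continuity and quasicontinuity of $\Np(X)$-functions back to $u$. The execution differs at two points. First, where the paper invokes the product lemma (Lemma~\ref{lem-product}) together with the domination criterion \cite[Lemma~2.37]{BBbook} to place $u_m v_{j,\ell}$ in $\Np_0(U_j)\subset\Np(X)$ (after first arranging the $U_j$ to be bounded so that $u_m\in\Np(U_j)$), you verify the global upper-gradient inequality along curves by hand, exploiting the \p-path openness of $U_j$ and the absolute continuity of $v_{j,\ell}\circ\gamma$ to handle the interval endpoints. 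Second, for the final quasicontinuity conclusion the paper simply cites \cite[Theorem~1.4(b)]{BBL2} (finite and finely continuous q.e.\ on a quasiopen set implies quasicontinuous), whereas you assemble the exceptional open set $G$ directly from the quasicontinuity of each $f_{j,\ell,m}$ and the quasiopenness of each $W_{j,\ell}$. The paper's route is considerably shorter; yours is more explicit and shows that the passage from fine continuity to quasicontinuity can be carried out at the level of the patches without appealing to the general result in~\cite{BBL2}. Both your curve-telescoping step and your patching-to-continuity step are correct as sketched, though each would need a little more care (summing over possibly infinitely many subintervals in the first, and checking that $W_{j,\ell}\setm G$ is relatively open in $X\setm G$ in the second) to be fully airtight.
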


For open $U$, quasicontinuity
was deduced in Bj\"orn--Bj\"orn--Shan\-mu\-ga\-lin\-gam~\cite[Theorem~1.1]{BBS5}
for $u \in \Nploc(\Om)$.
To prove Theorem~\ref{thm-Nploc} we will use the following two lemmas.
See Bj\"orn--Bj\"orn--Latvala~\cite{BBL2} for a proof of the first one.

\begin{lem} \label{lem-product}
Let $Y$ be a metric space equipped with a measure which
is bounded on balls.
If $u,v \in \Np(Y)$ are bounded,
then $uv \in \Np(Y)$.
\end{lem}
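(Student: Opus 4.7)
The plan is to verify the three requirements for $uv \in \Np(Y)$: measurability, $L^p$-integrability, and the existence of an upper gradient in $L^p(Y)$.

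First, measurability of $uv$ is immediate from the measurability of $u$ and $v$. For the $L^p$ estimate, let $M_u=\sup_Y|u|$ and $M_v=\sup_Y|v|$, both finite by hypothesis. Then $|uv|\le M_v|u|$, so $uv\in L^p(Y)$ with $\|uv\|_{L^p(Y)}\le M_v\|u\|_{L^p(Y)}$.

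The heart of the argument is to produce an upper gradient of $uv$ in $L^p(Y)$. Let $g_u,g_v\in L^p(Y)$ be the minimal \p-weak upper gradients of $u$ and $v$, and set
\[
g:=M_u\, g_v+M_v\, g_u.
\]
I claim that $g$ is a \p-weak upper gradient of $uv$. For this, consider the algebraic identity
\[
u(\ga(0))v(\ga(0))-u(\ga(l_\ga))v(\ga(l_\ga))
=u(\ga(0))\bigl(v(\ga(0))-v(\ga(l_\ga))\bigr)+v(\ga(l_\ga))\bigl(u(\ga(0))-u(\ga(l_\ga))\bigr).
\]
Boundedness of $u$ and $v$ ensures that all values on the right are finite and bounded by $M_u$ and $M_v$ respectively, so applying the triangle inequality together with the \p-weak upper gradient inequalities for $u$ and $v$ yields
\[
|u(\ga(0))v(\ga(0))-u(\ga(l_\ga))v(\ga(l_\ga))|\le M_u\int_\ga g_v\,ds+M_v\int_\ga g_u\,ds=\int_\ga g\,ds
\]
for \p-almost every rectifiable curve $\ga$ (namely, for every curve on which both defining inequalities hold, and the exceptional set is the union of two curve families of zero \p-modulus, hence itself of zero \p-modulus). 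By the triangle inequality in $L^p$, $g\in L^p(Y)$.

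Finally, since $uv$ possesses a \p-weak upper gradient in $L^p(Y)$, the Koskela--MacManus approximation (recalled in Section~\ref{sect-prelim}) provides actual upper gradients of $uv$ in $L^p(Y)$, whence $\|uv\|_{\Np(Y)}<\infty$ and $uv\in\Np(Y)$. The only step where anything could go wrong is the pointwise product rule on curves, and boundedness is exactly what removes that obstacle: it both guarantees that the endpoint values $u(\ga(0)), v(\ga(l_\ga))$ are finite and replaces them by the constants $M_u, M_v$, producing a genuine $L^p$ bound for the candidate upper gradient. I do not expect any serious difficulty; the assumption that $\mu$ is bounded on balls is used only implicitly through $u,v\in L^p(Y)$.
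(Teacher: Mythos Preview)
Your argument is correct and is the standard Leibniz-rule estimate for upper gradients of a product. The paper does not actually prove this lemma in-text but simply refers to \cite{BBL2}; the proof there (and the analogous product rule in \cite{BBbook}) proceeds exactly as you do, via the identity $uv(\ga(0))-uv(\ga(l_\ga))=u(\ga(0))(v(\ga(0))-v(\ga(l_\ga)))+v(\ga(l_\ga))(u(\ga(0))-u(\ga(l_\ga)))$ and boundedness of $u,v$.
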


\begin{lem} \label{lem-finite-qe}
Let $U$ be quasiopen and $u \in \Dpqloc(U)$.
Then $u$ is finite q.e.\ in $U$.
\end{lem}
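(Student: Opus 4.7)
The plan is to reduce the claim to Proposition~\ref{prop-Dp=infty}, which already handles the case of a single measurable \p-path open set. Since $u \in \Dpqloc(U)$, by definition there is a quasicovering $\B = \{U_j\}_{j=1}^\infty$ of $U$ such that $u \in \Dp(U_j)$ for every $j$. By the definition of a quasicovering, each $U_j$ is quasiopen and $\Cp(U \setm \bigcup_{j} U_j) = 0$.

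Set $E = \{x \in U : |u(x)| = \infty\}$. I would split $E$ as
\[
   E = \Bigl( E \setm \bigcup_{j=1}^\infty U_j \Bigr) \cup \bigcup_{j=1}^\infty (E \cap U_j),
\]
so that by the countable subadditivity of $\Cp$ it suffices to show that $\Cp(E \cap U_j) = 0$ for each $j$, since the first set has capacity zero by the quasicovering property. For this, I would invoke Lemma~\ref{lem-quasiopen-ppath-meas} to note that each $U_j$ is \p-path open and measurable, and then apply Proposition~\ref{prop-Dp=infty} to $u \in \Dp(U_j)$ on the \p-path open set $U_j$. This yields $\Cp(E \cap U_j) = 0$ immediately.

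There is no real obstacle here: the work has already been done in Proposition~\ref{prop-Dp=infty}, and the only thing to verify is that its hypotheses transfer from $X$ to each member of the quasicovering, which is exactly the content of Lemma~\ref{lem-quasiopen-ppath-meas}. The lemma is essentially a bookkeeping step that propagates finiteness q.e.\ from local Dirichlet regularity on each piece of a quasicovering to the whole quasiopen set $U$, using only countable subadditivity of $\Cp$.
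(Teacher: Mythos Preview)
Your proof is correct and follows essentially the same approach as the paper: take the quasicovering, apply Lemma~\ref{lem-quasiopen-ppath-meas} and Proposition~\ref{prop-Dp=infty} on each $U_j$ to get $\Cp(E\cap U_j)=0$, and conclude by countable subadditivity together with $\Cp(U\setm\bigcup_j U_j)=0$. The paper's proof is simply a terser version of exactly this argument.
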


\begin{proof}
By assumption there is a quasicovering $\B=\{U_j\}_{j=1}^\infty$ of $U$
such that $u \in \Np(U_j)$ for each $j$.
By Lemma~\ref{lem-quasiopen-ppath-meas} and Proposition~\ref{prop-Dp=infty},
 $u$ is finite q.e.\ in $U_j$.
Since $\B$ is a quasicovering, it follows that $u$ is finite q.e.\ in $U$.
\end{proof}

\begin{proof}[Proof of Theorem~\ref{thm-Nploc}]
By assumption there is a quasicovering $\B$ of $U$ such
that $u \in \Dp(\Ut)$ for each $\Ut \in \B$.
In addition we can assume that all $\Ut \in \B$ are bounded.
By Proposition~\ref{prop-strict-quasicover},  for each
$\Ut\in\B$ there exists a \p-strict quasicovering $\B_{\Ut}$ of $\Ut$
consisting of finely open \p-strict subsets $V_{j,\Ut}$
such that $\overline{V}_{j,\Ut} \Subset \Ut$.

First, assume that $u$ is bounded and
let $V=V_{j,\Ut}$ be arbitrary.
Then there is
$v \in \Np_0(\Ut)$ with $v=1$ on $V$
and  $0 \le v \le 1$ everywhere.
Since $u$ and $\Ut$ are bounded and $u\in\Dp(\Ut)$, we get that
$u \in \Np(\Ut)$.
Let $w=v u$, extended by $0$ outside of $\Ut$.
Then $w \in \Np(\Ut)$ by Lemma~\ref{lem-product}.
As $|w|\le Cv\in\Np_0(\Ut)$
(with $C=\sup_U |u|< \infty$), it follows from Lemma~2.37 in \cite{BBbook}
that $w \in \Np_0(\Ut) \subset \Np(X)$,
and in particular $w$ is quasicontinuous in $X$.
By Theorem~4.9\,(b) in Bj\"orn--Bj\"orn--Latvala~\cite{BBL1},
$w$ is finely continuous q.e.\ in $X$.
Since $u = w$ in the finely open set $V$,
it follows that $u$ is finely continuous q.e.\ in $V$,
and thus q.e.\ in $U$, as $V$ was arbitrary.

If $u$ is arbitrary, let
$u_k=\max\{\min\{u,k\},-k\}$, $k=1,2,\ldots$,
be the truncations of $u$ at levels $\pm k$.
By the first part of the proof there is a set $E_k$ such that
$\Cp(E_k)=0$ and $u_k$ is finely continuous at all $x \in U \setm E_k$.

By Lemma~\ref{lem-finite-qe}, there is a set $E_0$ with  $\Cp(E_0)=0$ such
that $u$ is finite  in $U \setm E_0$.
Let $E=\bigcup_{j=0}^\infty E_j$, which is a set with zero capacity.
If $x \in U \setm E$, then $u(x)$ is finite and hence
there is $k$ such that $|u(x)|<k$.
Since $u_k$ is finely continuous at $x$ and $|u(x)|<k$, we
conclude that $u$ is also finely continuous at $x$.
Hence $u$ is finely continuous q.e.\ in $U$.

The quasicontinuity
of $u$ now follows from
Theorem~1.4\,(b) in Bj\"orn--Bj\"orn--Latvala~\cite{BBL2}.
\end{proof}

\begin{proof}[Proof of Theorem~\ref{thm-Nploc-intro}]
This follows directly from 
Proposition~\ref{prop-Nploc=>Npploc}\,\ref{j-quasiopen}
together with Theorem~\ref{thm-Nploc}.
\end{proof}

\section{Sobolev spaces based on fine upper gradients}
\label{sect-Sobolev}

\emph{In this section we assume that $U$ is quasiopen.}

\medskip

The main aim of this section is to prove Theorem~\ref{thm-intro-equiv-Sob}.
To do so it will be convenient to make the following definition,
for reasons that will become clear towards the end of the section.
It has been inspired by the fine gradients in $\R^n$ from
Kilpel\"ainen--Mal\'y~\cite{KiMa92}, see Definition~\ref{def-Wp}
below and the comments after it.
Recall that $g_{u,V}$ is the minimal \p-weak upper gradient of
$u:V\to\eR$ taken with respect to $V$ as the ambient space.

\begin{deff}  \label{def-fine-grad}
A function $\gt_u:U\to[0,\infty]$ is a \emph{\p-fine upper gradient}
of $u \in \Dpqloc(U)$  if there is a quasicovering $\B$ of $U$ such that
$u\in\Dp(V)$ for every $V\in\B$ and $\gt_u=g_{u,V}$ a.e.\ in $V$.
\end{deff}

The following result shows that \p-fine upper gradients
always exist.

\begin{lem} \label{lem-fine-gr-upper-gr}
If $u \in \Dpqloc(U)$,
then it has a unique \p-fine upper gradient $\gt_u$.
\end{lem}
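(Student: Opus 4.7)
The plan is to build $\gt_u$ by pasting the minimal $p$-weak upper gradients $g_{u,V}$ over the pieces of a quasicovering, and then to verify both that the resulting function has the required property and that it is independent (a.e.) of the choices made.

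Fix a quasicovering $\B=\{V_j\}_{j=1}^\infty$ of $U$ witnessing $u\in\Dpqloc(U)$, so that $u\in\Dp(V_j)$ and a minimal $p$-weak upper gradient $g_{u,V_j}$ is defined on each $V_j$. The main technical point I would establish first is the compatibility
\[
  g_{u,V_j}=g_{u,V_k}\quad\text{a.e.\ in } V_j\cap V_k.
\]
To prove this I would apply Lemma \ref{lem-minimal-restrict} with $V_j$ playing the role of the ambient space. For this I must verify that $V_j\cap V_k$ is a measurable $p$-path open subset of $V_j$: measurability is immediate from Lemma \ref{lem-quasiopen-ppath-meas}, and the $p$-path openness of $V_k$ in $X$ transfers to $V_j\cap V_k$ in $V_j$ because any $p$-exceptional curve family in $V_j$ is automatically $p$-exceptional in $X$ (any admissible $\rho$ on $X$ restricts to an admissible $\rho$ on $V_j$ with no larger $L^p$-norm, so $\Modp$ in $V_j$ is dominated by $\Modp$ in $X$). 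Applied twice, once with $V_j$ as ambient and once with $V_k$ as ambient, Lemma \ref{lem-minimal-restrict} yields $g_{u,V_j}=g_{u,V_j\cap V_k}=g_{u,V_k}$ a.e.\ in $V_j\cap V_k$.

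With this compatibility in hand, existence is immediate: enumerate $\B$, set $\gt_u(x)=g_{u,V_j}(x)$ on the disjoint pieces $V_j\setm\bigcup_{i<j}V_i$, and $\gt_u(x)=0$ on the remainder of $U$, which by the quasicovering property and the observation that $\Cp(E)=0$ implies $\mu(E)=0$ has measure zero. The compatibility then gives $\gt_u=g_{u,V_k}$ a.e.\ in each $V_k$, so $\gt_u$ is a $p$-fine upper gradient.

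For uniqueness (interpreted in the a.e.\ sense the definition implicitly demands) I would take two $p$-fine upper gradients $\gt_u$ and $\gt_u'$ arising from quasicoverings $\B$ and $\B'$. On each intersection $V\cap V'$ with $V\in\B$ and $V'\in\B'$, the same argument as for compatibility gives $\gt_u=g_{u,V}=g_{u,V\cap V'}=g_{u,V'}=\gt_u'$ a.e. Since the union of all such $V\cap V'$ exhausts $U$ up to a set of capacity, and hence measure, zero, one concludes $\gt_u=\gt_u'$ a.e.\ in $U$. The main obstacle is really the opening step, namely transferring Lemma \ref{lem-minimal-restrict} from ambient $X$ to an ambient quasiopen set $V_j$; once the $p$-modulus comparison above is in place, the rest is bookkeeping.
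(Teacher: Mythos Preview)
Your approach is essentially the paper's: use Lemma~\ref{lem-minimal-restrict} (applied with $V_j$ as the ambient space) to obtain compatibility of the $g_{u,V_j}$ on overlaps, patch them together, and rerun the overlap argument across two quasicoverings for uniqueness. You are in fact more explicit than the paper about why Lemma~\ref{lem-minimal-restrict} transfers to the ambient space $V_j$.

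One slip to fix: from the inequality $\Modp^{V_j}(\Gamma)\le\Modp^X(\Gamma)$ (which you derive correctly) it follows that a curve family that is \p-exceptional in $X$ is \p-exceptional in $V_j$, not the reverse as you wrote. Fortunately this corrected direction is exactly what your argument needs: the exceptional family witnessing that $V_k$ is \p-path open in $X$ has $\Modp^X=0$, hence $\Modp^{V_j}=0$, so $V_j\cap V_k$ is \p-path open in $V_j$ and Lemma~\ref{lem-minimal-restrict} applies. With this phrasing corrected, your proof is complete and matches the paper's.
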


The uniqueness is up to a.e.
Moreover,  by Definition~\ref{def-fine-grad}, if $g:U \to [0,\infty]$ satisfies
$g=\gt_u$ a.e., then $g$ is also a \p-fine upper gradient of $u$.

\begin{proof}
Let $\B$ be a quasicovering of $U$ such that $u \in \Dp(V)$
for every $V \in \B$.
If $V, W \in \B$, then $V \cap W$ is quasiopen and
Lemma~\ref{lem-minimal-restrict} shows that
\[
     g_{u,V} = g_{u,V \cap W} = g_{u,W}
     \quad \text{a.e. in } V \cap W.
\]
We can therefore define $\gt_u:U \to [0,\infty]$ so that
$\gt_u=g_{u,V}$ a.e.\ in $V$ for every $V \in \B$.
By definition, $\gt_u$ is a \p-fine upper gradient of $u$.

To prove the uniqueness, assume that $g$ is any \p-fine upper gradient of $u$,
and let $\B$ and $\B'$ be the quasicoverings given
in Definition~\ref{def-fine-grad} for $\gt_u$ and $g$,
respectively.
Let $V \in \B$ and $W \in \B'$.
Since $V \cap W$ is quasiopen,
Lemma~\ref{lem-minimal-restrict} then yields that
\[
     \gt_u =g_{u,V}=g_{u,V \cap W}=g_{u,W}=g
          \quad \text{a.e.\ in } V \cap W.
\]
As $\B'$ and $\B$ are quasicoverings it follows that
$\gt_u=g$ a.e.\ in $V$, and thus in $U$.
This proves the a.e.\ uniqueness of $\gt_u$.
\end{proof}

Next, we show that \p-fine upper gradients are the same
as minimal \p-weak upper gradients, with minimality in
the appropriate sense.
Note that minimality has
been built into the definition of \p-fine upper gradients.

\begin{thm}\label{thm-minimality}
Let $u \in \Dpqloc(U)$ and let $\gt_u$ be a \p-fine upper gradient of
$u$.
Then $\gt_u \in \Lpqloc(U)$ and it is a \p-weak upper gradient of $u$ in $U$
 which is minimal in the following
two senses\/\textup{:}
\begin{enumerate}
\item \label{min-b}
If $W \subset U$ is quasiopen and $u \in \Dploc(W)$,
then $\gt_u = g_{u,W}$ a.e.\ in $W$.
\item \label{min-a}
If $g \in \Lpqloc(U)$ is a \p-weak upper gradient of $u$, then
$\gt_u \le g$ a.e.
\end{enumerate}
\end{thm}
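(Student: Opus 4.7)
The plan is first to observe that $\gt_u\in\Lpqloc(U)$ automatically: the quasicovering $\B=\{V_j\}_{j=1}^\infty$ witnessing $u\in\Dpqloc(U)$ simultaneously witnesses $\gt_u\in\Lpqloc(U)$, since $\gt_u=g_{u,V_j}\in L^p(V_j)$ for each $j$. The bulk of the work will be to show that $\gt_u$ is a $p$-weak upper gradient of $u$ in $U$; once this is in hand, both minimality statements should reduce to Lemma~\ref{lem-minimal-restrict} applied locally on the pieces of a quasicovering.

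To establish the $p$-weak upper gradient property, I would set $E=U\setminus\bigcup_j V_j$ and denote by $\Gamma_j$ the exceptional $\Modp$-null family associated with $g_{u,V_j}$ in $V_j$. I then let $\Gamma$ be the family of curves $\gamma:[0,l_\gamma]\to U$ for which at least one of the following holds: $\gamma$ meets $E$; $\gamma$ meets $\{x\in U:|u(x)|=\infty\}$; some $\gamma^{-1}(V_j)$ fails to be relatively open in $[0,l_\gamma]$; or some subcurve of $\gamma$ lies in $\Gamma_j$ for some $j$. The first three subfamilies are $\Modp$-null in $X$, hence in $U$, by Lemma~\ref{lem-Modp} combined with the standard capacity--modulus correspondence, by Proposition~\ref{prop-Dp=infty} applied in each $V_j$ (together with countable subadditivity of $\Cp$), and by the $p$-path openness of each $V_j$ (Lemma~\ref{lem-quasiopen-ppath-meas}) together with countable subadditivity of $\Modp$, respectively. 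For any $\gamma\notin\Gamma$, compactness of $[0,l_\gamma]$ turns the open cover $\{\gamma^{-1}(V_j)\}_j$ into a finite partition $0=t_0<\cdots<t_m=l_\gamma$ with $\gamma([t_{i-1},t_i])\subset V_{j_i}$. Summing
\[
|u(\gamma(t_{i-1}))-u(\gamma(t_i))|\le\int_{\gamma|_{[t_{i-1},t_i]}}g_{u,V_{j_i}}\,ds=\int_{\gamma|_{[t_{i-1},t_i]}}\gt_u\,ds
\]
over $i$ then yields the required inequality for $\gt_u$ along $\gamma$; the equality in the display uses $g_{u,V_{j_i}}=\gt_u$ a.e.\ in $V_{j_i}$, enlarging $\Gamma$ if necessary by another $\Modp$-null family so that path integrals are unaffected by a.e.\ modifications.

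The main obstacle will be controlling the fourth subfamily. My plan is: for each $j$ choose an admissible $0\le\rho_j\in L^p(V_j)$ with $\int_{\gamma'}\rho_j\,ds=\infty$ for every $\gamma'\in\Gamma_j$, extend $\rho_j$ by zero to $X$ (this preserves the $L^p$ norm, and Borel measurability can be arranged modulo a null set), and set $\rho=\sum_j c_j\rho_j$ with $c_j>0$ chosen so that $\rho\in L^p(X)$. Any curve in $U$ with a forbidden subcurve in some $V_{j_0}$ then satisfies $\int_\gamma\rho\,ds=\infty$, so the fourth subfamily is $\Modp$-null, as needed.

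For the minimality statements I would localize on the pieces of the relevant quasicoverings. For (b), let $\{W_k\}_{k=1}^\infty$ be a quasicovering witnessing $g\in\Lpqloc(U)$; on each quasiopen $V_j\cap W_k$, both $g$ and $\gt_u$ restrict to $p$-weak upper gradients of $u$, and Lemma~\ref{lem-minimal-restrict} applied with ambient set $V_j$ (where $u\in\Dp(V_j)$) gives $g_{u,V_j\cap W_k}=g_{u,V_j}=\gt_u$ a.e.\ in $V_j\cap W_k$; minimality of $g_{u,V_j\cap W_k}$ then forces $\gt_u\le g$ a.e.\ there, and varying $j$ and $k$ covers $U$ up to capacity zero. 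For (a), the analogous localization in $V_j\cap W$, combined with the definition of $g_{u,W}$ from $u\in\Dploc(W)$, gives $\gt_u=g_{u,V_j\cap W}=g_{u,W}$ a.e.\ in $V_j\cap W$, and $\{V_j\cap W\}_j$ is a quasicovering of $W$ because $W\setminus\bigcup_j V_j\subset E$ has capacity zero.
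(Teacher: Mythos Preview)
Your proof is correct and follows essentially the same route as the paper's: the same compactness-and-partition argument along \p-almost every curve (the paper packages your four exceptional subfamilies into citations of Proposition~1.48 and Lemma~1.40 in \cite{BBbook}, and your explicit $\rho=\sum_j c_j\rho_j$ is the content of the standard subcurve lemma), and the same localization via Lemma~\ref{lem-minimal-restrict} on intersections $V_j\cap W$ for minimality. The only cosmetic difference is that the paper derives \ref{min-a} directly from \ref{min-b} by observing that $g\in L^p(\Vt)$ forces $u\in\Dp(\Vt)\subset\Dploc(\Vt)$, so \ref{min-b} with $W=\Vt$ gives $\gt_u=g_{u,\Vt}\le g$ a.e.\ in one step, whereas you redo the double localization on $V_j\cap W_k$; also, your second exceptional subfamily (curves meeting $\{|u|=\infty\}$) is harmless but unnecessary, since the upper-gradient convention already handles infinite values at partition points.
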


\begin{proof}
Let $\B$ be the quasicovering of $U$ associated with
$\gt_u$ in Definition~\ref{def-fine-grad}.
It is immediate that $\gt_u \in \Lpqloc(U)$.

Next we show that $\gt_u$ is a \p-weak upper gradient of $u$ in $U$.
Since $\Cp(U\setm \bigcup_{V\in\B} V) =0$ and each $V$ is \p-path open (by
Lemma~\ref{lem-quasiopen-ppath-meas}), we conclude from
Proposition~1.48 in \cite{BBbook}
that \p-almost every curve $\ga$ in $U$ avoids
$U\setm \bigcup_{V\in\B} V$ and is such that  $\ga^{-1}(V)$ is open for each $V\in\B$.
Moreover, by~\cite[Lemma~1.40]{BBbook}, we can assume that
$\gt_u$ is an upper gradient of $u$
(i.e.\ satisfies \eqref{ug-cond})
on
any subcurve $\gat\subset\ga$, whose image is contained in $V$.
Let $\ga:[0,l_\ga]\to \bigcup_{V\in\B} V$ be such a curve.

Since each $\ga^{-1}(V)$ is open, it is a countable union of (relatively) open
subintervals of $[0,l_\ga]$, whose collection for all $V\in\B$
covers $[0,l_\ga]$.
By compactness, $[0,l_\ga]$ can be covered by finitely many
such intervals.
We can then find slightly smaller closed intervals so that
$[0,l_\ga]=\bigcup_{j=1}^N [a_j,b_j]$, where 
$[a_j,b_j]\subset\ga^{-1}(V_j)$ for some $V_j\in\B$.
Since this is a finite union of intervals we can, by decreasing their lengths,
assume that 
$0=a_1< b_1=a_2 < \ldots < b_N =l_\ga $.
We then have
\begin{align*}
|u(\ga(0)) - u(\ga(l_\ga))| &\le \sum_{j=1}^N |u(\ga(a_{j})) - u(\ga(b_{j}))|
\le \sum_{j=1}^N \int_{\ga|_{[a_{j},b_{j}]}} \gt_u\,ds = \int_\ga \gt_u\,ds.
\end{align*}
This shows that $\gt_u$ is a \p-weak upper gradient of $u$ in $U$.

We now turn to the minimality.

\ref{min-b}
If $V \in \B$, then $V \cap W$ is quasiopen. Hence by
Lemma~\ref{lem-minimal-restrict},
\[
     g_{u,W}=g_{u,V \cap W}=g_{u,V} = \gt_u
     \quad \text{a.e. in } V \cap W.
\]
As $\B$ is a quasicovering, we see that $g_{u,W}= \gt_u$ a.e.\ in $W$.

\ref{min-a}
Let $\Bt$ be a quasicovering such that $g \in L^p(\Vt)$ for each $\Vt \in \Bt$.
Then $u \in \Dp(\Vt)$ and by \ref{min-b},
\[
     \gt_u = g_{u,\Vt} \le g
          \quad \text{a.e. in } \Vt.
\]
As $\Bt$ is a quasicovering, it follows that $\gt_u \le g$ a.e.\ in $U$.
\end{proof}

\begin{thm}   \label{thm-Np-iff-gt-Lp}
A function $u$ belongs to $\Np(U)$
if and only if $u\in L^p(U)$
and there exists
a \p-fine upper gradient $\gt_u\in L^p(U)$.
Moreover, in this case, $\gt_u=g_{u,U}$ a.e.\ in $U$.
\end{thm}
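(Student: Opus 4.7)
The plan is to deduce both implications from the already-established Theorem~\ref{thm-minimality} (minimality of the $p$-fine upper gradient) together with Lemma~\ref{lem-fine-gr-upper-gr} (its existence), exploiting the fact that since $U$ is quasiopen, the one-element family $\{U\}$ is itself a quasicovering of $U$ by Lemma~\ref{lem-exist-quasicover}. This observation effectively collapses the quasicovering machinery to a single chart in the special case $W=U$ and makes the result almost a tautology given what has been proved so far.

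For the forward direction, assume $u\in\Np(U)$. Then $u\in L^p(U)$ by definition, and $u\in\Dp(U)$, so using the trivial quasicovering $\{U\}$ we obtain $u\in\Dpqloc(U)$. Lemma~\ref{lem-fine-gr-upper-gr} supplies a $p$-fine upper gradient $\gt_u$, and since $u\in\Dp(U)\subset\Dploc(U)$ (covering $U$ with itself), Theorem~\ref{thm-minimality}\,\ref{min-b} applied with $W=U$ gives $\gt_u=g_{u,U}$ a.e.\ in $U$. Because $g_{u,U}\in L^p(U)$, we conclude $\gt_u\in L^p(U)$, which simultaneously delivers the ``moreover'' clause.

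Conversely, suppose $u\in L^p(U)$ admits a $p$-fine upper gradient $\gt_u\in L^p(U)$. By Theorem~\ref{thm-minimality}, $\gt_u$ is a $p$-weak upper gradient of $u$ in $U$. Applying the Koskela--MacManus approximation recalled just after Definition~\ref{deff-ug} inside the metric space $U$, we obtain a sequence of honest (Borel) upper gradients $g_j$ of $u$ with $g_j\to\gt_u$ in $L^p(U)$, so
\[
\inf_{g}\int_U g^p\,d\mu\;\le\;\int_U \gt_u^p\,d\mu\;<\;\infty,
\]
the infimum being taken over upper gradients of $u$ in $U$. Combined with $u\in L^p(U)$ this shows $\|u\|_{\Np(U)}<\infty$, i.e.\ $u\in\Np(U)$. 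The a.e.\ equality $\gt_u=g_{u,U}$ is then again Theorem~\ref{thm-minimality}\,\ref{min-b} with $W=U$.

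I do not anticipate any real obstacle; the content of the statement is essentially the recognition that the (potentially distributed) information contained in an arbitrary quasicovering can, after the fact, be consolidated into the single-chart picture on $U$ itself. The only point requiring mild care is the distinction between upper gradients and $p$-weak upper gradients when verifying $u\in\Np(U)$ in the reverse direction, which is precisely bridged by Koskela--MacManus.
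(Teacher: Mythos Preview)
Your proof is correct and follows essentially the same route as the paper's own argument: both directions rest on Theorem~\ref{thm-minimality} (together with Lemma~\ref{lem-fine-gr-upper-gr}), applied with $W=U$. The only difference is cosmetic: in the converse direction the paper writes ``and thus $u\in\Np(U)$'' immediately after observing that $\gt_u$ is a $p$-weak upper gradient in $L^p(U)$, whereas you make the Koskela--MacManus approximation explicit to bridge the gap to Definition~\ref{deff-Np}.
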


Similar statements for $\Nploc$, $\Npploc$, $\Npqloc$, $\Dp$,
$\Dploc$, $\Dpploc$ and $\Dpqloc$ are also valid.

\begin{proof}
If $u \in\Np(U)$, then $u \in L^p(U)$ and the minimal
\p-weak upper gradient $g_{u,U} \in L^p(U)$.
As $u \in \Np(U) \subset \Dpqloc(U)$ it has a \p-fine upper gradient $\gt_u$,
by Lemma~\ref{lem-fine-gr-upper-gr}.
By Theorem~\ref{thm-minimality}, $\gt_u=g_{u,U}$ a.e.,
and thus $\gt_u \in L^p(U)$.

Conversely, if
 $u \in L^p(U)$ and $\gt_u \in L^p(U)$ is a \p-fine upper gradient  $u$,
then $\gt_u$ is a \p-weak upper gradient of $u$ in $U$,
by Theorem~\ref{thm-minimality},
and thus $u \in \Np(U)$.
\end{proof}

The following result shows that in the definition of \p-fine upper gradients,
$u$ can be extended from $V$ to a global Newtonian function.

\begin{prop}  \label{prop-ex-fine-u-g}
If $u \in \Dpqloc(U)$,
then there is a \p-strict quasicovering $\B$
of $U$ such that for every $V\in\B$ there exists
$u_V\in\Np(X)$ with $u=u_V$ in $V$.
\end{prop}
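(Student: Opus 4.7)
The strategy is to refine the given quasicovering of $U$ so that on each piece $u$ becomes bounded (and hence in $L^p$), and then to multiply $u$ by a compactly supported cutoff to produce a globally defined Newtonian function that agrees with $u$ on a \p-strict subset.

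I would start from the quasicovering $\{W_j\}_{j=1}^\infty$ of $U$ with $u\in \Dp(W_j)$ provided by $u\in\Dpqloc(U)$, and intersect each $W_j$ with a countable cover of $X$ by balls so that I may assume $W_j$ is bounded and $\mu(W_j)<\infty$. By Theorem~\ref{thm-Nploc}, $u$ is quasicontinuous on $U$ and finite q.e.\ there; extending $u$ by zero to $X\setm U$ preserves quasicontinuity (using that $U$ is quasiopen), so Proposition~\ref{prop-quasiopen-char} applied to $(m-u)_\limplus$ and $(u+m)_\limplus$ shows that $\{u<m\}$ and $\{u>-m\}$, and hence $\{|u|<m\}$, are quasiopen in $X$. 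Therefore the countable family $W_{j,m}:=W_j\cap\{|u|<m\}$ consists of bounded quasiopen sets of finite measure, and since $u$ is finite q.e.\ (Lemma~\ref{lem-finite-qe}), $\{W_{j,m}\}_{j,m}$ still quasicovers $U$. On each $W_{j,m}$ restriction yields $u\in \Dp(W_{j,m})$, and because $|u|<m$ on $W_{j,m}$ with $\mu(W_{j,m})<\infty$ I further get $u\in \Np(W_{j,m})$.

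Next I apply Proposition~\ref{prop-strict-quasicover} to each $W_{j,m}$ to obtain a \p-strict quasicovering $\B_{j,m}$ of $W_{j,m}$ consisting of finely open subsets $V\Subset W_{j,m}$ together with cutoffs $v_V\in\Np_0(W_{j,m})$ having compact support in $W_{j,m}$, $0\le v_V\le 1$ and $v_V=1$ on $V$. Since $\spt v_V\Subset W_{j,m}\subset U$, extending $v_V$ by zero places it in $\Np_0(U)$, so $\B:=\bigcup_{j,m}\B_{j,m}$ is a countable \p-strict quasicovering of $U$. For $V\in \B_{j,m}$ I define $u_V:=v_V u$ on $W_{j,m}$ and $u_V=0$ on $X\setm W_{j,m}$. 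Both $u|_{W_{j,m}}$ and $v_V$ are bounded elements of $\Np(W_{j,m})$, so Lemma~\ref{lem-product} gives $u_V\in\Np(W_{j,m})$. From $|u_V|\le m v_V\in\Np_0(W_{j,m})$ and Lemma~2.37 in \cite{BBbook} (exactly as in the proof of Theorem~\ref{thm-Nploc}), I conclude that $u_V\in\Np_0(W_{j,m})\subset \Np(X)$. Since $v_V=1$ on $V$, $u=u_V$ on $V$, as required.

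The hard part is the possible unboundedness and non-integrability of $u$ on the initial quasicovering pieces, which prevents a direct cutoff-multiplication argument. The crucial ingredient that unlocks the construction is the quasicontinuity of $u$ from Theorem~\ref{thm-Nploc}, which ensures that the level sets $\{|u|<m\}$ are themselves quasiopen and hence can serve as refining pieces; only after this refinement is the Leibniz-type product argument via Lemma~\ref{lem-product} available.
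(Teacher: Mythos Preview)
Your argument is essentially the paper's own proof: refine the initial quasicovering by the sublevel sets $\{|u|<m\}$ to make $u$ bounded, pass to a \p-strict quasicovering via Proposition~\ref{prop-strict-quasicover}, and then use the product Lemma~\ref{lem-product} together with Lemma~2.37 in~\cite{BBbook} to push the cut-off product into $\Np_0\subset\Np(X)$. The only real difference is cosmetic: you intersect with balls to force finite measure, whereas the paper instead works on the compact set $\spt v_V$ (where finite measure is automatic).

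There is, however, one incorrect step in your justification that $\{x\in U:|u(x)|<m\}$ is quasiopen. Extending $u$ by zero to $X\setm U$ does \emph{not} in general preserve quasicontinuity: already $u\equiv 1$ on $U=(0,1)\subset\R$ with $p>1$ gives $\chi_{(0,1)}$, which is not quasicontinuous on $\R$ since points have positive capacity. So your appeal to Proposition~\ref{prop-quasiopen-char} via the zero extension fails as written. The fix is immediate and avoids the extension entirely: given $\varepsilon>0$, choose open $G_1$ with $\Cp(G_1)<\varepsilon$ and $u|_{U\setm G_1}$ continuous, and open $G_2$ with $\Cp(G_2)<\varepsilon$ and $U\cup G_2$ open; then a direct check shows that $\{x\in U:|u(x)|<m\}\cup(G_1\cup G_2)$ is open, so the sublevel set is quasiopen. (The paper simply asserts this as a consequence of quasicontinuity.) With this correction your proof goes through.
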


The following result is an immediate corollary.

\begin{cor} \label{cor-Npqloc=Dpqloc}
$\Npqloc(U)=\Dpqloc(U)$.
\end{cor}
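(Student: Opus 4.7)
The plan is to refine the given quasicovering of $U$ twice and then, on each piece of the final refinement, exhibit a global Newtonian representative of $u$ obtained by multiplying a truncation of $u$ by a finely supported cutoff.

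First I would invoke the definition of $\Dpqloc(U)$ to fix a quasicovering $\{W_j\}$ of $U$ with $u\in\Dp(W_j)$ for every $j$. Applying Proposition~\ref{prop-strict-quasicover} to each quasiopen $W_j$ and combining the resulting finely open families produces a $p$-strict quasicovering $\{V_{j,k}\}$ of $U$ by finely open sets with $\overline{V_{j,k}}\Subset W_j\subset U$, together with cutoffs $\eta_{j,k}\in\Np_0(W_j)\subset\Np_0(U)$ that equal $1$ on $V_{j,k}$, take values in $[0,1]$ and have compact support in $W_j$.

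Next I would refine $\{V_{j,k}\}$ to control the size of $u$. Theorem~\ref{thm-Nploc} and Lemma~\ref{lem-finite-qe} guarantee that $u$ is finely continuous and finite at q.e.\ point of $U$. For each such good point $x\in V_{j,k}$ I pick an integer $m_x>|u(x)|$ and, combining fine continuity at $x$ with the fact that $V_{j,k}$ is itself a fine neighbourhood of $x$, a fine neighbourhood $V'_x\subset V_{j,k}$ on which $|u|<m_x$. Lemma~\ref{lem-finely-open-x} applied inside $V'_x$ then produces a finely open $W_x\ni x$ and a cutoff $v_x\in\Np_0(V'_x)$ with compact support in $V'_x$, $0\le v_x\le1$ and $v_x=1$ on $W_x$; in particular $W_x$ is a finely open $p$-strict subset of $U$ with $\overline{W_x}\Subset U$. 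The quasi-Lindel\"of principle (Theorem~\ref{thm-quasiLindelof}) extracts a countable subfamily $\B=\{W_{x_i}\}$ whose union covers each $V_{j,k}$, and therefore $U$, up to a set of capacity zero, so that $\B$ is the required $p$-strict quasicovering of $U$.

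Finally, for each $V=W_{x_i}\in\B$ I would set $u_V:=\tilde u\,v_{x_i}$, where $\tilde u=\max\{\min\{u,m_{x_i}\},-m_{x_i}\}$ is the truncation of $u$ on $W_j$ at levels $\pm m_{x_i}$. Both factors are bounded, $\tilde u\in\Dp(W_j)\cap L^\infty(W_j)$, and $v_{x_i}$ is compactly supported in $V'_{x_i}\subset W_j$, so a standard Leibniz-type upper gradient computation combined with Lemma~\ref{lem-product} on an open neighbourhood of $\supp v_{x_i}$ places $u_V$ in $\Np$ near that support. The domination $|u_V|\le m_{x_i}v_{x_i}\in\Np_0(V'_{x_i})$ together with Lemma~2.37 in~\cite{BBbook} then upgrades this to $u_V\in\Np_0(V'_{x_i})\subset\Np(X)$ after extension by zero. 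On $W_{x_i}\subset V'_{x_i}$ one has $v_{x_i}=1$ and $|u|<m_{x_i}$, whence $u_V=\tilde u=u$ on $V$ as required. The main obstacle is precisely the possible unboundedness (or infinite values) of $u$, and it is to overcome this that the second refinement — via fine continuity, Lemma~\ref{lem-finely-open-x} and the quasi-Lindel\"of principle — is indispensable; the construction of $u_V$ from a bounded piece is then just a Leibniz plus zero-extension manoeuvre.
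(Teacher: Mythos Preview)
Your argument is correct and in substance reproduces the paper's proof of Proposition~\ref{prop-ex-fine-u-g}, from which the corollary follows immediately. The overall strategy --- refine the given quasicovering so that $u$ is bounded on each piece, then multiply by a compactly supported $\Np_0$-cutoff and invoke Lemma~\ref{lem-product} together with Lemma~2.37 in~\cite{BBbook} to land in $\Np(X)$ --- is exactly what the paper does.

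The one notable difference is how boundedness is arranged. You localize pointwise via fine continuity and then pass to a countable family by a second appeal to the quasi-Lindel\"of principle, whereas the paper exploits the quasicontinuity of $u$ (also supplied by Theorem~\ref{thm-Nploc}) globally: the level sets $U_k=\{x\in U:|u(x)|<k\}$ are quasiopen and form a quasicovering of $U$, so intersecting them with the original quasicovering and applying Proposition~\ref{prop-strict-quasicover} once suffices. This makes your first refinement step (the $\{V_{j,k}\}$ and the unused cutoffs $\eta_{j,k}$) and the truncation $\tilde u$ unnecessary, since $u$ is already bounded on each $U_k\cap\Vt$. A small imprecision: there is in general no \emph{open} neighbourhood of $\spt v_{x_i}$ inside the merely quasiopen $W_j$; as in the paper, apply Lemma~\ref{lem-product} directly with $Y=\spt v_{x_i}$.
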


\begin{proof}[Proof of Proposition~\ref{prop-ex-fine-u-g}]
Let $\Bt$ be a quasicovering of $U$ such that $u \in \Dp(\Vt)$ for every
$\Vt \in \Bt$.
Theorem~\ref{thm-Nploc} shows that $u$ is quasicontinuous.
Let $U_k=\{x\in U:|u(x)|<k\}$, $k=1,2,\ldots$\,.
By quasicontinuity, each $U_k$ is quasiopen and $\{U_k\}_{k=1}^\infty$ forms
a quasicovering of $U$.
For each $k$, let $\B_k=\{U_k\cap\Vt:\Vt\in\Bt\}$ and for each $W\in\B_k$
use Proposition~\ref{prop-strict-quasicover} to obtain a
\p-strict
quasicovering $\B_W$ of $W$,
together with the associated compactly supported functions in $W$.
Then $\B=\bigcup_{k=1}^\infty \bigcup_{W\in \B_k} \B_W$ is a
\p-strict quasicovering of $U$,
and $u$ is bounded on each $W\in \B$.

Now, let $k$, $W=U_k\cap\Vt\in\B_k$ and $V\in\B_W$ be arbitrary,
and let $v\in\Np_0(W)$ be the associated function
such that $v=1$ in $V$.
Since $u\in \Dp(W)$ is bounded in $W$ and $v$ has compact support in $W$,
we see that $u\in\Np(\spt v)$.
Lemma~\ref{lem-product} implies that $uv \in \Np(\spt v)$, and as
$|uv|\le kv \in\Np_0(\spt v)$, Lemma~2.37
in~\cite{BBbook} then shows that $uv\in\Np_0(\spt v) \subset \Np(X)$.
Let $u_V=uv \in \Np(X)$ for $V \in \B$. Then $u=u_V$ in $V$.
\end{proof}

The following definition is from Kilpel\"ainen--Mal\'y~\cite{KiMa92},
and has been our inspiration for Definition~\ref{def-fine-grad}.
Note however, that the metric space theory allows us to consider $u\in\Dp(V)$
in Definition~\ref{def-fine-grad}, which makes the situation simpler.
In particular, we do not need to go outside of $U$ to define
\p-fine upper gradients.
On the other hand, Proposition~\ref{prop-ex-fine-u-g} shows that one can
equivalently use functions $u_V\in\Np(X)$ such that $u=u_V$ in $V$.
(In \cite{KiMa92} a quasicovering may be uncountable, but it is required
to contain a countable quasicovering in our sense.
For the purpose of the definition below the existence
of a quasicovering in either sense is obviously equivalent.)

\begin{deff}  \label{def-Wp}
Let $U\subset\R^n$ be quasiopen.
A function $u:U\to\R$ belongs to $\Wp(U)$ if

\begin{enumerate}
\renewcommand{\theenumi}{\textup{(\roman{enumi})}}%
\item
$u\in L^p(U)$\textup{;}
\item
there is a
quasicovering $\B$ of $U$ such that for every $V\in\B$
there is an open set $G_V\supset V$ and $u_V\in \Wp(G_V)$
so that $u=u_V$ in $V$\textup{;}
\item
the \emph{fine
gradient} $\grad u$, defined by $\grad u = \grad u_V$ a.e.\ on each $V\in\B$,
also belongs to $L^p(U)$.
\end{enumerate}

\vskip-\bigskipamount 

Moreover, we let
\[
    \|u\|_{\Wp(U)} = \biggl( \int_U (|u|^p + |{\grad u}|^p)\,dx\biggr)^{1/p}.
\]
\end{deff}

As $\B$ is a quasicovering, the gradient $\nabla u$ is well defined a.e.,
and we may pick any representative.
By Proposition~\ref{prop-ex-fine-u-g} and Theorem~\ref{thm-Wp=Np} below,
one can equivalently require that $G_V=\R^n$ in
Definition~\ref{def-Wp}.

Kilpel\"ainen--Mal\'y~\cite{KiMa92} gave this definition
for unweighted $\R^n$, but it makes sense also
in weighted $\R^n$ (with a \p-admissible weight),
provided that we by $\grad u_V$
mean the corresponding weighted Sobolev gradient,
cf.\ the discussion on p.\ 13 in
Heinonen--Kilpel\"ainen--Martio~\cite{HeKiMa}.
Theorem~\ref{thm-Wp=Np} below also holds, with the proof below, for
weighted $\R^n$.

If we change a function in $\Wp(U)$ on a set of measure zero it remains
in $\Wp(U)$, in contrast to the Newtonian case.
To characterize $\Wp(U)$ using
 Newtonian spaces we need to consider the space
\begin{equation} \label{eq-hNp}
        \hNp (U) = \{u : u=v \text{ a.e. for some } v \in \Np(U)\}.
\end{equation}
We also let $\|u\|_{\hNp(U)}=\|v\|_{\Np(U)}$, which is well defined
by Lemma~1.62 in  \cite{BBbook}.

\begin{thm} \label{thm-Wp=Np}
Let $U\subset\R^n$ be a quasiopen set. Then $\hNp(U)=\Wp(U)$
and $\|u\|_{\hNp(U)}= \|u\|_{\Wp(U)}$ for all $u \in \hNp(U)$.

Moreover, $g_{v,U}=|\nabla u|$ a.e.\ in $U$
for all $u \in \hNp(U)$ and $v$
as in \eqref{eq-hNp}.
\end{thm}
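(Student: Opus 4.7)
The plan is to route through the \p-fine upper gradient formalism of Definition~\ref{def-fine-grad}, which is the metric counterpart of the fine gradient in Definition~\ref{def-Wp}, together with the identification $g_w=|\nabla w|$ a.e.\ for $w\in\Np(\R^n)$ coming from the refined Sobolev theory recalled in Section~\ref{sect-prelim}. Once both spaces are compared through the intermediary $\gt_v$, the norm identity and the moreover statement will both fall out.

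For $\hNp(U)\subset\Wp(U)$: pick $v\in\Np(U)$ with $u=v$ a.e., and apply Proposition~\ref{prop-ex-fine-u-g} to produce a \p-strict quasicovering $\B$ of $U$ together with $u_V\in\Np(\R^n)$ satisfying $v=u_V$ on $V$ for each $V\in\B$. Since $\Np(\R^n)$ is the refined Sobolev space in $\R^n$, each $u_V$ may be used with $G_V=\R^n$ to verify the structural requirement in Definition~\ref{def-Wp}. To identify the fine gradient, combine $|\nabla u_V|=g_{u_V}$ a.e.\ with Lemma~\ref{lem-minimal-restrict} (so $g_{u_V,V}=g_{u_V}$ a.e.\ in $V$) and Theorem~\ref{thm-minimality} (so the \p-fine upper gradient $\gt_v$ satisfies $\gt_v=g_{v,V}=g_{v,U}$ a.e.\ in $V$). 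This gives $|\nabla u|=|\nabla u_V|=g_{v,U}$ a.e.\ in each $V$, hence a.e.\ in $U$, because $\B$ covers $U$ up to a set of capacity (and thus measure) zero. Consequently $u\in\Wp(U)$ and $\|u\|_{\Wp(U)}=\|v\|_{\Np(U)}=\|u\|_{\hNp(U)}$.

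For the converse $\Wp(U)\subset\hNp(U)$: start with $u\in\Wp(U)$, fix an associated quasicovering $\{V_j\}_j$ with $u_{V_j}\in\Wp(G_{V_j})$, and pass to quasicontinuous Newtonian representatives $w_j\in\Np(G_{V_j})$ with $w_j=u_{V_j}$ a.e.\ and $g_{w_j}=|\nabla u_{V_j}|$ a.e. The main obstacle is to glue the countably many $w_j$'s into a single everywhere defined Newtonian function $v$ on $U$ with $v=u$ a.e. For each pair $i<j$, the restrictions of $w_i$ and $w_j$ to the open set $G_{V_i}\cap G_{V_j}\supset V_i\cap V_j$ are quasicontinuous and equal a.e., hence equal q.e.\ by Shanmugalingam~\cite{Sh-rev}. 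Let $E$ be the union of these countably many capacity zero sets together with $U\setm\bigcup_j V_j$, which itself has capacity zero; then $\Cp(E)=0$ by countable subadditivity. Define $v(x)=w_{j(x)}(x)$ for $x\in U\setm E$, where $j(x)=\min\{j:x\in V_j\}$, and $v=0$ on $E$. Then $v=u$ a.e.\ on $U$ and $v=w_j$ q.e.\ on $V_j$, so $v\in\Np(V_j)$ with $g_{v,V_j}=g_{w_j}=|\nabla u|$ a.e. Hence $v\in\Npqloc(U)=\Dpqloc(U)$ (Corollary~\ref{cor-Npqloc=Dpqloc}) and its \p-fine upper gradient is $\gt_v=|\nabla u|\in L^p(U)$. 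Theorem~\ref{thm-Np-iff-gt-Lp} then lifts $v$ to $\Np(U)$ with $g_{v,U}=|\nabla u|$ a.e., which shows $u\in\hNp(U)$ and yields the reverse norm inequality. The moreover assertion is precisely the identity $g_{v,U}=|\nabla u|$ a.e.\ established in both directions.
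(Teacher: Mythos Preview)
Your approach is essentially the same as the paper's: use Proposition~\ref{prop-ex-fine-u-g} for $\hNp(U)\subset\Wp(U)$, and for the converse pass to quasicontinuous Newtonian representatives, glue them q.e.\ into a single function, and apply Theorem~\ref{thm-Np-iff-gt-Lp}.

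There is one imprecision in the gluing step that you should fix. You write that $w_i$ and $w_j$ are ``equal a.e.'' on the open set $G_{V_i}\cap G_{V_j}$, but this is not available: Definition~\ref{def-Wp} only gives $u=u_{V_i}$ on $V_i$ and $u=u_{V_j}$ on $V_j$, so all you know is $w_i=w_j$ a.e.\ on the \emph{quasiopen} set $V_i\cap V_j$. Shanmugalingam's Corollary~3.3 is then not the right tool either, since applied with $V_i\cap V_j$ as ambient space it would only yield equality q.e.\ with respect to the intrinsic capacity $\Cp^{V_i\cap V_j}$, which need not dominate the ambient $\Cp$. The correct reference (as in the paper) is Kilpel\"ainen~\cite{kilp98} (or \cite[Proposition~5.23]{BBbook}): two quasicontinuous functions that agree a.e.\ on a quasiopen set agree q.e.\ there with respect to the ambient capacity. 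With this, you get $w_i=w_j$ q.e.\ on $V_i\cap V_j$, which is exactly what your construction of $v$ needs. The rest of your argument then goes through unchanged.
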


By Remark~6.12 in \cite{BBbook} the Sobolev capacity considered
here is the same as in \cite{HeKiMa} and \cite{KiMa92}.

\begin{proof}
Let $u \in \hNp(U)$.
Then there is $v \in \Np(U)$ such that $v=u$ a.e.\ in $U$.
By Proposition~\ref{prop-ex-fine-u-g},
there is a quasicovering $\B$
of $U$ such that for every $V\in\B$ there exists
$v_V\in\Np(\R^n)$ with $v=v_V$ in $V$.
By Proposition~A.12 in \cite{BBbook}, $v_V \in \Wp(\R^n)$
and $g_{v_V}=|\nabla v_V|$ a.e.\ in $\R^n$.
Hence $v \in \Wp(U)$ and $\gt_v=|\nabla v|$ a.e.\ in $U$, where
$\gt_v$ and $\nabla v$ are the \p-fine
and fine gradients of $v$, respectively.
Theorem~\ref{thm-Np-iff-gt-Lp} implies that
$g_{v,U}=\gt_v=|\nabla v|$ a.e.\ in $U$.
As $u=v$ a.e., it follows directly that $u \in \Wp(U)$ and
$\nabla u=\nabla v$ a.e.
Thus,
\[
   \|u\|_{\hNp(U)}=\|v\|_{\Np(U)}=\|v\|_{\Wp(U)}=\|u\|_{\Wp(U)}.
\]

Conversely, let $u \in \Wp(U)$.
Then there is a quasicovering $\B$ with $u_V$ and $G_V$ for $V \in \B$,
as in Definition~\ref{def-Wp}.
By Theorem~4.4 in Heinonen--Kilpel\"ainen--Martio~\cite{HeKiMa},
for each $V \in \B$ there is a quasicontinuous function $\ut_V$ on
the open set $G_V$ such that $\ut_V=u_V$ a.e.\ in $G_V$.
By Proposition~A.13 in \cite{BBbook}, $\ut_V \in \Np(G_V)$.

If $V,W \in \B$, then $\ut_V=\ut_W$ q.e.\ in $G_V \cap G_W$,
by Kilpel\"ainen~\cite{kilp98} (or \cite[Proposition~5.23]{BBbook}).
Hence
we can find functions $\ut: U \to \eR$ and $g:U \to [0,\infty]$ such that
\[
  \ut=\ut_V \text{ q.e.\ in } V
  \quad \text{and} \quad
  g=g_{\ut_V,V} 
  \text{ a.e.\ in } V
\]
for every $V \in \B$.
By definition, $g$ is a \p-fine upper gradient of $\ut$.
Lemma~\ref{lem-minimal-restrict} above and
Proposition~A.12 in \cite{BBbook} yield that
\[
  g=g_{\ut_V,V}=g_{\ut_V,G_V}=|\nabla \ut_V|=|\nabla u_V|=|\nabla u|
  \quad \text{a.e.\ in } V
\]
for every $V \in \B$, and hence
$g=|\grad u|$ a.e.\ in $U$.
Since $|\grad u|\in L^p(U)$, Theorem~\ref{thm-Np-iff-gt-Lp}
implies that $\ut \in \Np(U)$.
As $u=\ut$ a.e.\ in $U$, we get that $u \in \hNp(U)$.
\end{proof}

The following gives a more precise description of which
representatives of $\Wp(U)$ belong to $\Np(U)$ (and not just to
$\hNp(U)$).
As it is valid in metric spaces we state it using $\hNp(U)$, but
in view of Theorem~\ref{thm-Wp=Np}
one can of course
replace $\hNp(U)$ by $\Wp(U)$ if $U \subset \R^n$.

\begin{thm} \label{thm-eqv-Np-qcont-ACCp}
Let $u \in \hNp(U)$.
Then the following are equivalent\/\textup{:}
\begin{enumerate}
\item \label{Wp-item-Np}
    $u \in \Np(U)$\textup{;}
\item \label{Wp-item-qc}
    $u$ is quasicontinuous\/\textup{;}
\item \label{Wp-item-ACCp}
    $u$ is absolutely continuous on \p-almost every curve in $U$.
\end{enumerate}
\end{thm}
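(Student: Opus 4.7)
The plan is to prove the cycle \ref{Wp-item-Np}$\Rightarrow$\ref{Wp-item-qc}$\Rightarrow$\ref{Wp-item-Np} together with \ref{Wp-item-Np}$\Rightarrow$\ref{Wp-item-ACCp}$\Rightarrow$\ref{Wp-item-Np}. The two implications out of \ref{Wp-item-Np} are essentially on the shelf: \ref{Wp-item-Np}$\Rightarrow$\ref{Wp-item-qc} is Theorem~\ref{thm-Nploc-intro}, while \ref{Wp-item-Np}$\Rightarrow$\ref{Wp-item-ACCp} is the standard consequence of having a \p-weak upper gradient in $L^p(U)$: the upper gradient inequality applied to subcurves of a given $\ga$, combined with absolute continuity of $s\mapsto\int_{\ga|_{[0,s]}} g\,ds$, forces $u\circ\ga$ to be absolutely continuous for \p-almost every $\ga$. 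Both converse implications start from the defining property of $\hNp(U)$, which provides a Newtonian representative $v\in\Np(U)$ with $u=v$ a.e.\ in $U$; the forward implications applied to this $v$ then supply its quasicontinuity and ACCp behaviour.

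For \ref{Wp-item-qc}$\Rightarrow$\ref{Wp-item-Np}, the key step is to promote the a.e.\ equality $u=v$ to q.e.\ equality. Since both $u$ and $v$ are quasicontinuous on $U$, a routine exhaustion argument (for any $\varepsilon>0$ pick an open $G\subset X$ with $\Cp(G)<\varepsilon$ so that both $u|_{U\setm G}$ and $v|_{U\setm G}$ are finite and continuous, then exploit the quasiopen/finely open characterisation of $U$ coming from Proposition~\ref{prop-quasiopen-char} to rule out discrepancies) gives $u=v$ q.e.\ in $U$. Because \p-almost every curve avoids any capacity-zero set, the minimal \p-weak upper gradient $g_{v,U}$ is then also a \p-weak upper gradient of $u$ in $U$, so $u\in\Np(U)$ with $\|u\|_{\Np(U)}=\|v\|_{\Np(U)}$.

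For \ref{Wp-item-ACCp}$\Rightarrow$\ref{Wp-item-Np}, the strategy is again to show that $g_{v,U}$ serves as a \p-weak upper gradient of $u$. Set $N=\{x\in U:u(x)\ne v(x)\}$, so that $\mu(N)=0$. For \p-almost every curve $\ga$ in $U$ three facts hold simultaneously: $v\circ\ga$ is absolutely continuous and satisfies the upper gradient inequality with $g_{v,U}$; $u\circ\ga$ is absolutely continuous by \ref{Wp-item-ACCp}; and $\ga^{-1}(N)$ has one-dimensional Lebesgue measure zero, which follows from the standard principle that a.e.-equal nonnegative measurable functions have equal curve integrals along \p-almost every curve (applied to $\chi_N$ and $0$). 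Two absolutely continuous functions on $[0,l_\ga]$ agreeing a.e.\ agree everywhere, so $u\circ\ga=v\circ\ga$ on all of $[0,l_\ga]$; at the endpoints this turns the upper gradient inequality for $v$ into one for $u$ with the same gradient $g_{v,U}$, and since $u\in L^p(U)$ (from $u=v$ a.e.) we obtain $u\in\Np(U)$. The main obstacle is exactly this last direction: organising the three \p-null exceptional curve families into a single \p-null family and deploying the correct form of the ``curve integrals respect a.e.\ equality'' principle; the other steps are routine once the Newtonian representative $v$ is in play.
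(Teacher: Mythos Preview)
Your proposal is correct and follows the same skeleton as the paper's proof: \ref{Wp-item-Np}$\Rightarrow$\ref{Wp-item-qc} via Theorem~\ref{thm-Nploc}, \ref{Wp-item-qc}$\Rightarrow$\ref{Wp-item-Np} by promoting the a.e.\ equality $u=v$ (with $v\in\Np(U)$) to q.e.\ equality, and \ref{Wp-item-Np}$\Leftrightarrow$\ref{Wp-item-ACCp} via the standard ACCp characterisation. The only difference is packaging: where you sketch arguments, the paper simply cites ready-made results---Kilpel\"ainen~\cite{kilp98} (or \cite[Proposition~5.23]{BBbook}) for the ``two quasicontinuous functions agreeing a.e.\ agree q.e.'' step, and \cite[Proposition~1.63]{BBbook} for the entire equivalence \ref{Wp-item-Np}$\Leftrightarrow$\ref{Wp-item-ACCp}. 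Your direct argument for \ref{Wp-item-ACCp}$\Rightarrow$\ref{Wp-item-Np} is exactly the content of that proposition, so nothing is lost; your description of \ref{Wp-item-qc}$\Rightarrow$\ref{Wp-item-Np} is a bit hand-wavy (the ``routine exhaustion argument'' is precisely Kilpel\"ainen's theorem and is cleanest left as a citation), but the underlying idea is the right one.
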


\begin{proof}
\ref{Wp-item-Np} \imp \ref{Wp-item-qc}
This follows from Theorem~\ref{thm-Nploc}.

\ref{Wp-item-qc} \imp \ref{Wp-item-Np}
As $u \in \hNp(U)$ there is $v \in \Np(U)$
such that $v=u$ a.e.\ in $U$.
By Theorem~\ref{thm-Nploc}, $v$ is quasicontinuous.
Hence by Kilpel\"ainen~\cite{kilp98}
(or \cite[Proposition~5.23]{BBbook})
$u=v$ q.e., and thus also $u \in \Np(U)$.
Here we also need to appeal to  Proposition~5.22 in \cite{BBbook},
which shows
that our capacity satisfies Kilpel\"ainen's axioms.

\ref{Wp-item-Np} \eqv \ref{Wp-item-ACCp}
This follows from Proposition~1.63 in \cite{BBbook}.
\end{proof}

\begin{proof}[Proof of Theorem~\ref{thm-intro-equiv-Sob}]
This follows directly from Theorems~\ref{thm-Wp=Np} and~\ref{thm-eqv-Np-qcont-ACCp}.
\end{proof}

Let us end by stating the following question.

\begin{openprob} \label{openprob-p-loc}
Let $V \subset \R^n$ be finely open.
Is then
$u \in \Npqloc(V)$ if and only if $u$ is quasicontinuous and
$u \in W^{1,p}_{\textup{\p-loc}}(V)$, as defined by Mal\'y--Ziemer~\cite[p.~149]{MZ}?
\end{openprob}

\end{document}